\numberwithin{equation}{section}
\newtheorem{theorem}{Theorem}[section]
\newtheorem{lemma}[theorem]{Lemma}
\newtheorem{definition}[theorem]{Definition}
\newtheorem{proposition}[theorem]{Proposition}
\newtheorem{remark}{Remark}
\def\vint_#1{\mathchoice%
          {\mathop{\kern 0.2em\vrule width 0.6em height 0.69678ex depth -0.58065ex
                  \kern -0.8em \intop}\nolimits_{\kern -0.4em#1}}%
          {\mathop{\kern 0.1em\vrule width 0.5em height 0.69678ex depth -0.60387ex
                  \kern -0.6em \intop}\nolimits_{#1}}%
          {\mathop{\kern 0.1em\vrule width 0.5em height 0.69678ex
              depth -0.60387ex
                  \kern -0.6em \intop}\nolimits_{#1}}%
          {\mathop{\kern 0.1em\vrule width 0.5em height 0.69678ex depth -0.60387ex
                  \kern -0.6em \intop}\nolimits_{#1}}}
                  \newcommand{\aveint}[2]{\mathchoice%
          {\mathop{\kern 0.2em\vrule width 0.6em height 0.69678ex depth -0.58065ex
                  \kern -0.8em \intop}\nolimits_{\kern -0.45em#1}^{#2}}%
          {\mathop{\kern 0.1em\vrule width 0.5em height 0.69678ex depth -0.60387ex
                  \kern -0.6em \intop}\nolimits_{#1}^{#2}}%
          {\mathop{\kern 0.1em\vrule width 0.5em height 0.69678ex depth -0.60387ex
                  \kern -0.6em \intop}\nolimits_{#1}^{#2}}%
          {\mathop{\kern 0.1em\vrule width 0.5em height 0.69678ex depth -0.60387ex
                  \kern -0.6em \intop}\nolimits_{#1}^{#2}}}
\begin{document}

\title{\bf A local/nonlocal diffusion model}

\author{Bruna C. dos Santos, Sergio M. Oliva and Julio D. Rossi}

\maketitle

\begin{abstract} 
In this paper, we study some qualitative properties for an evolution problem that combines local and nonlocal diffusion operators acting in two different subdomains and, coupled in such a way that, the resulting evolution problem is the gradient flow of an energy functional. 
The coupling takes place at the interface between the regions in which the different diffusions take place. 
We prove existence and uniqueness results, as well as, that the model preserves the total mass of the initial condition. 
We also study the asymptotic behavior of the solutions. Finally, we show a suitable way to recover the heat equation at the whole domain from taking the limit at the nonlocal rescaled kernel.
\end{abstract}

\noindent{\makebox[1in]\hrulefill}\newline2010 \textit{Mathematics Subject
Classification.} 
35K55, 
35B40, 
35A05. 
\newline\textit{Keywords and phrases.}  Nonlocal diffusion, heat equation, asymptotic behavior.


\section{Introduction and main results}

In this paper we combine a local diffusion equation, the classical heat equation,
\begin{equation} \label{ec.calor}
\frac{\partial u}{\partial t} (x,t) = \frac{\partial^2 u}{\partial x^2} (x,t)
\end{equation}
with a nonlocal diffusion equation with an integrable kernel
\begin{equation} \label{ec.no-local}
\frac{\partial u}{\partial t} (x,t) = \int J(x-y) (u(y,t) -u(x,t) ) dy.
\end{equation}
The kernel $J(z)$ is assumed to be nonnegative, continuous, symmetric, compactly supported with
$supp (J) =[-R,R]$ and
$\int J(x,y)\, dy  =1$ (these hypotheses on $J$ will be assumed from now on)

We aim to obtain a model that couples the local heat equation \eqref{ec.calor} with 
the nonlocal problem \eqref{ec.no-local} in such a way that the following features (that are the usual ones when one 
deals with a diffusion problem) hold:
\begin{itemize}
\item The problem is well-posed in the sense that there are existence and uniqueness of solutions. Besides, a comparison 
principle holds.

\item There is an energy functional such that the evolution problem can be view 
as the gradient flow associated with this energy.

\item The total mass of the initial condition is preserved along with the evolution.

\item Solutions converge exponentially fast to the mean value of the initial condition. 
\end{itemize}

Let us describe our model in terms of a particle system. To simplify the exposition we 
will restrict ourselves to a one-dimensional problem and comment on the
extension to higher dimensions at the end of the paper.
We split the domain $\Omega = (-1,1)$ into two subdomains $(-1,0)$ and $(0,1)$
 (to simplify 
we will restrict ourselves to this simple configuration). In $(-1,0)$ particles move by Brownian
motion (this gives the equation $\frac{\partial u}{\partial t} (x,t) = \frac{\partial^2 u}{\partial x^2} (x,t)$, $x\in (-1,0)$) with a reflexion at $x=-1$
(then $\frac{\partial u}{\partial x} (-1,0) =0$) and when the particle arrives to $x=0$ it passes trough to the other subdomain, $(0,1)$
(this will give a flux boundary condition at $x=0$). On the other hand, in $(0,1)$ particles obey a pure jump process
with jumping probability given by $J(x-y)$ (this gives an equation of the form \eqref{ec.no-local} in $(0,1)$, when a 
particle that is at $x\in (0,1)$ wants to jump to a location $y\in (-1,0)$ it enters the domain $(-1,0)$ at the point
$x=0$ (particles are stuck there, giving the counterpart to the flux coming  from $(-1,0)$).
This process has a density $w(x,t)$, which obeys an evolution equation associated with the gradient flow
of a local/nonlocal energy that we describe in the next section.

For different couplings between local and nonlocal models, we refer to \cite{delia2,delia3,Du,Gal,GQR,Kri} and references therein.
In \cite{delia2}, local and nonlocal problems are coupled trough a prescribed region in which
both kinds of equations overlap (the value of the solution in the nonlocal part of the domain is used as a Dirichlet boundary condition for the local part and vice-versa). This kind of coupling gives continuity of the solution in the overlapping region but does not preserve the total mass.
In \cite{delia2} and \cite{Du}, numerical schemes using local and nonlocal equations were developed and used to improve the computational
accuracy when approximating a purely nonlocal problem. In \cite{GQR} (see also \cite{Gal,Kri}), an energy closely related to ours was studied, but the gradient flow
of this energy (that it has all the nice properties listed above) gives an equation in the local region in which the coupling with
the nonlocal part appears as an external source in the heat equation (that is complemented with zero flux boundary conditions in the whole
boundary of the local region). In 
probabilistic terms, in the model described in  \cite{GQR}, particles may jump across the interface between the two regions but can not pass coming from the local side unless they jump.

\subsection{A local/nonlocal diffusion model}
As we mentioned, let us consider as the reference domain $\Omega=(-1,1) \subset \mathbb{R}$ that is divided in two
disjoint regions, the intervals $\Omega_{l}=(-1,0)$ and $\Omega_{nl}=(0,1)$, the local and nonlocal domains, respectively. 
We split a function $w\in L^2 (-1,1)$ as $w =u+v$, with $u =w \chi_{(-1,0)}$
and $v = w \chi_{(0,1)}$. For any $$w=(u,v) \in \mathcal{B}:=\left\{w \in L^2(-1,1): u \in H^1(-1,0),  v \in L^2(0,1) \right\}$$ we define the energy
\begin{equation} \label{energy}
\begin{array}{l}
    E(u,v)  \displaystyle:=\frac{1}{2}\int_{-1}^{0} \Big|\frac{\partial u}{\partial x}\Big|^2 dx + \frac{C_{J,1}}{4}\int_{0}^{1}\int_{0}^{1}J(x-y)\left(v(y)-v(x)\right)^2 dydx \\[10pt]
    \displaystyle \quad \qquad \qquad + \frac{C_{J,2}}{2}\int_{0}^{1}\int_{-1}^{0}J(x-y)dy\left(v(x)-u(0)\right)^2 dy dx,
\end{array}
\end{equation}
where $C_{J,1}$ and $C_{J,2}$ are fixed positive constants. Notice that in this energy functional we have two terms
$$
\frac{1}{2}\int_{-1}^{0}\Big|\frac{\partial u}{\partial x}\Big|^2 dx \qquad \mbox{and} \qquad \frac{C_{J,1}}{4}\int_{0}^{1}\int_{0}^{1}J(x-y)\left(v(y)-v(x)\right)^2 dydx
$$
that are naturally associated with the equations \eqref{ec.calor} and \eqref{ec.no-local}, plus a coupling term 
$$
\frac{C_{J,2}}{2}\int_{0}^{1}\int_{-1}^{0}J(x-y)dy\left(v(x)-u(0)\right)^2 dy dx
$$
that involves only the value of $u$ only at $x=0$.

Our aim is to follow the gradient flow associated with this energy, that is,
$(u,v)$ will be the solution of the the abstract $ODE$ problem $$(u,v)'(t)=-\partial E\left[(u,v)(t)\right], \qquad t\geq 0,$$ 
with $u(0)=u_{0}$, $v(0) =v_0$and, where $\partial E\left[(u,v)\right]$ denotes the subdifferential of $E$ at the point $(u,v)$. 
Let us compute the derivative of $E$ at $(u,v)$, in the direction of $\varphi \in C_{0}^{\infty}(-1,1)$ that is given by
\begin{align*}
 \displaystyle 
 \partial_{\varphi}E(u,v)  & \displaystyle =\lim_{h \to 0}\frac{E(u+h\varphi,v+h\varphi)-E(u,v)}{h} \\[10pt]
 \displaystyle  & = \int_{0}^{-1} \frac{\partial u}{\partial x} \frac{\partial \varphi}{\partial x} dx + \frac{C_{J,1}}{2}\int_{0}^{1}\int_{0}^{1}J(x-y)(v(y)-v(x))(\varphi(y)-\varphi(x))dydx  \\[10pt]
 \displaystyle  & \qquad + \frac{C_{J,2}}{2}\int_{0}^{1}\int_{-1}^{0}J(x-y)(v(x)-u(0))(\varphi(x)-\varphi(0))dydx.
\end{align*}
Thus, if $u$ is smooth, we would have
\begin{align*}
 \displaystyle 
& \partial_{\varphi}E(u,v) = \left\{\frac{\partial u}{\partial x}(0)- C_{J,2} \int_{0}^{1}\int_{-1}^{0}J(x-y)(v(x)-u(0))dydx \right\}\varphi(0) - \frac{\partial u}{\partial x}
(-1)\varphi(-1) \\[10pt]
\displaystyle &  \qquad\qquad\qquad - \int_{-1}^{0} \frac{\partial^2 u}{\partial x^2} \varphi dx - C_{J,1} \int_{0}^{1} \left\{ \int_{0}^{1}J(x-y)(v(y)-v(x))dy \right\} \varphi(x) dx 
\\[10pt]
\displaystyle &  \qquad\qquad\qquad + C_{J,2}\left\{ \int_{-1}^{0}J(x-y)(v(x)-u(0)) dy \right\} \varphi(x) dx . 
\end{align*}

Since $\langle \partial E[u,v],\varphi \rangle=\partial_{\varphi}E(u,v)$, we can derive the local/nonlocal problem associated to this gradient flow. The evolution problem consists of two parts. A local part, composed of a heat equation with Neumann/Robin type boundary conditions,
\begin{align}\label{local}
\begin{cases}
 \displaystyle   \frac{\partial u}{\partial t}(x,t)  =  \frac{\partial^{2}u }{\partial x^{2}}(x,t),  \\[10pt]
 \displaystyle   \frac{\partial u}{\partial x}(-1,t)  =  0,  \\[10pt]
 \displaystyle    \frac{\partial u}{\partial x}(0,t)  = C_{J,2}\int_{-1}^{0}\int_{0}^{1}J(x-y)( v(y,t)-  u(0,t))dydx,  \\[10pt] 
 \displaystyle    u(x,0)  =u_{0}(x),
    \end{cases}
    \end{align}
    for $ x \in (-1,0)$, $t>0$. Notice that we have a Robin type boundary condition at $x=0$ that encodes the coupling with the nonlocal part of the 
    problem.

For the nonlocal domain we have,
\begin{align}\label{nonlocal}
\begin{cases}
 \displaystyle \frac{\partial v}{\partial t}(x,t)  =C_{J,1}\int_{0}^{1} J(x-y)\left(v(y,t)-v(x,t) \right)dy 
   - C_{J,2}\int_{-1}^{0}J(x-y)  dy  ( v(x,t)-  u(0,t)),  \\[10pt]
     v(x,0)  =v_{0}(x), 
      \end{cases}
     \end{align} 
     for $ x \in (0,1)$, $t>0$. 
    Here we have a nonlocal diffusion problem for $v$, where the coupling with the local part $u$ appears as a source term in the equation, while the value of $u$ appears only at the interface $x=0$.
 
 The complete problem can be summarized as follows: we look for $w$ defined by
\begin{align}\label{complete}
w(x,t) = 
\begin{cases}
   u(x,t), \quad \text{if} \quad x \in (-1,0), \\
    v(x,t), \quad \text{if} \quad x \in (0,1), 
    \end{cases}
    \end{align}
where $(u,v)$ is a solution to \eqref{local}--\eqref{nonlocal}.

For this problem we have the following result:
\begin{theorem}
Given $w_{0} = (u_0,v_0) \in L^2(-1,1)$, there exists an unique mild solution 
$$w(\cdot,t) \in \mathcal{B}:=\left\{w \in L^2(-1,1): u \in H^1(-1,0),  v \in L^2(0,1) \right\}$$ 
to the local/nonlocal problem \eqref{complete} with $(u,v)$ satisfying \eqref{local}--\eqref{nonlocal} that is globally defined. 
If $w_{0} = (u_0,v_0)$ with $u_0 \in C([-1,0])$ and $v_0 \in C([0,1])$ then the solution $(u,v)$ is such that
$u(\cdot,t) \in C([-1,0])$ and $v (\cdot,t) \in C([0,1])$ for every $t>0$.

A comparison principle holds: if the initial data are ordered, $w_{0} \geq z_{0}$, then the corresponding solutions 
are also ordered, they verify $w \geq z$ in $(-1,1)\times \mathbb{R_{+}}$. 

Moreover, the total mass of the solution is preserved along the evolution, that is, 
$$\int_{-1}^1w(x,t) dx =\int_{-1}^1 w_{0}(x) dx = \int_{-1}^0 u_{0}(x) dx+ \int_{0}^1 v_{0}(x) dx.$$
\end{theorem}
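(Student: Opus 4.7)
The plan is to view the system \eqref{local}--\eqref{nonlocal} as the gradient flow of $E$ on the Hilbert space $H=L^2(-1,1)$ and apply the Brezis--Komura theory. First I would extend $E$ to all of $H$ by setting $E(w)=+\infty$ outside $\mathcal{B}$. The three terms of $E$ are quadratic forms in linear expressions of $(u,v)$, so $E$ is convex; lower semicontinuity in $L^2$ follows from the standard weak lower semicontinuity of the Dirichlet integral (which also controls the trace $u(0)$ via the continuity of $H^1(-1,0)\hookrightarrow C([-1,0])$) together with Fatou's lemma applied to the two nonnegative integrands in the nonlocal and coupling terms. Hence $\partial E$ is maximal monotone and Brezis--Komura produces, for every $w_0\in L^2(-1,1)$, a unique globally defined mild solution $w\in C([0,\infty);L^2)$ of $w'(t)\in -\partial E(w(t))$. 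The formal subdifferential computation already carried out in the introduction identifies this abstract evolution with the system \eqref{local}--\eqref{nonlocal}.

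For the continuity statement I would pass to a mild Duhamel form. Let $S_l(t)$ be the heat semigroup on $[-1,0]$ with homogeneous Neumann boundary conditions at both endpoints, together with a boundary lifting that absorbs the inhomogeneous flux prescribed at $x=0$; let $S_{nl}(t)$ be the $C([0,1])$ semigroup generated by the bounded linear operator $v\mapsto C_{J,1}\int_0^1 J(\cdot-y)(v(y)-v(\cdot))\,dy -C_{J,2}\bigl(\int_{-1}^0 J(\cdot-y)\,dy\bigr)v(\cdot)$. Define a map $\Phi$ on $C([0,T];C[-1,0])\times C([0,T];C[0,1])$ by plugging the previous iterate into the coupling data (the trace $u(0,t)$ appearing in the nonlocal source, and the double integral of $J(x-y)v(y,t)$ entering the local Robin flux). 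For $T$ small enough $\Phi$ is a contraction in the sup-in-time $C_x$ norm, giving a continuous fixed point; a standard continuation argument (the equations are linear with an a priori $L^\infty$ bound from the comparison principle below) extends the solution up to any finite time.

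For the comparison principle, linearity reduces matters to showing that if $w_0\geq 0$ then $w(\cdot,t)\geq 0$. I would multiply the local equation by $-u^{-}$, integrate over $(-1,0)$, and use that $\partial_x u^{-}\cdot \partial_x u=-|\partial_x u^{-}|^2$ on $\{u<0\}$ together with the Neumann condition at $x=-1$; the boundary term at $x=0$ is controlled using the Robin identity. Similarly, multiplying the nonlocal equation by $-v^{-}$ and using the pointwise bound $(v(x)-v(y))(v^{-}(y)-v^{-}(x))\leq -(v^{-}(x)-v^{-}(y))^2$ handles the diffusion term, while the coupling contributes a term involving $u(0)$. Adding the two identities, the cross contributions at the interface combine into a nonpositive coupling term, yielding $\frac{d}{dt}\bigl(\|u^{-}\|_{L^2}^2+\|v^{-}\|_{L^2}^2\bigr)\leq 0$ and hence $w\geq 0$. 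A first-proof-for-smooth-solutions/then-density argument justifies the formal testing for mild solutions.

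Finally, mass conservation is obtained by direct integration. For the local part, $\frac{d}{dt}\int_{-1}^0 u\,dx=u_x(0,t)-u_x(-1,t)=u_x(0,t)=C_{J,2}\int_{-1}^0\!\int_0^1 J(x-y)(v(y,t)-u(0,t))\,dy\,dx$. For the nonlocal part, symmetry of $J$ kills $\int_0^1\!\int_0^1 J(x-y)(v(y)-v(x))\,dy\,dx$, leaving $-C_{J,2}\int_0^1\!\int_{-1}^0 J(x-y)\,dy\,(v(x,t)-u(0,t))\,dx$; swapping $x\leftrightarrow y$ and invoking $J(x-y)=J(y-x)$ shows the two contributions cancel exactly, so $\frac{d}{dt}\int_{-1}^1 w\,dx=0$. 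The main obstacle I anticipate is the continuity preservation: the Robin condition at $x=0$ couples a point value of $u$ with a nonlocal integral of $v$, and the abstract Brezis--Komura solution lives only in $L^2$, so propagating $C$-regularity requires careful Green's function estimates for the heat equation with nonhomogeneous Robin flux and a uniform-in-$t$ control of the coupling coefficients; the comparison and mass identities, by contrast, are routine once existence is in hand.
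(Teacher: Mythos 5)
Your overall architecture is sound and in fact mirrors the paper, which gives \emph{two} constructions: a fixed-point/contraction argument in Section~\ref{sect-proofs} and an $m$-accretive/semigroup construction in Section~4 (completely accretive operator plus the range condition, verified by minimizing $\tfrac12\|u\|_{L^2}^2+E(u)-\int fu$). Your Brezis--Komura gradient-flow route is essentially the latter: convexity and lower semicontinuity of $E$ (with the trace $u(0)$ handled through the embedding $H^1(-1,0)\hookrightarrow C([-1,0])$) are exactly what make $B_J$ maximal monotone, so this part is a legitimate and arguably cleaner path to global existence and uniqueness of a mild $L^2$ solution. Your comparison argument (test with the negative parts and use $(a-b)(a^--b^-)\leq -(a^--b^-)^2$ so that everything collapses to $-2E(w_-,z_-)\leq 0$) is the same computation the paper performs for sub/supersolutions, and your mass-conservation computation is identical to the paper's (symmetry of $J$ plus Fubini makes the interface flux and the coupling source cancel exactly).

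The genuine gap is in the continuity/contraction step. Asserting that your map $\Phi$ is a contraction on $C([0,T];C[-1,0])\times C([0,T];C[0,1])$ for small $T$ presupposes precisely the estimate
$\|u_1-u_2\|_{l}\leq C_2\|v_1-v_2\|_{nl}$
for the heat equation on $(-1,0)$ with homogeneous Neumann data at $x=-1$ and an inhomogeneous Robin-type flux at $x=0$ depending on $v$ through $\int_{-1}^0\!\int_0^1 J(x-y)v(y)\,dy\,dx$. This sup-norm continuous dependence on boundary-flux data is the most delicate point of the whole existence proof, and the paper devotes an entire lemma to it, constructing an explicit self-similar barrier $\overline{w}(x,t)=(T+t)^{1/2}g\bigl(x/(T+t)^{1/2}\bigr)$ with a carefully tuned profile $g(\xi)=a^{-1}f(a\xi)$ and then invoking comparison. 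Your proposal names the difficulty (``careful Green's function estimates'') but does not supply the estimate, nor the constant's dependence on $T$ that the composed contraction $H_2\circ H_1$ needs. A single-layer-potential bound of the form $C\sqrt{T}$ would indeed work here, but it must be proved (or replaced by the paper's barrier argument); without it, neither the contraction nor the propagation of $C([-1,0])$-regularity of $u$ is established. Everything else in your plan I would accept as routine once this estimate is in hand.
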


\begin{remark}{\rm 
Notice that we prove that for a continuous initial condition we obtain a solution
 $(u,v)$ such that
$u(\cdot,t) \in C([-1,0])$ and $v (\cdot,t) \in C([0,1])$ for every $t>0$, but we are not
imposing (nor obtaining) continuity across the interface, that is, we don't necessarily 
have $u(0,t)= v(0,t)$.}
\end{remark}

\subsection{Asymptotic behavior}
Once we proved the existence and uniqueness of a global solution, our next goal is to look for its asymptotic behavior as $t \to \infty$. 
We start by observing that the constants, $w(x,t) \equiv cte$, are stationary solutions of \eqref{local}--\eqref{nonlocal}.

For the heat equation
\begin{equation}
  \frac{\partial u}{\partial t} = \frac{\partial^2 u}{\partial x^2},  
\end{equation} 
with Neumann boundary conditions, it is well known that solutions
have an exponential time decay to the mean value of the initial condition, that is,
$$
\left\| u(\cdot,t) - \fint u_0  \right\|_{L^2} \leq C (u_0) e^{-\beta t}.
$$
The same is valid (with a different $\beta$) for solutions to the nonlocal heat equation 
\begin{equation}
     \frac{\partial v}{\partial t} (x,t)=\int_{0}^1 J(x-y)(v(y,t)-v(x,t))dy,
\end{equation}
with the additional assumption on the kernel,
$
    M(J):=\int_{\mathbb{R}}J(z)|z|^{2}dz<\infty,
$
see \cite{andreu2010nonlocal,ChChRo}.

Here, we will show that solutions to our problem \eqref{complete} have the same behavior, that is, 
the solution of the coupled local-nonlocal problem converges exponentially to mean value of the initial condition.

\begin{theorem} \label{teo.asymp.intro}
Given $w_{0} \in L^2(-1,1)$, the solution to \eqref{complete} with initial condition $w_0$ 
converges to its mean value as $t \to \infty$ with an exponential rate.
\begin{equation}
    \left\| w(\cdot ,t) - \fint w_0 \right\|_{L^2(-1,1)} \leq C e^{-\beta_{1} t}, \qquad t>0,
\end{equation}
where $\beta_{1}>0$ depends only on $J$ and $\Omega$ and, the constant $C$ depends on the initial condition $w_{0}$.

\end{theorem}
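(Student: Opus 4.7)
The plan is to exploit the gradient-flow structure: differentiate $\|w(\cdot,t)-\fint w_0\|_{L^2}^2$ in time, identify the result with (minus twice) the energy evaluated at the centered solution, and close the estimate with a Poincar\'e-type inequality adapted to $E$.

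\medskip

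\noindent\textbf{Step 1 (setup and mass conservation).} Let $\bar w := \fint_{-1}^{1} w_0$. By the mass-preservation property proved in the previous theorem, $\fint_{-1}^1 w(\cdot,t) = \bar w$ for all $t>0$. Since constants are stationary solutions and the equation is linear, $\tilde w(\cdot,t):=w(\cdot,t)-\bar w$ also solves \eqref{local}--\eqref{nonlocal}, now with $\int_{-1}^1 \tilde w(\cdot,t)\,dx=0$.

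\medskip

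\noindent\textbf{Step 2 (energy dissipation identity).} The energy $E$ is a quadratic form $E(w)=\tfrac12 B(w,w)$ on $\mathcal B$, and the derivative formula already computed in the excerpt gives $\partial_\varphi E(w)=B(w,\varphi)$. Taking $\varphi$ equal to the constant $\bar w$, each of the three integrals defining $\partial_\varphi E$ vanishes (only derivatives, differences on $(0,1)^2$, and differences $\varphi(x)-\varphi(0)$ appear). Hence $B(w,\bar w)=0$, so $E(\tilde w)=E(w)$ and $\partial_{\tilde w}E(w)=2E(\tilde w)$. Since $w'(t)=-\partial E[w(t)]$,
\begin{equation}
\frac{d}{dt}\frac12\|\tilde w(\cdot,t)\|_{L^2(-1,1)}^2
=\langle w'(t),\tilde w(t)\rangle
=-\partial_{\tilde w}E(w)
=-2E(\tilde w(\cdot,t)).
\end{equation}

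\medskip

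\noindent\textbf{Step 3 (a coupled Poincar\'e inequality).} The crucial step is to prove that there exists $\lambda>0$, depending only on $J$ and $\Omega$, such that
\begin{equation}
E(u,v)\ \ge\ \lambda\,\bigl(\|u\|_{L^2(-1,0)}^2+\|v\|_{L^2(0,1)}^2\bigr)
\end{equation}
for every $(u,v)\in\mathcal B$ with $\int_{-1}^0 u+\int_0^1 v=0$. I would argue by contradiction and compactness. Assume a normalised sequence $(u_n,v_n)$ with $E(u_n,v_n)\to 0$. From the first term, $u_n$ is bounded in $H^1(-1,0)$, hence (up to a subsequence) $u_n\to c$ strongly in $L^2$ and in trace at $x=0$, with $c$ a constant. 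From the coupling term, $\int_0^1\bigl(\int_{-1}^0 J(x-y)\,dy\bigr)(v_n(x)-u_n(0))^2dx\to 0$, and the weight $\int_{-1}^0 J(x-y)\,dy$ is strictly positive on $(0,R)$, so $v_n\to c$ in $L^2(0,R)$. From the purely nonlocal term, a standard nonlocal Poincar\'e inequality on $(0,1)$ (see \cite{andreu2010nonlocal}) gives $\|v_n-\fint_0^1 v_n\|_{L^2(0,1)}\to 0$; combined with the previous convergence, $v_n\to c$ in $L^2(0,1)$. The zero-mean constraint forces $c=0$, contradicting $\|(u_n,v_n)\|_{L^2}=1$.

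\medskip

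\noindent\textbf{Step 4 (conclusion).} Combining Steps 2 and 3 applied to $\tilde w(\cdot,t)$ (which has zero mean), one gets
\begin{equation}
\frac{d}{dt}\|\tilde w(\cdot,t)\|_{L^2}^2\ \le\ -4\lambda\,\|\tilde w(\cdot,t)\|_{L^2}^2,
\end{equation}
so Gronwall yields $\|w(\cdot,t)-\bar w\|_{L^2}\le e^{-2\lambda t}\|w_0-\bar w\|_{L^2}$, i.e.\ the theorem with $\beta_1=2\lambda$ and $C=\|w_0-\bar w\|_{L^2}$.

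\medskip

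\noindent\textbf{Main obstacle.} The delicate point is Step 3: the inequality mixes a local Dirichlet seminorm on $(-1,0)$, a nonlocal seminorm on $(0,1)$, and an interface term involving only the trace $u(0)$, so it is not covered by the classical local or nonlocal Poincar\'e inequalities. The compactness-plus-contradiction scheme above seems the most economical route; the role of the interface coupling term is precisely to force $v$ to align with the boundary trace of $u$ at $x=0$, which is what prevents the two pieces from drifting to different constants.
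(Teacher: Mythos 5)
Your proposal is correct and follows essentially the same route as the paper: mass conservation, the dissipation identity $\frac{d}{dt}\tfrac12\|w-\bar w\|_{L^2}^2=-2E(w-\bar w)$, a coercivity (Poincar\'e-type) inequality $E(u,v)\ge \beta_1(\|u\|_{L^2}^2+\|v\|_{L^2}^2)$ over zero-mean data proved by a normalization--compactness--contradiction argument (this is the paper's Lemma~\ref{GQR} defining $\beta_1$), and Gronwall. The only cosmetic difference is in Step~3, where you use the interface coupling term first to pin $v_n$ to the trace $u_n(0)$ on $(0,R)$ and then the nonlocal Poincar\'e inequality to propagate this to all of $(0,1)$, whereas the paper first extracts the limit constant of $v_n$ via the nonlocal Poincar\'e inequality of \cite{ChChRo} and then uses the coupling term to identify the two constants.
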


\subsection{Rescaling the kernel}
In the following, we will show that the solutions of the evolution problem \eqref{local}-\eqref{nonlocal}, with the kernel $J$ rescaled suitably, converge to the classical local problem given by the heat equation at the whole domain. Consider the rescaled kernel given by
\begin{equation} \label{rescale-J}
   J^{\varepsilon}(x):= \frac{1}{\varepsilon^{3}}J\left(\frac{x}{\varepsilon}\right).  
\end{equation}
From now, we choose (and fix) the constants $C_{J,1}$ and $C_{J,2}$ that appears before the nonlocal terms as
 \begin{equation}
  C_{J,1}:= \frac{2}{M(J)},
\qquad
 \mbox{ and } \qquad
  C_{J,2}:= 1. 
 \end{equation}

In fact, our goal now is to show that the solutions of the local heat equation with Neumann boundary conditions,
\begin{align}\label{localcomplete}
\begin{cases}
   \displaystyle  \frac{\partial w}{\partial t}(x,t)=  \frac{\partial^2 w}{\partial x^2}(x,t), \quad x \in (-1,1), \quad t>0,\\[10pt]
    \displaystyle  \frac{\partial w}{\partial x}(-1,t)=  \frac{\partial w}{\partial x} (1,t)=0, \quad t>0,\\[10pt]
    w(x,0)=w_{0}(x), \quad x \in (-1,1),
    \end{cases}
\end{align}
can be obtained as the limit as $\varepsilon \to 0$ of the solution $w^{\varepsilon}$ to our local/nonlocal problem
with $J$ replaced by $J^\varepsilon$, given by \eqref{rescale-J}.
We will call $w^\varepsilon = (u^\varepsilon, v^\varepsilon)$ the solution to \eqref{local}--\eqref{nonlocal} with the rescaled kernel and
a fixed initial condition $w(x,0)=u_{0}(x)\chi_{(-1,0)} (x)+v_{0}(x) \chi_{(0 ,1)} (x)$ (here $u_{0}(x)$, $v_{0}(x)$ are fixed).

We have the following result:
\begin{theorem}
Let $w_{0}$ $\in$ $L^{2}(-1,1)$. For each $\varepsilon >0$, let $w^{\varepsilon}$ be the solution to \eqref{local}-\eqref{nonlocal} 
with $J$ replaced by $J^\varepsilon$ given by \eqref{rescale-J} and initial condition $w_{0}$. Then, it holds that
\begin{equation}
    \lim_{\varepsilon \to 0}\left(\max_{t \in [0,T]}\parallel w^{\varepsilon}(\cdot,t)-w(\cdot,t) \parallel_{L^2(-1,1)}\right)=0
 \end{equation}
where $w$ is the solution to \eqref{localcomplete}.
\end{theorem}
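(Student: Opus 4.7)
The plan is to realize both problems as $L^2$ gradient flows and invoke the classical Brezis--Attouch stability theorem: if a sequence of convex, proper, lower-semicontinuous functionals converges in the Mosco sense on a Hilbert space, then the associated gradient-flow semigroups converge uniformly on compact time intervals for every strongly-convergent initial datum. Accordingly, let $E^{\varepsilon}$ denote the functional \eqref{energy} with $J$ replaced by $J^{\varepsilon}$ and with the choices $C_{J,1}=2/M(J)$, $C_{J,2}=1$, extended by $+\infty$ outside $\mathcal{B}$, and let
\[ E(w):=\tfrac12\int_{-1}^{1}|w_{x}|^{2}\,dx \text{ for } w\in H^{1}(-1,1), \qquad E(w):=+\infty \text{ otherwise.} \]
The limit problem \eqref{localcomplete} is exactly the $L^{2}$ gradient flow of $E$, and the initial datum $w_{0}\in L^{2}(-1,1)$ is common to all $\varepsilon$, so the whole task reduces to verifying $E^{\varepsilon}\to E$ in the sense of Mosco.

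For the recovery (limsup) inequality I would take, for $w\in H^{1}(-1,1)$, the constant sequence $w^{\varepsilon}\equiv w$. The local term is unchanged, and the Bourgain--Brezis--Mironescu scaling computation (which motivated the normalization $C_{J,1}=2/M(J)$) yields
\[ \frac{1}{2M(J)}\int_{0}^{1}\!\int_{0}^{1}J^{\varepsilon}(x-y)\bigl(w(y)-w(x)\bigr)^{2}\,dy\,dx \;\longrightarrow\; \tfrac12\int_{0}^{1}|w_{x}|^{2}\,dx. \]
For the coupling term I use the identity $\int_{-1}^{0}J^{\varepsilon}(x-y)\,dy=\varepsilon^{-2}K(x/\varepsilon)$ with $K(\xi):=\int_{\xi}^{\infty}J(z)\,dz$ (supported in $[0,R]$); together with $|w(x)-w(0)|\le C|x|$ (first for smooth $w$, then by density) this reduces the coupling integral to a quantity bounded by $\varepsilon\int_{0}^{R}\xi^{2}K(\xi)\,d\xi=O(\varepsilon)$. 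For $w\notin H^{1}(-1,1)$ the inequality is trivial since $E(w)=+\infty$.

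For the liminf inequality suppose $w^{\varepsilon}\rightharpoonup w$ in $L^{2}(-1,1)$ with $\liminf E^{\varepsilon}(w^{\varepsilon})<\infty$ along a subsequence. The local part forces $u^{\varepsilon}$ bounded in $H^{1}(-1,0)$, hence $u=w|_{(-1,0)}\in H^{1}(-1,0)$, $u^{\varepsilon}\to u$ in $L^{2}(-1,0)$, and $u^{\varepsilon}(0)\to u(0^{-})$ in the trace sense. Ponce's compactness/lower-semicontinuity result for BBM functionals with uniform second moment gives $v=w|_{(0,1)}\in H^{1}(0,1)$, strong $L^{2}$ convergence of $v^{\varepsilon}$, and
\[ \liminf_{\varepsilon\to0}\frac{1}{2M(J)}\int_{0}^{1}\!\int_{0}^{1}J^{\varepsilon}(x-y)\bigl(v^{\varepsilon}(y)-v^{\varepsilon}(x)\bigr)^{2}\,dy\,dx \;\ge\;\tfrac12\int_{0}^{1}|v_{x}|^{2}\,dx. \]
The essential use of the coupling term is to force continuity of $w$ across the interface: rewriting it after the change of variables $x=\varepsilon\xi$ as $\varepsilon^{-1}\int_{0}^{R}K(\xi)(v^{\varepsilon}(\varepsilon\xi)-u^{\varepsilon}(0))^{2}\,d\xi$, its boundedness as $\varepsilon\to0$, combined with $u^{\varepsilon}(0)\to u(0^{-})$ and the $L^{2}$-trace of $v$ at $0^{+}$, pins down $v(0^{+})=u(0^{-})$. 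Consequently $w\in H^{1}(-1,1)$ and $\liminf E^{\varepsilon}(w^{\varepsilon})\ge\tfrac12\int_{-1}^{1}|w_{x}|^{2}=E(w)$.

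The main obstacle is the liminf analysis of the interface coupling term: its $\varepsilon^{-1}$ singular scaling, together with the fact that $w$ a priori has independent traces $u(0^{-})$ and $v(0^{+})$, requires a careful boundary-layer argument to conclude that these traces must coincide in the limit. Without this trace-matching the Mosco-limit would live on a larger space than $H^{1}(-1,1)$ and would fail to identify \eqref{localcomplete}. Once Mosco convergence is established, the Brezis--Attouch theorem (applied to the fixed initial datum $w_{0}$) delivers exactly $\max_{t\in[0,T]}\|w^{\varepsilon}(\cdot,t)-w(\cdot,t)\|_{L^{2}(-1,1)}\to0$ as required.
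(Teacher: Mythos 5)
Your overall architecture --- realize both problems as gradient flows of convex energies on $L^2(-1,1)$, prove Mosco convergence $E^\varepsilon\to E$, and conclude by the Brezis--Pazy/Attouch stability theorem --- is exactly the paper's strategy, and your recovery sequence (the constant sequence $w^\varepsilon\equiv w$) and the treatment of the limsup inequality coincide with the paper's. The genuine gap is in the liminf inequality, at precisely the point you yourself flag as ``the main obstacle'': the trace matching $v(0^+)=u(0^-)$. The boundedness of the coupling term gives, after your change of variables, $\int_0^R K(\xi)\,(v^\varepsilon(\varepsilon\xi)-u^\varepsilon(0))^2\,d\xi=O(\varepsilon)$, which only controls $v^\varepsilon$ in an $O(\varepsilon)$-neighbourhood of the interface. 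The trace $v(0^+)$ of the $L^2$-limit is determined by the behaviour of $v$ at a fixed macroscopic scale, and nothing in your sketch rules out that $v^\varepsilon$ transitions from the value $u^\varepsilon(0)$ at scale $\varepsilon$ to a different value at scale $O(1)$ while keeping the BBM energy on $(0,1)$ bounded (that energy bound is not, by itself, a uniform-in-$\varepsilon$ modulus of continuity up to the boundary). So ``pins down $v(0^+)=u(0^-)$'' is an assertion, not an argument, and it is the crux of the whole proof: without it the Mosco limit lives on the larger space $H^1(-1,0)\times H^1(0,1)$ and fails to identify \eqref{localcomplete}.

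The paper closes this gap differently and more cleanly: it proves beforehand (Lemma \ref{energy.lema} and its rescaled version \eqref{controlenergy2}, both by a compactness/contradiction argument) that the local/nonlocal energy dominates, with a constant $k$ independent of $\varepsilon$, the full nonlocal BBM energy on all of $(-1,1)$, i.e.
\begin{equation}
E^\varepsilon(w^\varepsilon)\;\geq\;k\,\frac{1}{\varepsilon^{3}}\int_{-1}^{1}\int_{-1}^{1}J\Bigl(\frac{x-y}{\varepsilon}\Bigr)\bigl(w^\varepsilon(y)-w^\varepsilon(x)\bigr)^2\,dy\,dx .
\end{equation}
A uniform bound on $E^\varepsilon(w^\varepsilon)$ then yields, via the standard BBM compactness theorem applied on the \emph{whole} interval $(-1,1)$, that the limit $w$ belongs to $H^1(-1,1)$: the continuity across the interface comes for free, with no boundary-layer analysis. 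If you wish to keep your route you must actually supply the boundary-layer argument (a quantitative estimate connecting the scale-$\varepsilon$ averages of $v^\varepsilon$ near $0$ to its values at fixed scale, using the interior BBM bound); otherwise import the $\varepsilon$-uniform energy-control lemma. A minor additional slip: in the limsup step the bound $|w(x)-w(0)|\le C|x|$ is false for general $w\in H^1(-1,1)$; use $|w(x)-w(0)|^2\le x\int_0^x|w_x|^2$ instead, which still makes the coupling term $o(1)$.
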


ORGANIZATION OF THE PAPER: The paper is organized as follows: In Section 2, 
we prove a key result concerning the control of the pure nonlocal energy by our local/nonlocal energy. 
In Section 3, we prove the existence and uniqueness of the problem, the total mass conservation property and the asymptotic behavior of the solutions for large times. In Section 4, we deal with the rescaling of the kernel.
Finally, in the final section (Section \ref{sect-higher}), we explain how to extend our results to higher dimensions.

\section{Preliminaries}

\subsection{Control of the nonlocal energy}
In this section, we prove the first crucial lemma that ensures domination of our energy over the pure nonlocal energy.

\begin{lemma}\label{energy.lema}
Let $$(u,v) \in \mathcal{B}:=\left\{u \in H^1(-1,0),  v \in L^2(0,1) \right\}.$$ 
Then, there exists a constant $k:=k(J,\Omega)>0$ such that
\begin{equation} \label{energycontrol}
\begin{array}{l}
\displaystyle
 \frac{1}{2} \int_{-1}^{0} \Big| \frac{\partial u}{\partial x}\Big|^2 dx + \frac{C_{J,1}}{4}\int_{0}^{1}\int_{0}^{1}J(x-y)\left(v(y)-v(x)\right)^2 dydx 
  + \frac{C_{J,2}}{2}\int_{0}^{1}\int_{-1}^{0}J(x-y)\left(v(x)-u(0)\right)^2 dydx  \\[10pt]
  \displaystyle \qquad  \geq k \int_{-1}^{1}\int_{-1}^{1}J(x-y)\left(w(y)-w(x)\right)^2 dydx.
\end{array}
\end{equation}
\end{lemma}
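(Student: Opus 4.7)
The plan is to expand the right-hand side of \eqref{energycontrol} by splitting both integrals according to the decomposition $(-1,1) = (-1,0) \cup (0,1)$, giving four pieces that we bound term-by-term using the three terms on the left-hand side. Using that $w = u$ on $(-1,0)$ and $w = v$ on $(0,1)$, and the symmetry of $J$:
\begin{align*}
\int_{-1}^{1}\int_{-1}^{1}J(x-y)(w(y)-w(x))^2 dydx
&= \int_{-1}^{0}\int_{-1}^{0}J(x-y)(u(y)-u(x))^2 dydx \\
&\quad + \int_{0}^{1}\int_{0}^{1}J(x-y)(v(y)-v(x))^2 dydx \\
&\quad + 2\int_{-1}^{0}\int_{0}^{1}J(x-y)(v(y)-u(x))^2 dydx.
\end{align*}

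First I would handle the pure local piece. Using $u(y)-u(x) = \int_x^y u'(s)\,ds$ together with Cauchy--Schwarz, we get $(u(y)-u(x))^2 \leq |y-x|\int_{-1}^{0}|u'(s)|^2\,ds$, so the compact support of $J$ (only $|x-y|\leq R$ contributes) yields a bound by $C(J)\|u_x\|_{L^2(-1,0)}^2$. The pure nonlocal piece on $(0,1)\times(0,1)$ is already exactly the second term of the left-hand side up to the multiplicative constant $C_{J,1}/4$.

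The main obstacle is the crossed term $\int_{-1}^{0}\int_{0}^{1}J(x-y)(v(y)-u(x))^2 dydx$, where the kernel lives on both subdomains but the third energy term only probes $v(x)-u(0)$. My approach is to insert $u(0)$ into the difference and use the elementary inequality $(a+b)^2 \leq 2a^2 + 2b^2$:
\begin{equation*}
(v(y)-u(x))^2 \leq 2(v(y)-u(0))^2 + 2(u(0)-u(x))^2.
\end{equation*}
The first piece, after relabeling, is bounded by a constant multiple of the third term of the energy on the left-hand side (note that in that term the integration is $\int_0^1\int_{-1}^0 J(x-y)\,dy$ of $(v(x)-u(0))^2$, so Fubini plus renaming variables matches things up). For the second piece, $u(0)-u(x) = \int_x^0 u'(s)\,ds$ gives $(u(0)-u(x))^2 \leq |x|\,\|u_x\|_{L^2(-1,0)}^2$; since $J(x-y)$ forces $|x| \leq R$ when $y\in(0,1)$ and $x\in(-1,0)$, we bound the integral by $C(J)\|u_x\|_{L^2(-1,0)}^2$.

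Collecting the four bounds, each piece of the right-hand side is controlled by one of the three terms on the left (with explicit constants depending on $R$, $\|J\|_{L^\infty}$, and on $C_{J,1}, C_{J,2}$). Taking $k$ to be the minimum of the resulting ratios yields \eqref{energycontrol}. The only nontrivial step is the crossed-term estimate; the other estimates are routine consequences of the $H^1$-trace/Poincaré-type inequality used for the piece at the interface and the compact support of $J$.
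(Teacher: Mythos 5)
Your proof is correct, but it is genuinely different from the one in the paper. The paper argues by contradiction and compactness: it normalizes the right-hand side to $1$, assumes the left-hand side tends to $0$ along a sequence, extracts (weakly/strongly) convergent subsequences whose limits on $(-1,0)$ and $(0,1)$ are constants $A$ and $B$, uses the coupling term to force $A=B$ and the (harmless) zero-mean normalization to force $A=B=0$, and contradicts the normalization. Your argument is instead direct and quantitative: split the right-hand side over $(-1,0)^2$, $(0,1)^2$ and the cross region, absorb the $(0,1)^2$ piece into the nonlocal energy term verbatim, control the $(-1,0)^2$ piece by $\|u_x\|_{L^2}^2$ via the fundamental theorem of calculus, Cauchy--Schwarz and the compact support of $J$ (which caps $|x-y|$ by $R$), and handle the cross term by inserting $u(0)$ and using $(a+b)^2\le 2a^2+2b^2$, so that one half matches the coupling term (after Fubini and the symmetry of $J$) and the other half is again a trace-type estimate controlled by $\|u_x\|_{L^2}^2$. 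All steps check out: $u\in H^1(-1,0)$ embeds into $C([-1,0])$ so $u(0)$ and the pointwise estimates $(u(0)-u(x))^2\le |x|\,\|u_x\|^2_{L^2(-1,0)}$ are legitimate, and $\int_{\mathbb R}J=1$ bounds all the kernel integrals. What your route buys is an explicit constant, e.g.\ $k=\bigl(\max\{10R,\,4/C_{J,1},\,8/C_{J,2}\}\bigr)^{-1}$, with no compactness, no extraction of subsequences, and no need for the zero-mean reduction; this also makes transparent the paper's later claim that the constant does not degenerate in $\varepsilon$ (for the rescaled kernel the support radius $R\varepsilon$ only improves your bound, although the paper's rescaled version is stated as a separate lemma with different scalings). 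The compactness proof, by contrast, generalizes more readily to situations where no explicit trace or Poincar\'e estimate is at hand, which is how the paper reuses the argument in higher dimensions.
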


\begin{proof}
Assume that the conclusion does not hold. Then, there exists a sequence $\{w_{n}\} \in L^2(-1,1)$, $\{u_{n}\} \in H^1(-1,0)$ and $\{v_{n}\} \in L^2(0,1)$, 
such that 
\begin{equation} \label{jjj}
    \int_{-1}^1 \int_{-1}^1 J(x-y) (w_n(y) - w_n(x))^2 dy dx =1,
\end{equation}
and satisfying,
\begin{equation}\label{mass}
 \int_{-1}^{1} w_{n}= \int_{-1}^{0} u_{n} + \int_{0}^{1} v_{n} = 0,
\end{equation}
and
\begin{align}
  1 & \geq n \left( \frac{1}{2}\int_{-1}^0 \Big| \frac{\partial u_n}{\partial x} \Big|^{2} +\frac{C_{J,1}}{4} \int_{0}^{1} \int_{0}^{1} J(x-y) (v_n(y) - v_n(x))^2 dy dx + \frac{C_{J,2}}{2}\int_{-1}^{0} \int_{0}^{1} J(x-y) (u_n(0) - v_n(x))^2 dx dy\right),
  \end{align}
for every  $n \in \mathbb{N}$.

Consequently, taking the limit in $n$, we obtain
\begin{equation}\label{lim1}
    \lim_{n}\left(\frac{1}{2}\int_{-1}^0  \Big| \frac{\partial u_n}{\partial x} \Big|^{2} \right)=0,
\end{equation}
\begin{equation}\label{lim2}
    \lim_{n}\left(\frac{C_{J,1}}{4}\int_{0}^{1} \int_{0}^{1} J(x-y) (v_n(y) - v_n(x))^2 dy dx\right)=0,
\end{equation}
and
\begin{equation}\label{lim3}
    \lim_{n}\left(\frac{C_{J,2}}{2}\int_{-1}^{0} \int_{0}^{1} J(x-y) (u_n(0) - v_n(x))^2 dx dy\right)=0.
\end{equation}

 From \eqref{lim1} together with \eqref{jjj} that implies a bound on the $L^2$-norm of $w_n$, we can take a subsequence, also denoted $\{u_{n}\}$, which weakly converge for some limit in $H^1(-1,0)$ that is given by a constant $A$,  
\begin{align}\nonumber
&  u_n \to A  \quad \text{in} \quad  L^{2}(-1,0) \quad \text{and}, \\
&  u_n \to A \quad \text{uniformly} \quad \text{in} \quad (-1,0).
\end{align}
Note that, in particular, $u_{n}(0) \to A$. Besides that, from equation \eqref{lim2} we also can take a subsequence, also denoted as $\{v_{n}\}$, which strongly converge for some limit in $L^2(0,1)$ that is given by a constant $B$.

From the \eqref{lim3}, we obtain $A=B$. 
Moreover, from equation \eqref{mass} we get that $A+B=0$. 
Therefore, we get
$
A=B=0$.
On the other hand, we have
\begin{align}\nonumber
 & \int_{-1}^1 \int_{-1}^1 J(x-y) (w_n(y) - w_n(x))^2 dy dx =1, 
\end{align}
which implies
\begin{equation*}
     \int_{-1}^1 \int_{-1}^1 J(x-y) (A - B)^2 dy dx =1, 
\end{equation*}
that contradicts $A=B=0$.
\end{proof}

The main advantage of this estimate is to observe that the constant obtained from \eqref{energycontrol} 
can be taken independent of $\varepsilon$ when we consider the rescaled kernel $J^\varepsilon$. In fact,
since a simple inspection of the previous proof gives that we do not have any dependence on $\varepsilon$ in the constant $k$.

\subsection{A Poincaré type inequality}
Let us consider $w_{\varepsilon}$ as in the introduction, that is,
\begin{align*}
w_{\varepsilon} (x)=
\begin{cases}
u_{\varepsilon}(x), \quad \text{if} \quad x \in (-1,0)\\
u_{\varepsilon}(x),\quad \text{if} \quad x \in (0,1).
\end{cases}
\end{align*} 

From \cite{andreu2010nonlocal} we have that
\begin{lemma}
There exists a constant $C>0$ (independent of $\epsilon$) such that, for every $\{w_{\varepsilon_{n}}\} \in L^2(-1,1)$ it holds 
\begin{equation}\label{poincareine}
\begin{array}{l}
\displaystyle 
\int_{-1}^{1}\left|w_{\varepsilon_{n}}(x)-\fint_{-1}^{1}w_{\varepsilon_{n}}(x)dx \right|^2 dx  
 \leq C \frac{1}{\varepsilon_{n}^3}\int_{-1}^1 \int_{-1}^1 J\left(\frac{x-y}{\varepsilon_{n}}\right) (w_{\varepsilon_n}(y) - w_{\varepsilon_n}(x))^2 dy dx.
\end{array}
\end{equation}
\end{lemma}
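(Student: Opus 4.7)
The argument follows the contradiction-compactness scheme already used in Lemma \ref{energy.lema}, now in the $\varepsilon$-dependent regime. Suppose that no such $C$ exists. Then for each $n$ there exist $\varepsilon_n>0$ and $w_{\varepsilon_n}\in L^2(-1,1)$ for which the claimed inequality fails by a factor $n$. Replacing $w_{\varepsilon_n}$ by $(w_{\varepsilon_n}-\fint w_{\varepsilon_n})/\|w_{\varepsilon_n}-\fint w_{\varepsilon_n}\|_{L^2}$, I may assume
$$\int_{-1}^1 w_{\varepsilon_n}\,dx=0, \qquad \|w_{\varepsilon_n}\|_{L^2(-1,1)}=1, \qquad \frac{1}{\varepsilon_n^{\,3}}\int_{-1}^1\!\int_{-1}^1 J\!\left(\frac{x-y}{\varepsilon_n}\right)(w_{\varepsilon_n}(y)-w_{\varepsilon_n}(x))^2\,dy\,dx<\frac{1}{n}.$$

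The crucial tool is a Bourgain--Brezis--Mironescu type compactness result, precisely in the form proved in \cite{andreu2010nonlocal}: any sequence $\{w_{\varepsilon_n}\}$ bounded in $L^2(-1,1)$ whose rescaled nonlocal energy stays bounded is relatively compact in $L^2(-1,1)$, and every cluster point $w$ belongs to $H^1(-1,1)$ with
$$\int_{-1}^1 |w'(x)|^2\,dx\leq C\liminf_n \frac{1}{\varepsilon_n^{\,3}}\int_{-1}^1\!\int_{-1}^1 J\!\left(\frac{x-y}{\varepsilon_n}\right)(w_{\varepsilon_n}(y)-w_{\varepsilon_n}(x))^2\,dy\,dx.$$
Along a subsequence, $w_{\varepsilon_n}\to w$ strongly in $L^2(-1,1)$ with $w\in H^1$ and $\|w'\|_{L^2}=0$. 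Hence $w$ is constant; the condition $\int w=0$ then forces $w\equiv 0$, which contradicts $\|w\|_{L^2}=\lim_n\|w_{\varepsilon_n}\|_{L^2}=1$. The standard Poincar\'e inequality on $(-1,1)$ would close the argument equally well, by giving directly $\|w\|_{L^2}\leq C\|w'\|_{L^2}=0$.

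The logical skeleton is therefore identical to the one in Lemma \ref{energy.lema}; the main obstacle, and the only step that genuinely requires the hypothesis $M(J)<\infty$ (which here is automatic from the compact support of $J$), is the uniform-in-$\varepsilon$ BBM compactness statement. This is why the lemma is simply quoted from \cite{andreu2010nonlocal}: reproving it would require the full BBM machinery adapted to compactly supported kernels. An alternative, more elementary route, if one wished to avoid the BBM theorem, is to split into the subcases $\varepsilon_n\to 0$ (where BBM is needed) and $\varepsilon_n\geq\varepsilon_0>0$ (where the problem reduces to a standard nonlocal Poincar\'e inequality with fixed kernel $J(\cdot/\varepsilon_0)$, proved by the very same compactness-contradiction argument as in Lemma \ref{energy.lema}); this splitting avoids the need to identify the limit as an $H^1$ function but still relies on the BBM-type compactness in the first subcase.
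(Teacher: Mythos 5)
The paper offers no proof of this lemma at all: inequality \eqref{poincareine} is simply imported from \cite{andreu2010nonlocal}, where the uniform-in-$\varepsilon$ Poincar\'e inequality for rescaled kernels is established by essentially the scheme you describe. So your argument is best read as a reconstruction of the quoted reference's proof rather than a genuinely different route, and it is essentially correct: the normalization to zero mean and unit $L^2$-norm is legitimate because both sides of \eqref{poincareine} are invariant under adding constants and homogeneous of degree two, and the contradiction then hinges on the $L^2$-precompactness of sequences with bounded rescaled nonlocal energy together with the $\liminf$ lower bound by $\int |w'|^2$ --- the same Theorem 6.11 of \cite{andreu2010nonlocal} that the paper itself invokes later when proving \eqref{controlenergy2}, so you are not using anything outside the paper's toolbox (and your observation that $M(J)<\infty$ is automatic here is right, since $J$ is continuous with compact support). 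The one point that should be promoted from your closing remark into the body of the argument is the case distinction on $\varepsilon_n$: the compactness theorem you invoke genuinely requires $\varepsilon_n\to 0$, and a contradiction sequence need not satisfy this a priori, so the split into $\varepsilon_n\to 0$ (BBM compactness) and $\varepsilon_n\ge\varepsilon_0>0$ (fixed-scale nonlocal Poincar\'e, with the minor extra observation that the constant can be taken uniform for $\varepsilon_n$ ranging in a compact interval, since the kernels $J(\cdot/\varepsilon_n)$ still vary with $n$ there) is a necessary step, not an optional alternative. With that caveat absorbed, your proof is complete and, unlike the paper, actually supplies an argument.
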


As consequence of \eqref{poincareine} and the control of the nonlocal energy given by
\eqref{energycontrol}
 we have the following Poincaré type inequality.

\begin{lemma}
Let $w_{\varepsilon} \in \mathcal{B}:=\left\{u_{\varepsilon} \in H^1({(-1,0)}),  v_{\varepsilon} \in L^2((0,1)) \right\}$. 
Then there exists a constant $k:=k(J,\Omega)>0$, independent of $\varepsilon$, such that
\begin{equation}\label{controlenergy2}
\begin{array}{l}
\displaystyle
 \frac{1}{2} \int_{-1}^{0}\Big| \frac{\partial w_\varepsilon}{\partial x} \Big|^{2} dx + \frac{C_{J,1}}{4\varepsilon^{3}}\int_{0}^{1}\int_{0}^{1}\left(\frac{J(x-y)}{\varepsilon}\right)\left(w_{\varepsilon}(y)-w_{\varepsilon}(x)\right)^2 dydx \\[10pt]
\displaystyle \qquad
  + \frac{C_{J,2}}{2\varepsilon^{3}}\int_{0}^{1}\int_{-1}^{0}J\left(\frac{x-y}{\varepsilon}\right)\left(w_{\varepsilon}(x)-w_{\varepsilon}(0)\right)^2 dydx  \\[10pt]
  \displaystyle \geq k \frac{1}{\varepsilon^{3}}\int_{-1}^{1}\int_{-1}^{1}J\left(\frac{x-y}{\varepsilon}\right)\left(w_{\varepsilon}(y)-w_{\varepsilon}(x)\right)^2 dydx.
\end{array}
\end{equation}
\end{lemma}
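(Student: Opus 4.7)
The inequality \eqref{controlenergy2} is exactly Lemma \ref{energy.lema} applied with $J$ replaced by the rescaled kernel $J^{\varepsilon}(z)=\varepsilon^{-3}J(z/\varepsilon)$; the real content of the statement is the uniformity of the constant $k$ in $\varepsilon$. Rather than rerun the contradiction argument of Lemma \ref{energy.lema} along a sequence in which both the functions and $\varepsilon$ vary (which would force one to invoke Bourgain--Brezis--Mironescu / Ponce-type compactness to keep the kernel-dependent limits under control), I would prove the inequality directly, by a term-by-term comparison that makes the $\varepsilon$-independence transparent.

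The plan is to split
\begin{equation*}
\int_{-1}^{1}\int_{-1}^{1} J^{\varepsilon}(x-y)\bigl(w_{\varepsilon}(y)-w_{\varepsilon}(x)\bigr)^{2}\,dy\,dx = I_{1}+I_{2}+2I_{3},
\end{equation*}
with $I_{1}$ over $(-1,0)^{2}$, $I_{2}$ over $(0,1)^{2}$, and $I_{3}$ over $(-1,0)\times(0,1)$ (the fourth region contributes $I_{3}$ again by the symmetry of $J^{\varepsilon}$), and to dominate each summand by one of the three ingredients on the left of \eqref{controlenergy2}. The piece $I_{2}$ is literally the nonlocal $v$-term, up to the factor $4/C_{J,1}$, so nothing has to be done. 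For $I_{1}$, I would write $(u_{\varepsilon}(y)-u_{\varepsilon}(x))^{2}\leq |y-x|\int_{\min}^{\max}|u_{\varepsilon}'|^{2}\,d\xi$, apply Fubini, and use the scale-invariant identity
\begin{equation*}
\int J^{\varepsilon}(z)\,z^{2}\,dz \;=\; \int J(s)\,s^{2}\,ds \;=\; M(J),
\end{equation*}
which is precisely why the exponent chosen in \eqref{rescale-J} is three. This produces $I_{1}\leq C\,M(J)\int_{-1}^{0}|u_{\varepsilon}'|^{2}$ with an $\varepsilon$-independent constant.

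For the cross piece $I_{3}$ I would use $(v_{\varepsilon}(y)-u_{\varepsilon}(x))^{2}\leq 2(v_{\varepsilon}(y)-u_{\varepsilon}(0))^{2}+2(u_{\varepsilon}(0)-u_{\varepsilon}(x))^{2}$. The first summand reproduces the coupling term after relabeling variables and using the symmetry of $J^{\varepsilon}$. The second is controlled by $(u_{\varepsilon}(0)-u_{\varepsilon}(x))^{2}\leq |x|\int|u_{\varepsilon}'|^{2}$, so the task reduces to bounding
\begin{equation*}
\int_{-1}^{0}\int_{0}^{1} J^{\varepsilon}(x-y)\,|x|\,dy\,dx
\end{equation*}
uniformly in $\varepsilon$. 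The compact support of $J^{\varepsilon}$ forces $x\in(-R\varepsilon,0)$ and $y\in(0,x+R\varepsilon)$, and the substitution $x=\varepsilon s$, $y=\varepsilon r$ makes the scaling $\varepsilon^{-3}\cdot\varepsilon^{2}\cdot\varepsilon=1$ explicit, so this integral is an $\varepsilon$-free constant depending only on $J$ and $R$.

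Summing the three bounds yields an estimate of the form
\begin{equation*}
\int_{-1}^{1}\int_{-1}^{1} J^{\varepsilon}(x-y)(w_{\varepsilon}(y)-w_{\varepsilon}(x))^{2}\,dy\,dx \leq C_{1}\int_{-1}^{0}|u_{\varepsilon}'|^{2}+C_{2}\cdot(\text{nonlocal $v$-term})+C_{3}\cdot(\text{coupling term}),
\end{equation*}
with $C_{1},C_{2},C_{3}$ depending only on $J$ and $\Omega$, which is equivalent to \eqref{controlenergy2} with $k=1/\max(2C_{1},\,4C_{2}/C_{J,1},\,2C_{3}/C_{J,2})$. The main obstacle is the cross-region analysis: one must verify that the $\varepsilon^{-3}$ blow-up of $J^{\varepsilon}$ is compensated exactly by the $\varepsilon^{2}$-area of its effective support across the interface together with the $\varepsilon$-smallness of $|x|$ there. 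This cancellation is the reason the constant $k$ of Lemma \ref{energy.lema} survives the rescaling $J\rightsquigarrow J^{\varepsilon}$.
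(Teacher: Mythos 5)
Your proof is correct, but it takes a genuinely different route from the paper. The paper argues by contradiction along a sequence $\varepsilon_n\to 0$ of normalized, mean-zero functions: it uses the auxiliary Poincar\'e inequality \eqref{poincareine} to get an $L^2$ bound, the $H^1$ bound on $(-1,0)$ and the Bourgain--Brezis--Mironescu-type compactness result (Theorem 6.11 of \cite{andreu2010nonlocal}) on $(0,1)$ to identify the limits as constants $A_1=A_2=0$, and then derives a contradiction with the normalization \eqref{IIa}. Your direct decomposition $\int_{-1}^1\int_{-1}^1=I_1+I_2+2I_3$ avoids all of that machinery: $I_2$ is the nonlocal $v$-term verbatim, $I_1$ is controlled by $M(J)\int_{-1}^0|u_\varepsilon'|^2$ via the standard Fubini argument (the measure of the set of $x$ with $\xi\in[x,x+\varepsilon z]$ being $\varepsilon|z|$, which together with the $\varepsilon^{-3}$ normalization produces exactly the second moment $M(J)$), and the cross term is split by $(v(y)-u(x))^2\le 2(v(y)-u(0))^2+2(u(0)-u(x))^2$ into the coupling term plus a piece whose $\varepsilon^{-3}$ singularity is cancelled by the $\varepsilon^2$-area of $\{|x-y|\le R\varepsilon,\ x<0<y\}$ and the factor $|x|\le R\varepsilon$. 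All three bounds are elementary and visibly $\varepsilon$-free, so you get an explicit constant $k$ depending only on $M(J)$, $R$ and $C_{J,1},C_{J,2}$. What your approach buys, besides explicitness, is that it handles Lemma \ref{energy.lema} and the rescaled version in one stroke; it also supplies precisely the upper bound on the full rescaled energy in terms of the three left-hand pieces that is implicitly needed to make the paper's final step (the passage from $A_1=A_2=0$ to a contradiction with \eqref{IIa}) fully rigorous, since strong $L^2$ convergence of $w_{\varepsilon_n}$ to $0$ alone does not let one pass to the limit in the normalization. What the paper's compactness route buys in exchange is robustness: it extends with little change to the higher-dimensional setting of Section \ref{sect-higher}, where the geometry of the interface makes the explicit cross-term computation more delicate.
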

\begin{proof}
Let us argue by contradiction. Suppose that \eqref{controlenergy2} is false. Then, for every $n \in \mathbb{N}$, there exists a subsequence $\varepsilon_{n} \to 0$, and $\{w_{\varepsilon_n}\} \in L^2(-1,1)\cap H^1(-1,0)$, such that 
\begin{equation}\label{Ia}
 \int_{-1}^{1} w_{\varepsilon_n}= \int_{-1}^{0} u_{\varepsilon_n} + \int_{0}^{1} v_{\varepsilon_n} = 0,
\end{equation}
\begin{equation}\label{IIa}
    \frac{1}{\varepsilon_{n}^3}\int_{-1}^1 \int_{-1}^1 J\left(\frac{x-y}{\varepsilon_{n}}\right) (w_{\varepsilon_n}(y) - w_{\varepsilon_n}(x))^2 dy dx =1,
\end{equation}
and,
\begin{align}\label{IIIa}
  \frac1{n} & \geq \left( \frac{1}{2}\int_{-1}^0 \Big| \frac{\partial w_{\varepsilon_n}}{\partial x} \Big|^{2} +\frac{C_{J,1}}{4\varepsilon_{n}^3} \int_{0}^{1} \int_{0}^{1} J\left(\frac{x-y}{\varepsilon_{n}}\right) (w_{\varepsilon_n}(y) - w_{\varepsilon_n}(x))^2 dy dx \right. \\ \nonumber
& \qquad \qquad  \qquad  \qquad  \qquad  + \left. \frac{C_{J,2}}{2\varepsilon_{n}^3}\int_{-1}^{0} \int_{0}^{1} J\left(\frac{x-y}{\varepsilon_{n}}\right) (w_{\varepsilon_{n}}(0) - w_{\varepsilon_n}(x))^2 dx dy\right), \quad \forall  n \in \mathbb{N.}
\end{align}

Taking the limit in $n$ in \eqref{IIIa}, we obtain
\begin{equation}\label{lim11}
    \lim_{n}\left(\frac{1}{2}\int_{-1}^0 \Big| \frac{\partial w_{\varepsilon_n} }{\partial x} \Big|^{2}\right)=0,
\end{equation}
\begin{equation}\label{lim21}
    \lim_{n}\left(\frac{C_{J,1}}{4\varepsilon_{n}^3} \int_{0}^{1} \int_{0}^{1} J\left(\frac{x-y}{\varepsilon_{n}}\right) (w_{\varepsilon_n}(y) - w_{\varepsilon_n}(x))^2 dy dx\right)=0,
\end{equation}
and
\begin{equation}\label{lim31}
    \lim_{n}\left(\frac{C_{J,2}}{2\varepsilon_{n}^3}\int_{-1}^{0} \int_{0}^{1} J\left(\frac{x-y}{\varepsilon_{n}}\right) (w_{\varepsilon_{n}}(0) - w_{\varepsilon_n}(x))^2 dx dy\right)=0.
\end{equation}

 From \eqref{lim11} we have that $w_{\varepsilon_{n}}$ is bounded in $H^1(-1,0)$, so passing to a subsequence, also denoted $\{w_{\varepsilon_{n}}\}$, such that $\varepsilon_{n} \to 0$, we have
\begin{align}\nonumber
&  w_{\varepsilon_{n}} \rightharpoonup w  \quad \text{in} \quad  H^1(-1,0), \quad \\\nonumber
&  w_{\varepsilon_{n}} \to w \quad \text{in} \quad  L^2(-1,0) \quad \text{and} \\\nonumber
&  w_{\varepsilon_{n}} \quad \text{converges} \quad \text{uniformly} \quad \text{in} \quad (-1,0).
\end{align}
Thanks to \eqref{lim11} and Fatou's Lemma we also know that
\begin{equation}\label{1a}
\frac{1}{2}\int_{-1}^{0}\Big| \frac{\partial w}{\partial x} \Big|^{2} \leq 
\liminf_{\varepsilon_{n} \to 0}\frac{1}{2}\int_{-1}^{0}\Big| \frac{\partial w_{\varepsilon_n}}{\partial x} \Big|^{2} =0.
\end{equation}
Hence, the limit is a constant, let us call $w=A_{1} \in H^1(-1,0)$. 

Now, we shall see that $\{w_{\varepsilon_{n}}\}$ is also bounded in $L^2(0,1)$ to see that, as $\varepsilon_{n} \to 0$, $\{w_{\varepsilon_{n}}\}$ weakly converges in $L^2(0,1)$ to some limit $w$ (that will be a constant $A_{2}$).

Thanks to \eqref{poincareine} we have
\begin{align*}
\int_{-1}^{1} \left|w_{\varepsilon_{n}}(x)-\fint_{-1}^{1}w_{\varepsilon_{n}}(x)dx \right|^2 dx \leq C,
\end{align*} 
which implies 
\begin{align}\label{boundedvn}
\int_{-1}^{1}|w_{\varepsilon_{n}}(x)dx |^2 dx =\int_{-1}^{0}|u_{\varepsilon_{n}}(x)dx |^2 dx+\int_{0}^{1}|v_{\varepsilon_{n}}(x)dx |^2 dx \leq C.
\end{align} 
Thanks to \eqref{boundedvn} we have that $v_{\varepsilon_{n}}$ is bounded in $L^2(0,1)$ and then, there exists a subbsequence, also denoted by $v_{\varepsilon_{n}}$ which weakly converges for some limit $w \in L^2(0,1)$.

Now, from \eqref{lim21}, changing variables $x=y+\varepsilon_{n}z$ we obtain
 \begin{equation}\label{variablechange99}
 \begin{array}{l}
 \displaystyle 
\frac{C_{J,1}}{4\varepsilon_{n}^3} \int_{0}^{1} \int_{0}^{1} J\left(\frac{x-y}{\varepsilon_{n}}\right) (w_{\varepsilon_n}(y) - w_{\varepsilon_n}(x))^2 dy dx
 =\frac{C_{J,1}}{4} \int_{0}^{1} \int_{\frac{-y}{\varepsilon_n}}^{\frac{1-y}{\varepsilon_n}} J(z) \frac{(w_{\varepsilon_n}(y) - w_{\varepsilon_n}(y+\varepsilon_{n}z))^2}{\varepsilon_{n}^2} dz dy.
\end{array}
\end{equation}  
As the limit in \eqref{lim21} is zero, it follows that
\begin{equation}\label{boundeddifference}
\frac{C_{J,1}}{4} \int_{0}^{1} \int_{\frac{-y}{\varepsilon_n}}^{\frac{1-y}{\varepsilon_n}} J(z) \frac{(w_{\varepsilon_n}(y) - w_{\varepsilon_n}(y+\varepsilon_{n}z))^2}{\varepsilon_{n}^2} dz dy \leq C.
\end{equation}

So, thanks to \eqref{boundeddifference} and the weak convergence of $\{v_{\varepsilon_{n}}\}$ to $w$ in $L^2(0,1)$, by [\cite{andreu2010nonlocal}, Theorem 6.11] we have that $w \in H^1(0,1)$ and, moreover
\begin{equation}
\left(\frac{C_{J,1}}{4} J(z)\right)^{1/2} \frac{(w_{\varepsilon_n}(y) - w_{\varepsilon_n}(y+\varepsilon_{n}z))}{\varepsilon_{n}}  \rightharpoonup \left(\frac{C_{J,1}}{4} J(z)\right)^{1/2} z \cdot \frac{\partial w}{\partial {x}} (y)
\end{equation}
weakly in $L^2(0,1)\times L^2(\mathbb{R})$. Therefore taking the limit $\varepsilon_{n} \to 0$ in \eqref{variablechange99} we get, 
\begin{equation}\label{2a}
\frac{1}{2}\int_{0}^{1} \Big| \frac{\partial w}{\partial {x}}\Big|^2 =0.
\end{equation}
Hence $w=A_2$ is just a constant.

Finally, from \eqref{lim31}, taking $\varepsilon_{n} \to 0$ and by the Monotone Convergence Theorem, we obtain that $A_{1}=A_{2}$. Moreover, from equation \eqref{Ia} we get that $A_{1}+A_{2}=0$ which contradicts \eqref{IIa}.
\end{proof}

\section{The local/nonlocal problem} \label{sect-proofs}
\subsection{Existence and uniqueness}
Now, our goal is to show the existence and uniqueness of solutions. The main idea to prove this result is, given a function $u$ defined for $x \in [-1,0]$ we will use it as an initial input for the equation \eqref{nonlocal} in $[0,1]$. The solution $v$ of this problem is then used to solve the equation \eqref{local} in $[-1,0]$, which yields a function $z$. This procedure in two steps can be regarded as an operator $H$ given by $H(u)=z$. Now our task is
to look for a fixed point of $H$ via contraction in an adequate norm, meaning that, there must exists $u=H(u)$, solving the equation for $x \in [-1,0]$ with its corresponding $v$ solving the equation for $x \in [0,1]$.

Fix $T>0$ and consider the Banach spaces
\begin{equation*}
X_{T}= \left\{u \in C([-1,0]\times [0,T])\right\} \quad \text{and} \quad Y_{T}= \left\{v \in C([0,1]\times [0,T])\right\},
\end{equation*}
with the respective norms
\begin{equation*}
\parallel u \parallel_{l} = \max_{t \in [0,T]} \max_{x \in [-1,0]} |u| \quad \text{and} \quad \parallel v \parallel_{nl} = \max_{t \in [0,T]} \max_{x \in [0,1]} |v| .
\end{equation*}
Given $T>0$, we define the operator $H_{1}:X_{T} \to Y_{T}$ as $H_{1}(u)=v$, where $v$ is the unique solution of 
\begin{align}\label{FPnonlocal}
\begin{cases}
 \displaystyle \frac{\partial v}{\partial t}(x,t) = C_{J,1}\int_{0}^{1} J(x-y)\left(v(y,t)-v(x,t) \right)dy  - C_{J,2}\int_{-1}^{0}J(x-y)dy \, v(x,t) 
  +  C_{J,2}u(t,0)\int_{-1}^{0}J(x-y)dy,  \\[10pt]
     v(0,x)  =v_{0}(x), 
      \end{cases}
     \end{align} 
for $x \in (0,1)$ and $t \in (0,T)$.

In the next lemma we will show that this problem has an unique solution (that means that $H_1$ is well defined). 
In addition, we show continuous dependence on $u$.

\begin{lemma}
There are constants $C_{J,i}$, $i=1,2$, depending only on $J$, such that for $T$ $\in$ $\left(0,\frac{1}{2C_{J,1}+C_{J,2}}\right)$,
given $u(x,t)$ $\in$ $C([-1,0]\times [0,T])$ and $v_{0}$ $\in C([0,1])$, there exists an unique 
$v(x,t) \in C([0,1]\times [0,T])$, solution to \eqref{nonlocal}. Moreover, if $v_{1}$ and $v_{2}$ are the solutions corresponding to $u_{1}$ and $u_{2}$ then 
\begin{equation}\label{datadependence1}
\parallel v_{1}-v_{2}\parallel_{nl} \leq \frac{C_{J,2}T}{1-(2 C_{J,1}+C_{J,2})T}\parallel u_{1}-u_{2}\parallel_{l}.
\end{equation}
\end{lemma}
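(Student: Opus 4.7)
The plan is to recast \eqref{FPnonlocal} as a fixed point problem on $Y_T$ via Duhamel's formulation and apply the Banach contraction principle. Define the operator $T_u : Y_T \to Y_T$ by
\begin{equation*}
(T_u v)(x,t) := v_{0}(x) + \int_{0}^{t} \mathcal{L}[v,u](x,s)\, ds,
\end{equation*}
where $\mathcal{L}[v,u](x,s)$ denotes the right-hand side of \eqref{FPnonlocal}. One first checks that $T_u$ maps $Y_T$ into itself: continuity in $(x,t)$ follows from continuity of $v$, $u$, and of $\int_{0}^{1}J(x-y)\,dy$ and $\int_{-1}^{0}J(x-y)\,dy$ in $x$ (the kernel $J$ is continuous, $v_0$ is continuous, and the time integral is absolutely continuous in $t$). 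A fixed point of $T_u$ is precisely a solution of the integrated form of \eqref{FPnonlocal}, which, since the time derivative of a fixed point is continuous, is equivalent to the original problem.

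Next I would estimate $|T_u v_1 - T_u v_2|$ pointwise. Using $\int_{0}^{1}J(x-y)\,dy\le 1$ and $\int_{-1}^{0}J(x-y)\,dy\le 1$, the nonlocal dissipation part gives
\begin{equation*}
\Big|C_{J,1}\!\int_{0}^{1}\!J(x-y)\bigl[(v_1-v_2)(y,s)-(v_1-v_2)(x,s)\bigr] dy\Big| \le 2C_{J,1}\|v_1-v_2\|_{nl},
\end{equation*}
while the coupling potential term contributes at most $C_{J,2}\|v_1-v_2\|_{nl}$. Integrating over $s\in[0,t]$ with $t\le T$ yields
\begin{equation*}
\|T_u v_1 - T_u v_2\|_{nl} \le (2C_{J,1}+C_{J,2})\,T\,\|v_1-v_2\|_{nl}.
\end{equation*}
For $T<1/(2C_{J,1}+C_{J,2})$ this is a strict contraction on $Y_T$, so Banach's theorem yields a unique fixed point, i.e.\ a unique solution $v$ of \eqref{FPnonlocal}. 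Hence $H_1$ is well-defined.

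For the continuous dependence estimate, let $v_i := H_1(u_i)$, $i=1,2$. Subtracting the integral equations and grouping terms gives
\begin{equation*}
|v_1(x,t)-v_2(x,t)| \le \int_{0}^{t}\!\Big(2C_{J,1}+C_{J,2}\Big)\|v_1-v_2\|_{nl}\, ds + C_{J,2}\!\int_{0}^{t}\!|u_1(0,s)-u_2(0,s)|\int_{-1}^{0}\!J(x-y)\,dy\, ds,
\end{equation*}
so, using again $\int_{-1}^{0}J(x-y)\,dy\le 1$ and taking the supremum over $(x,t)\in[0,1]\times[0,T]$,
\begin{equation*}
\|v_1-v_2\|_{nl} \le (2C_{J,1}+C_{J,2})\,T\,\|v_1-v_2\|_{nl} + C_{J,2}\,T\,\|u_1-u_2\|_{l}.
\end{equation*}
Since the coefficient $(2C_{J,1}+C_{J,2})T<1$, rearranging gives the claimed bound \eqref{datadependence1}.

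The main technical point is really just the bookkeeping: both the existence/uniqueness and the Lipschitz constant in $u$ stem from the same contraction estimate, so the restriction $T<1/(2C_{J,1}+C_{J,2})$ serves simultaneously to close the Banach fixed point argument and to make the coefficient in front of $\|v_1-v_2\|_{nl}$ on the right strictly less than one. There is no real analytic obstacle beyond choosing the correct norm (uniform in $(x,t)$) and using the normalization $\int J=1$ to control the $x$-integrals of $J$ by $1$.
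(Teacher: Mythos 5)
Your proposal is correct and follows essentially the same route as the paper: both recast \eqref{FPnonlocal} as an integral fixed-point equation on $Y_T$, obtain the contraction constant $(2C_{J,1}+C_{J,2})T$ using $\int J\le 1$, and derive \eqref{datadependence1} by subtracting the two fixed-point identities and absorbing the $\|v_1-v_2\|_{nl}$ term. No substantive differences to report.
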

\begin{proof}
To show the existence and uniqueness we will use a fixed point argument. Let us define an operator $A_{u}(v): Y_{T} \to Y_{T}$ as
\begin{align*}
& A_{u}(v)(t,x)  :=v_{0}(x)+C_{J,1}\int_{0}^{t}\int_{0}^{1}J(x-y)(v(y,s)-v(x,s))dyds \\[10pt]
& \qquad\qquad\qquad\qquad - C_{J,2}\int_{0}^{t}\int_{-1}^{0}J(x-y)v(x,s)dy ds+C_{J,2}\int_{0}^{t}\int_{-1}^{0}J(x-y)u(0,s)dyds.
\end{align*}
Taking the difference $A_{u}(v_{1})-A_{u}(v_{2})$ we get
\begin{align*}
\displaystyle
\parallel A_{u}(v_{1})-A_{u}(v_{2})\parallel_{nl} &\leq C_{J,1} \max_{t \in [0,T]}\max_{x \in [0,1]} \int_{0}^{t}\int_{0}^{1}J(x-y)|v_{1}(y,s)-v_{2}(y,s)|dyds \\[10pt]
& \qquad + C_{J,1} \max_{t \in [0,T]}\max_{x \in [0,1]} \int_{0}^{t}\int_{0}^{1}J(x-y)|v_{2}(x,s)-v_{1}(x,s)|dyds \\[10pt]
& \qquad + C_{J,2} \max_{t \in [0,T]}\max_{x \in [0,1]} \int_{0}^{t}\int_{-1}^{0}J(x-y)|v_{2}(x,s)-v_{1}(x,s)|dyds.
\end{align*}
Since $J \geq 0$ and $\int_{\mathbb{R}}J=1$, applying Fubini's theorem, we obtain
\begin{equation*}
\parallel A_{u}(v_{1})-A_{u}(v_{2})\parallel_{nl} \leq (2 C_{J,1}+C_{J,2})T\parallel v_{1}-v_{2}\parallel_{nl}.
\end{equation*}
Choosing $T<\frac{1}{2C_{J,1}+C_{J,2}}$, $A_{u}$ is a strict contraction, and hence it has an unique fix point.

To check the dependence on the data, since $v_{1}=A_{u_{1}}(v_{1})$ and $v_{2}=A_{u_{2}}(v_{2})$, if we follow the same idea, we will get
\begin{equation*}
\parallel v_{1}-v_{2} \parallel_{nl} \leq (2 C_{J,1}+C_{J,2})T\parallel v_{1}-v_{2}\parallel_{nl}+C_{J,2}T \parallel u_{1}-u_{2} \parallel_{l},
\end{equation*}
which yields \eqref{datadependence1} and it completes the proof.
\end{proof}

\begin{remark} \label{rem-ft} {\rm We also have existence and uniqueness in $L^2$, that is, given $u(x,t)\in L^2([-1,0]\times [0,T])$ and $v_{0}\in L^2([0,1])$, there exists an unique $v(t,x) \in L^2([0,1]\times [0,T])$, solution to \eqref{nonlocal}. The proof is analogous and hence we omit the details.

In addition, we have a comparison principle, if we have two ordered functions $u \geq \tilde{u}$ and two initial conditions $v_0 \geq \tilde{v_0}$ then
the corresponding solutions verify $v(x,t) \geq \tilde{v} (x,t)$.
}\end{remark}

Now, we need to look back to the local part. Given $v \in C([0,1]\times [0,T])$, we will show that there exists a unique solution $u \in C([-1,0]\times [0,T])$ to \eqref{local}, with $u_0$ as initial condition. We define $H_{2}: Y_{T} \to X_{T}$ as 
the solution operator $H_{2}(v)=u$ and again we prove continuity of this operator.

\begin{lemma} Fix $T>0$.
Given $v(x,t)$ $\in$ $C([0,1]\times [0,T])$ and $u_{0}\in C([-1,0])$, there exists an unique $u(x,t)\in C([-1,0]\times [0,T])$, solution to \eqref{local}. 
Moreover, if $u_{1}$ and $u_{2}$ are the solutions corresponding to $v_{1}$ and $v_{2}$ then 
\begin{equation}\label{datadependence1.99}
\parallel u_{1}-u_{2}\parallel_{l} \leq C_{2} \parallel v_{1}-v_{2}\parallel_{l}.
\end{equation}
\end{lemma}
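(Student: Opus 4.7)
The plan is to exploit the fact that once $v$ is given, \eqref{local} is a linear heat equation on $(-1,0)$ with Neumann data at $x=-1$ and Robin data with a prescribed source at $x=0$. To expose this structure, set
$$\alpha := C_{J,2}\int_{-1}^{0}\int_{0}^{1} J(x-y)\,dy\,dx,\qquad g_v(t) := C_{J,2}\int_{-1}^{0}\int_{0}^{1} J(x-y)\,v(y,t)\,dy\,dx.$$
The boundary condition at $x=0$ then reads $u_x(0,t) + \alpha\, u(0,t) = g_v(t)$, and because $J$ is continuous and $v\in C([0,1]\times[0,T])$, we have $g_v\in C([0,T])$ with $\|g_v\|_{L^\infty([0,T])}\leq \alpha \|v\|_{nl}$.

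Existence and uniqueness of $u\in C([-1,0]\times[0,T])$ follow from classical linear parabolic theory. One concrete route: the operator $L u := -u_{xx}$ with domain $D(L)=\{u\in H^2(-1,0)\colon u'(-1)=0,\ u'(0)+\alpha\, u(0)=0\}$ is nonnegative and self-adjoint on $L^2(-1,0)$ with compact resolvent, hence generates an analytic contraction semigroup $e^{-tL}$. Choose any fixed $\psi\in C^2([-1,0])$ with $\psi'(-1)=0$ and $\psi'(0)+\alpha\,\psi(0)=1$, and write $u(x,t)=g_v(t)\psi(x)+z(x,t)$; then $z$ satisfies an inhomogeneous problem with \emph{homogeneous} boundary conditions, uniquely solvable by Duhamel's formula, and continuity of $z$ on $[-1,0]\times[0,T]$ follows from the analyticity of $e^{-tL}$ combined with continuity of $g_v$ (a density argument reduces the case of merely continuous $g_v$ to smooth approximations). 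Alternatively, the explicit Green's function for the mixed Neumann/Robin problem on $(-1,0)$ turns \eqref{local} into a Volterra integral equation for $u(0,t)$ with a continuous kernel, uniquely solvable by Picard iteration on $[0,T]$. Uniqueness may also be obtained from the comparison principle indicated in Remark \ref{rem-ft}.

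For \eqref{datadependence1.99}, let $u_i$ be the solutions corresponding to $v_i$. The difference $\tilde u := u_1-u_2$ satisfies the same IBVP with zero initial data and boundary source $\tilde g := g_{v_1}-g_{v_2}$, with $\|\tilde g\|_{L^\infty([0,T])}\leq \alpha\|v_1-v_2\|_{nl}$. By linearity, the representation formula for $\tilde u$ gives
$$\|\tilde u\|_l \;=\; \|\tilde u\|_{L^\infty([-1,0]\times[0,T])} \;\leq\; C_2\,\|v_1-v_2\|_{nl},$$
with $C_2$ depending only on $T$, $\alpha$ and on the fixed lifting $\psi$, which is precisely \eqref{datadependence1.99}.

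The main obstacle is obtaining the dependence estimate in the uniform norm, since the natural energy identity $\tfrac12\tfrac{d}{dt}\|\tilde u\|_{L^2}^2 + \|\tilde u_x\|_{L^2}^2 + \alpha\,\tilde u(0,\cdot)^2 = \tilde g\,\tilde u(0,\cdot)$ yields only an $L^2$-bound, whereas $X_T$ is equipped with the sup-norm. The analyticity/smoothing of $e^{-tL}$ (or, equivalently, a weak maximum principle adapted to the Robin boundary condition) is needed to bridge this gap, but is entirely standard. All constants depend on $T$, but since $T>0$ is fixed this is acceptable; in particular, no smallness restriction on $T$ is required here, in contrast to the previous lemma where the contraction needed $T<1/(2C_{J,1}+C_{J,2})$.
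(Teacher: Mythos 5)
Your reduction of \eqref{local} to a heat equation with Neumann data at $x=-1$ and the Robin condition $u_x(0,t)+\alpha\,u(0,t)=g_v(t)$ at $x=0$, together with the observation $\|g_v\|_{L^\infty}\leq\alpha\|v\|_{nl}$, is exactly the right structure, and your treatment of existence and uniqueness matches the paper, which likewise just invokes classical parabolic theory. The gap is in the estimate \eqref{datadependence1.99}, which is the entire substance of the paper's proof and which you ultimately assert rather than derive. The one representation you actually write down --- the lifting $u=g_v(t)\psi(x)+z$ followed by Duhamel --- does not by itself give a sup-norm bound in terms of $\|g_v\|_{L^\infty}$ alone: the forcing for $z$ contains $g_v'(t)\psi$, and $g_v$ is merely continuous; integrating by parts trades $g_v'$ for $L e^{-(t-s)L}\psi$, and since $\psi\notin D(L)$ (by construction $\psi'(0)+\alpha\psi(0)=1\neq0$) the relevant operator bound behaves like $(t-s)^{-1}$, which is not integrable. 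A density argument does not repair this, because smoothing $g_v$ controls $\|g_v\|_{\infty}$ but not $\|g_v'\|_{\infty}$. Your parenthetical remark that a ``weak maximum principle adapted to the Robin boundary condition'' bridges the gap is correct in spirit, but that is precisely the nontrivial step: the paper devotes essentially its whole proof to constructing explicit super- and subsolutions $\pm(T+t)^{1/2}g\bigl(x/(T+t)^{1/2}\bigr)$, with $g$ engineered so that the barrier absorbs the Robin term, and then concludes $|u_1-u_2|\leq C_2\|v_1-v_2\|_{nl}$ pointwise by comparison, with the explicit constant $C_2=C_{J,2}/2$ of \eqref{constant}.

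A second, quantitative point: the value and $T$-dependence of $C_2$ matter downstream. In the proof of the existence theorem the composition $H_2\circ H_1$ is a contraction because $C_2\cdot C_{J,2}T/(1-(2C_{J,1}+C_{J,2})T)<1$ for $T$ small, which requires $C_2$ to remain bounded as $T\to0$. Your constant, ``depending on $T$, $\alpha$ and $\psi$,'' is usable there only after verifying this boundedness; the paper's barrier (or a Hopf-lemma argument at the positive maximum on $\{x=0\}$, which yields $\max|u_1-u_2|\leq\alpha^{-1}\|g_{v_1}-g_{v_2}\|_{\infty}$) produces a $T$-independent constant directly. So the overall strategy is salvageable, but the uniform-norm dependence estimate is the lemma's actual content and must be carried out --- via a barrier/maximum-principle argument or a genuinely quantitative Green's-function computation --- rather than cited as standard.
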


\begin{proof}
It is well known, see \cite{evans}, that, given $v(t,x)$ $\in$ $C([0,1]\times [0,T])$ and $u_{0}\in C([-1,0])$, the problem \eqref{local} has an unique solution
$u(t,x)\in C([-1,0]\times [0,T])$. 
Therefore, the operator $H_2$ is well defined.

To show the bound \eqref{datadependence1.99} we will use a comparison argument.

Before we start with the argument, we will make some observations that can simplify our problem. First, note that, due to the symmetry
of the kernel and the fact that $\int_{-1}^{1}J(r)dr=1$, we have
\begin{equation}\label{constant}
C_{2}=C_{J,2}\int_{-1}^{0}\int_{0}^{1}J(x-y) dydx = \frac{C_{J,2}}{2}.
\end{equation}

Now, to obtain the estimate, let us consider $ z=u_{1}-u_{2}$, where both $u_{1}$ and $u_{2}$ satisfy \eqref{local} with the same initial condition $u_{0}(x)$ and two different functions $v_{1}$ and $v_{2}$, respectively. Then $z(x,t)$ is a solution to the following problem,
\begin{align}\label{variablez}
\begin{cases}
 \displaystyle   \frac{\partial z}{\partial t}(x,t)  =  \frac{\partial^2 z}{\partial x^2}(x,t),  \\[10pt]
 \displaystyle   \frac{\partial z}{\partial x}(-1,t) =  0,  \\[10pt]
  \displaystyle   \frac{\partial z}{\partial x}(0,t) = c_{J,2}\int_{-1}^{0}\int_{0}^{1}J(x-y)\left[v_1(y,t)-v_2(y,t)-(u_1(0,t)-u_{2}(0,t))\right]dydx, \\[10pt]
     z(x,0)  = 0.
    \end{cases}
    \end{align}

Using \eqref{constant}, we obtain 
\begin{align*}
& \left|   C_{J,2}\int_{-1}^{0}\int_{0}^{1}J(x-y)(v_{1}(y,t)-v_{2}(y,t))dydx\right| \\
& \qquad \leq C_{J,2}\int_{-1}^{0}\int_{0}^{1}J(x-y) |v_{1}(y,t)-v_{2}(y,t)|dy dx \\\nonumber
& \qquad \leq C_{J,2}\int_{-1}^{0}\int_{0}^{1}J(x-y) \max_{t \in [0,T]} \max_{y \in [0,1]}\Big|v_{1}(y,t)-v_{2}(y,t)\Big|dy dx \\\nonumber
& \qquad = C_{2} \parallel v_{1}-v_{2} \parallel_{nl}.
\end{align*}

Hence, if we define 
\begin{equation}\label{variablechange}
w(t,x) = \frac{z(t,x)}{ C_{2} \parallel v_{1}-v_{2} \parallel_{nl}},
\end{equation}
the solution $w(x,t)$ satisfies the following problem
\begin{align}\label{newlocal}
\begin{cases}
 \displaystyle   \frac{\partial w}{\partial t}(x,t)  =  \frac{\partial^2 w}{\partial x^2}(x,t),  \\[10pt]
 \displaystyle   \frac{\partial w}{\partial x}(-1,t) =  0,  \\[10pt]
  \displaystyle   \frac{\partial w}{\partial x}(0,t)  \leq - C_{2} w(0,t) + 1 \quad \text{and} \quad 
 \displaystyle   \frac{\partial w}{\partial x}(0,t)  \geq - C_{2} w(0,t)- 1, \\[10pt]
     w(x,0)  = 0.
    \end{cases}
    \end{align}

Then, to obtain the desired estimates, we focus on to obtain bounds for $w$, a function that verifies the problem \eqref{newlocal}, using the comparison principle. 
Recall that a function $\overline{w}(x,t)$ is called a supersolution for the problem \eqref{newlocal} if it satisfies
\begin{align*}\label{newlocal}
\begin{cases}
 \displaystyle   \frac{\partial \overline{w}}{\partial t}(x,t)  \geq  \frac{\partial^2 w}{\partial x^2}(x,t),  \\[10pt]
 \displaystyle   \frac{\partial \overline{w} }{\partial x}(-1,t) \geq  0,  \\[10pt]
  \displaystyle   \frac{\partial \overline{w}}{\partial x}(0,t)  \leq - C_{2} w(0,t) + 1, \\[10pt]
     \overline{w} (x,0)  = 0.
    \end{cases}
    \end{align*}
Respectively, a function $\underline{w}(x,t)$ is called a subsolution if, it satisfies the reverse inequalities.

Let us introduce an auxiliary function. Given $\xi < 0$ and $0<a<1$ we can define
\begin{equation}\label{auxiliaryfunction}
g(\xi)=\frac{1}{a}f(\xi a), \quad \text{with} \quad g'(0)=f'(0)=1.
\end{equation}
Here, the function $f$ is chosen such that the following conditions hold:

Given $\xi_{0}>1$, $f$ is increasing in $(-\xi_{0},0]$, $C^2(-\xi_{0},0)$, and $f\equiv 1$ in $(- \infty, -\xi_{0}]$. 

Let us fix $T<\frac{a^2}{2\xi_{0}^2}$. For each $t \in [0,T]$ and $x \in [-1,0]$ we define
\begin{equation*}
\overline{w}(x,t)=(T+t)^{1/2}g\left(\frac{x}{(T+t)^{1/2}}\right),
\end{equation*} 
as $g$ given by \eqref{auxiliaryfunction}.

We aim to verify that $\overline{w}$ is a supersolution for \eqref{newlocal}. 

\begin{itemize}
\item[i)] We want to prove that $$\frac{\partial \overline{w}}{\partial t} \geq \frac{\partial^2 \overline{w}}{\partial x^2}.$$
Differentiating $\overline{w}$ with respect to $t$ and $x$, 
we obtain that it is enough to have
$$
\frac{1}{2} g\left(\frac{x}{(T+t)^{1/2}}\right) - \frac{1}{2}x(T+t)^{-1/2} g'\left(\frac{x}{(T+t)^{1/2}}\right) \geq g''\left(\frac{x}{(T+t)^{1/2}}\right).
$$
Observe that, since $x \in [-1,0]$ and $g'\left(\frac{x}{(T+t)^{1/2}}\right)>0$, we only need to verify that
\begin{equation}\label{ineq-I}
\frac{1}{2} g\left(\frac{x}{(T+t)^{1/2}}\right)\geq g''\left(\frac{x}{(T+t)^{1/2}}\right).
\end{equation}
To deal with this, let us call $\eta= \frac{x}{(T+t)^{1/2}}$. According to the definition of $g$, to prove \eqref{ineq-I} is equivalent to prove
 \begin{equation}\label{ineq-Ia}
\frac{1}{2a}f(a\eta)\geq a f''(a\eta).
\end{equation}
We know that, for each $\xi \leq 0$ and $0<a<1$,
\begin{align*}
\frac{f(\xi a)}{2}=
\begin{cases}
 \displaystyle   1/2, \qquad \text{if} \ \xi a<-\xi_{0},  \\[10pt]
 \displaystyle   \frac{f(\xi a)}{2}, \qquad \text{if} \ -\xi_{0} \leq \xi a<0.  
    \end{cases}
    \end{align*}
Moreover, as $f \in C^2(-\xi_{0},0)$ and increasing in the same interval, we obtain
\begin{align*}
f''(\xi a) \leq
\begin{cases}
 \displaystyle   0, \qquad \text{if} \ \xi a<-\xi_{0},  \\[10pt]
 \displaystyle   M, \qquad \text{if} \ -\xi_{0} \leq \xi a<0,  \\
    \end{cases}
    \end{align*}
where $M=\max_{-\xi_{0}\leq \xi \leq 0}|f''(\xi)|$. 

Hence, given $M$, we can choose $0<a<1$ in order to have the estimate $\frac{1}{2}\geq Ma^{2}$. With this in mind we are able to  verify \eqref{ineq-Ia}. Indeed, 
\begin{itemize}
\item[a)] If $ -\xi_{0} \leq \xi a<0$, it follows that
$$
\frac{f(\xi a)}{2}\geq \frac{1}{2}\geq Ma^{2}\geq f''(\xi a) a^{2}.
$$
\item[b)] If $\xi a<-\xi_{0}$, we have that
$$
\frac{f(\xi a)}{2}\geq \frac{1}{2}\geq 0 \geq f''(\xi a) a^{2}.
$$
\end{itemize}

\item[ii)] We want to verify that $\overline{w}$ satisfies $$\frac{\partial \overline{w}}{\partial x}(-1,t) \leq 0.$$ 
At $x=-1$ we have,
\begin{equation}\label{iicondition}
\frac{\partial \overline{w}}{\partial x}(-1,t) =g'\left(\frac{-1}{(T+t)^{1/2}}\right)=f'\left(\frac{-a}{(T+t)^{1/2}}\right).
\end{equation}
We know that $f\equiv 1$ in $(-\infty, -\xi_{0})$. Then, taking $T<\frac{a^2}{2\xi_{0}^2}$, we obtain that $\frac{-a}{(T+t)^{1/2}}<-\xi_{0}$ and therefore 
$$f'\left(\frac{-a}{(T+t)^{1/2}}\right)=0$$ and we obtain \eqref{iicondition} as desired.

\item[iii)] We want to check that $$\frac{\partial \overline{w}}{\partial x}(0,t) \geq - C_{2}\overline{w}(0,t)+1.$$ 
Differentiating $\overline{w}$ with respect to $x$, we need to prove that it holds that
$$
g'(0) \geq - C_{2}(T+t)^{1/2} g(0)+1.
$$
As $g'(0)=f'(0)=1$ and $g(0)\geq 1$, we get 
$$1 \geq - C_{2}(T+t)^{1/2} g(0)+1,$$
which prove the item $iii).$

 \item[iv)] Finally, we aim to verify that $\overline{w}(x,0) \geq 0$. 
 
 Indeed, we have
 \begin{equation*}
 \overline{w}(x,0) = (T)^{1/2}\underbrace{g\left(\frac{x}{T^{1/2}}\right)}_{>0}>0.
 \end{equation*}
\end{itemize}

With these four items we proved that $\overline{w}$ is a supersolution of \eqref{newlocal}. 

The fact that $$\underline{w}(t,x)=-(T+t)^{1/2}g\left(\frac{x}{(T+t)^{1/2}}\right)$$ is a subsolution for the problem \eqref{newlocal}
can be proved analogously.

So, by the comparison principle, the solution $w(x,t)$ of the problem \eqref{newlocal}, verifies
$$
\underline{w}(x,t)\leq w(x,t)\leq \overline{w}(x,t).
$$ 
Hence, we get the estimate
$$
|\overline{w}(x,t)| \leq \max_{x \in [-1,0]} \max_{t \in [0, T]}\left|(T+t)^{1/2}g\left(\frac{x}{(T+t)^{1/2}}\right)\right|= \frac{1}{a}(2T)^{1/2}< \frac{1}{\xi_{o}}<1. 
$$

Therefore, going back to our original variable $z$ we have obtained that
$$
 \frac{|z(x,t)|}{ C_{2} \parallel v_{1}-v_{2} \parallel_{nl}} = \frac{|u_{1}-u_{2}|}{ C_{2} \parallel v_{1}-v_{2} \parallel_{nl}}=w(x,t) \leq \overline{w} (x,t) \leq 1,
$$
which implies that
$$
||u_{1}-u_{2}||_{l} \leq C_{2} \parallel v_{1}-v_{2} \parallel_{nl}.
$$
The proof is complete.
\end{proof}

\begin{remark} \label{rem-km} {\rm In this case we also have existence and uniqueness in $L^2$, as in Remark \ref{rem-ft}. 
Given $v(x,t)\in L^2([0,1]\times [0,T])$ and $u_{0}\in L^2([-1,0])$, there exists an unique $u(x,t) \in C^1([-1,0];L^2[0,T])$, solution to \eqref{local}.

Again, we have a comparison principle, if we have two ordered functions $v \geq \tilde{v}$ and two initial conditions $u_0 \geq \tilde{u_0}$ then
the corresponding solutions verify $u(x,t) \geq \tilde{u} (x,t)$.
}\end{remark}

Finally, combining the two lemmas, we get the following theorem.

\begin{theorem}
Given $w_{0} \in C([-1,1])$ (or given $w_{0} \in L^2([-1,1])$), there exists a unique solution to problem \eqref{local}--\eqref{nonlocal}, which has $w_{0}$ as initial condition.
\end{theorem}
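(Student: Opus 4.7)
The plan is to combine the two preceding lemmas into a single Banach fixed point argument for the composition $H = H_2 \circ H_1 : X_T \to X_T$. Given $u \in X_T$, the first lemma produces $v = H_1(u) \in Y_T$ solving \eqref{nonlocal}, and then the second lemma produces $z = H_2(v) \in X_T$ solving \eqref{local} with the Robin-type condition driven by this $v$. A fixed point $u = H(u)$ is exactly a pair $(u,v)$ jointly solving \eqref{local}--\eqref{nonlocal}.

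First I would fix $T > 0$ small enough so that the first lemma applies (so $T < 1/(2C_{J,1}+C_{J,2})$). Chaining the two Lipschitz estimates \eqref{datadependence1} and \eqref{datadependence1.99} gives
\begin{equation*}
\| H(u_1) - H(u_2)\|_{l} \;\leq\; C_2 \cdot \frac{C_{J,2}\, T}{1 - (2C_{J,1}+C_{J,2})T}\;\| u_1 - u_2\|_{l},
\end{equation*}
so I would then shrink $T$ further, depending only on $J$ (recall $C_2 = C_{J,2}/2$), so that the prefactor is strictly less than $1$. The Banach contraction principle then yields a unique fixed point $u \in X_T$, and setting $v = H_1(u) \in Y_T$ delivers the desired unique local-in-time solution to \eqref{local}--\eqref{nonlocal} with initial datum $w_0 = (u_0,v_0)$.

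To pass from local to global existence I would iterate the construction. The critical observation is that the contraction time $T$ depends only on $J$ and the coupling constants, and not on the size of the data; therefore, taking $(u(\cdot,T), v(\cdot,T)) \in C([-1,0]) \times C([0,1])$ as a new initial condition, the same argument extends the solution to $[T,2T]$, and so on, yielding a globally defined solution on $[0,\infty)$. Uniqueness on each slab, combined with the standard gluing, upgrades to global uniqueness.

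For the $L^2$ statement, I would run the identical scheme in the Banach spaces $L^2([-1,0]\times[0,T])$ and $L^2([0,1]\times[0,T])$ instead of the continuous ones, using the $L^2$ versions of the two lemmas noted in Remarks \ref{rem-ft} and \ref{rem-km}; the contraction estimates have exactly the same structure. The main (and essentially only) subtle point is ensuring that $H_2$ maps into $X_T$ when the input $v$ lives in $Y_T$ and that the composition contracts in a norm compatible with both solvers — this is precisely what the two preceding lemmas were designed to provide, so no new analytic difficulty should appear here; the proof is a routine assembly of the pieces already in place.
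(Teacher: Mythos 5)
Your proposal is correct and follows essentially the same route as the paper: compose $H=H_2\circ H_1$, chain the Lipschitz estimates \eqref{datadependence1} and \eqref{datadependence1.99} to get a contraction for $T$ small depending only on $J$, apply Banach's fixed point theorem, and iterate using that $T$ is independent of the data to obtain the global solution (with the $L^2$ case handled by the remarks). No substantive difference from the paper's argument.
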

\begin{proof}
Let $T\in \left(0,\frac{1}{2C_{J,1}+C_{J,2}}\right)$. 
We consider the operator $H:X_T \mapsto X_T$ given by 
$$H(u):=H_{2}(H_{1}(u)) = H_2 (v),$$ 
and we obtain, from our previous results,
\begin{equation*}
\parallel H_{2}(H_{1}(u_1)) - H_{2}(H_{1}(u_2)) \parallel_{l} = \parallel H_2 (v_1)- H_2(v_2) \parallel_{l} \leq C_2 \parallel v_{1}-v_{2} \parallel_{nl} \leq C_2 \frac{C_{J,2} T}{1- (2 C_{J,1}+C_{J,2})T} \parallel u_{1}-u_{2} \parallel_{l}, 
\end{equation*}
which proves that $H$ is a strict contraction for $T$ small enough. Therefore, there is a fixed point
$$
u=H(u)
$$ 
that gives us a unique solution $(u,v=H_1(u))$ in $(0,T)$. 
Since $T$ can be chosen independently of the initial condition, the fixed point argument can be iterated to obtain a global solution for our problem.
\end{proof}

\subsection{Conservation of mass}
As we expected the model \eqref{complete} preserves the total mass of the solution.

\begin{theorem}
The solution $w$ of the problem \eqref{complete}, with initial condition $w_{0} \in C([-1,1])$ satisfies 
\begin{equation*}
\int_{-1}^{1}w(x,t) dx=\int_{-1}^{1}w_{0}(x) dx,\quad  \mbox{for every } t\geq 0.
\end{equation*}
\end{theorem}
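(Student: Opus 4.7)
The plan is to differentiate the total mass $M(t):=\int_{-1}^1 w(x,t)\,dx$ in time and show $M'(t)=0$ by combining the two evolution equations, the boundary conditions from \eqref{local}, Fubini's theorem, and the symmetry of $J$. Since the existence theorem provides a solution with enough regularity to differentiate under the integral (for continuous initial data; the $L^2$ case will follow by density using the continuous dependence already established), this reduces the whole statement to a book-keeping exercise on the right-hand sides of \eqref{local}--\eqref{nonlocal}.

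First I would split $M(t)=\int_{-1}^0 u(x,t)\,dx+\int_0^1 v(x,t)\,dx$. For the local part, the fundamental theorem of calculus together with the heat equation and the two boundary conditions in \eqref{local} give
\begin{equation*}
\frac{d}{dt}\int_{-1}^0 u(x,t)\,dx=\frac{\partial u}{\partial x}(0,t)-\frac{\partial u}{\partial x}(-1,t)=C_{J,2}\int_{-1}^{0}\!\int_{0}^{1}J(x-y)\bigl(v(y,t)-u(0,t)\bigr)\,dy\,dx.
\end{equation*}
For the nonlocal part, plugging \eqref{nonlocal} in and using Fubini yields
\begin{equation*}
\frac{d}{dt}\int_0^1 v(x,t)\,dx=C_{J,1}\!\int_0^1\!\int_0^1 J(x-y)\bigl(v(y,t)-v(x,t)\bigr)\,dy\,dx-C_{J,2}\!\int_0^1\!\int_{-1}^0 J(x-y)\bigl(v(x,t)-u(0,t)\bigr)\,dy\,dx.
\end{equation*}
The first double integral vanishes: swapping the names of $x$ and $y$ and using $J(x-y)=J(y-x)$, it equals its own negative.

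It then remains to show that the coupling contribution from the local side and the coupling contribution from the nonlocal side cancel. This is where the symmetry of $J$ enters again. Renaming variables in one integral and applying Fubini,
\begin{equation*}
\int_{-1}^{0}\!\int_{0}^{1}J(x-y)\bigl(v(y,t)-u(0,t)\bigr)\,dy\,dx=\int_{0}^{1}\!\int_{-1}^{0}J(y-x)\bigl(v(x,t)-u(0,t)\bigr)\,dy\,dx,
\end{equation*}
and by symmetry $J(y-x)=J(x-y)$, so this exactly matches the remaining term from $\frac{d}{dt}\int_0^1 v$ with the opposite sign. Adding everything gives $M'(t)\equiv 0$, and integrating in $t$ yields the claim.

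The only real subtlety is regularity: for continuous $u_0,v_0$ the local Neumann/Robin problem and the nonlocal ODE on $L^\infty$ are classical enough that differentiation under the integral sign and the use of the Neumann boundary values at $x=-1,0$ are legitimate. For merely $L^2$ initial data I would approximate $w_0$ by continuous data, use the continuous dependence in \eqref{datadependence1} and \eqref{datadependence1.99} (which give convergence in the mild-solution sense, hence in $L^2$ in space uniformly on $[0,T]$), and pass to the limit in $\int_{-1}^1 w(\cdot,t)=\int_{-1}^1 w_0$. I do not expect any genuine obstacle beyond this verification.
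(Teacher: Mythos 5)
Your proposal is correct and follows essentially the same route as the paper: differentiate the total mass, use the fundamental theorem of calculus with the Neumann/Robin boundary conditions on the local piece, kill the pure nonlocal double integral by the symmetry of $J$ and Fubini, and observe that the two coupling contributions cancel after renaming variables. The only additions beyond the paper's argument are the explicit regularity/density remarks, which are harmless since the theorem as stated only concerns continuous initial data.
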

\begin{proof}
First we observe that we have 
\begin{equation*}
\int_{-1}^{1}w(x,t)dx=\int_{-1}^{0}u(x,t)dx + \int_{0}^{1}v(x,t)dx,
\end{equation*}
and also for $w_{0}$
\begin{equation*}
\int_{-1}^{1}w_{0}=\int_{-1}^{0}u_{0} + \int_{0}^{1}v_{0}.
\end{equation*}

From the symmetry of the kernel and Fubbini's theorem, we obtain
\begin{align*}
\displaystyle 
& \frac{\partial}{\partial {t}}\left(\int_{-1}^{1}w(x,t)dx\right) \\[6pt]
& = \int_{-1}^{0} \frac{\partial^2 u}{\partial x^2}(x,t)dx+C_{J,1}\int_{0}^{1}\int_{0}^{1}J(x-y)(v(y,t)-v(x,t))dydx \\[6pt]
& \qquad -C_{J,2}\int_{0}^{1}\int_{-1}^{0}J(x-y)v(x,t)dydx+ C_{J,2}u(0,t)\int_{0}^{1}\int_{-1}^{0}J(x-y)dydx \\[6pt]
& =  \frac{\partial u}{\partial {x}}(0,t)-  \frac{\partial u}{\partial {x}}(-1,t)
 -C_{J,2}\int_{0}^{1}\int_{-1}^{0}J(x-y)v(x,t)dydx  + C_{J,2}u(0,t)\int_{0}^{1}\int_{-1}^{0}J(x-y)dydx \\[6pt]
& = 0.
\end{align*}
This shows that the total mass is independent of $t$.
\end{proof}

\subsection{Comparison principle}
Thanks to the linearity of the operator, if we have two solutions to the local/nonlocal problem \eqref{local}-\eqref{nonlocal}, then the difference is also a solution. 
From the fixed point construction of the solution, given a nonnegative initial data $u_{0}, v_{0}$, the solution
is nonnegative for every positive time (this follows from our construction of the solutions as a fixed point
and Remarks \ref{rem-ft} and \ref{rem-km}). Therefore, we have the following result:
\begin{proposition} Let $u_0 \geq \tilde{u_0}$ and $v_0 \geq \tilde{v_0}$ then the corresponding solutions 
to the local/nonlocal problem \eqref{local}--\eqref{nonlocal} verify
$$
u(x,t) \geq \tilde{u} (x,t), \qquad v(x,t) \geq \tilde{v} (x,t),
$$
for every $t\geq 0$.
\end{proposition}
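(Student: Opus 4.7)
The plan is to reduce the comparison principle to a positivity statement by exploiting linearity, and then to establish positivity using the fixed-point construction developed in the existence proof.

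First I would set $U := u - \tilde{u}$ and $V := v - \tilde{v}$. Since the system \eqref{local}--\eqref{nonlocal} is linear in $(u,v)$, the pair $(U,V)$ solves the same coupled problem with initial data $(U_0,V_0) = (u_0 - \tilde{u}_0, v_0 - \tilde{v}_0) \geq 0$. Hence it suffices to prove: \emph{if $u_0\geq 0$ and $v_0\geq 0$, then the solution $(u,v)$ is nonnegative for all $t\geq 0$.}

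To prove this, I would invoke the fixed-point construction in the previous subsection, where the solution was obtained as the unique fixed point of $H := H_2\circ H_1$ in $X_T$ via Banach's contraction principle. The key observation is that each of $H_1$ and $H_2$ preserves nonnegativity. For $H_1$, this is the content of Remark \ref{rem-ft}: given $u\geq 0$ on $[-1,0]\times[0,T]$ and $v_0\geq 0$, the unique solution $v$ of \eqref{FPnonlocal} is nonnegative, as one sees by rewriting \eqref{FPnonlocal} as $\partial_t v + a(x)v = b(x,t)$ with $a(x) = C_{J,1}\int_{0}^{1}J(x-y)\,dy + C_{J,2}\int_{-1}^{0}J(x-y)\,dy\geq 0$ and source $b(x,t) = C_{J,1}\int_{0}^{1}J(x-y)v(y,t)\,dy + C_{J,2}u(0,t)\int_{-1}^{0}J(x-y)\,dy \geq 0$, and applying Duhamel together with a standard continuity argument (so the set of times where $v\geq 0$ is closed, open and nonempty). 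For $H_2$ one appeals to Remark \ref{rem-km}: given $v\geq 0$ and $u_0\geq 0$, the classical parabolic maximum principle applied to the heat equation with Neumann condition at $x=-1$ and a Robin-type condition at $x=0$ whose source is nonnegative (by the symmetry of $J$ and $v\geq 0$) yields $u\geq 0$ on $[-1,0]\times[0,T]$.

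With these two preservation properties in hand, I would start the Picard iteration from the nonnegative initial guess $u^{(0)}\equiv 0$ and note, by induction, that each iterate $u^{(n+1)} = H(u^{(n)}) = H_2(H_1(u^{(n)}))$ is nonnegative. Since $u^{(n)}\to u$ in $X_T$ as $n\to\infty$, the limit $u$ is nonnegative on $[-1,0]\times[0,T]$, and then $v = H_1(u)\geq 0$ on $[0,1]\times[0,T]$ as well. Because $T$ can be chosen independently of the initial condition, iterating in time yields nonnegativity for every $t\geq 0$. The only mildly delicate step is the single-step positivity for $H_1$ and $H_2$; everything else is automatic from linearity and the contraction argument already used for existence, so the main obstacle is purely the verification of these two maximum principles.
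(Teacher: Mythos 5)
Your proposal is correct and follows essentially the same route as the paper: reduce by linearity to positivity preservation for nonnegative initial data, and obtain that from the fixed-point construction together with the order-preserving properties of $H_1$ and $H_2$ recorded in Remarks \ref{rem-ft} and \ref{rem-km}. You merely fill in more detail (the Duhamel/continuity argument for the nonlocal step and the Hopf-type boundary argument for the Robin condition) than the paper, which simply cites those remarks.
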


To go one step further, let us define what we understand by sub and supersolutions.

\begin{definition} {\rm
The functions $\overline{u}$ and $\overline{v}$ are called supersolutions of the problem \eqref{local}-\eqref{nonlocal}, in $ [-1,1]\times [0,T]$ if, $\overline{u},\overline{v}$ verify
\begin{align*}
\begin{cases}
 \displaystyle    \frac{\partial \overline{u}}{\partial {t}} (t,x) \geq   \frac{\partial^2 \overline{u}}{\partial {x^2}}(t,x), \quad x \in (-1,0), \quad t>0, \\[10pt]
 \displaystyle    \frac{\partial \overline{u}}{\partial {x}}(t,-1) \leq  0, \, t>0, \\[10pt]
  \displaystyle    \frac{\partial \overline{u}}{\partial {x}}(t,0) \geq  C_{J,2} \int_{-1}^{0}\int_{0}^{1}J(x-y) (\overline{v}(y,t)-\overline{u}(t,0))dydx, \quad t>0, 
  \\[10pt]
   \displaystyle    \frac{\partial \overline{v}}{\partial {t}}(t,x) \geq  C_{J,1} \int_{0}^{1}J(x-y) (\overline{v}(y,t)-\overline{v}(x,t))dy 
   -C_{J,2} \int_{-1}^{0}J(x-y) (\overline{v}(x,t)-\overline{u}(t,0))dy, \quad x \in (0,1), \, t>0,\\[10pt]
     \overline{u}(0,x)  \geq u_{0}(x), \quad x \in (-1,0), \qquad 
      \overline{v}(0,x)  \geq v_{0}(x), \quad x \in (0,1).
    \end{cases}
    \end{align*}    
As usual, subsolutions, $\underline{u},\underline{v}$, are defined analogously by reversing the inequalities. }
\end{definition}

\begin{theorem} Let $\overline{u}$ and $\overline{v}$ be supersolutions of the problem \eqref{local}-\eqref{nonlocal}.
If $(\overline{u}_{0},\overline{v}_{0}) \geq 0$ then 
$$\overline{u} (x,t) \geq 0, \qquad \mbox{and} \qquad \overline{v}(x,t)\geq 0,$$
for every $t>0$. 

Moreover, given $(\overline{u},\overline{v}) $ a supersolution and $(\underline{u},\underline{v})$ a subsolution with 
$\overline{u}_{0} \geq \underline{u}_{0}$ and $\overline{v}_{0} \geq \underline{v}_{0}$, then, 
$$\overline{u} (x,t) \geq \underline{u} (x,t) \qquad \mbox{and} \qquad 
\overline{v} (x,t) \geq \underline{v} (x,t), $$ 
for every $t>0$.
\end{theorem}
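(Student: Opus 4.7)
The plan is to reduce the second, comparison assertion to the first, nonnegativity assertion by linearity, and then prove the latter by a perturbation-and-first-time argument. Because the local heat equation, the nonlocal equation, the Neumann condition at $x=-1$ and the Robin-type interface condition at $x=0$ are all linear in $(u,v)$, a direct subtraction shows that the pair $(\overline u-\underline u,\overline v-\underline v)$ is itself a supersolution with nonnegative initial data; hence it suffices to prove that every supersolution $(\overline u,\overline v)$ with $\overline u_0,\overline v_0\geq 0$ stays nonnegative for every $t>0$.

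To this end I introduce the perturbation
\[
\tilde u_\varepsilon(x,t):=\overline u(x,t)+\varepsilon(1+t), \qquad \tilde v_\varepsilon(x,t):=\overline v(x,t)+\varepsilon(1+t), \qquad \varepsilon>0.
\]
Since the perturbation is spatially constant and identical on both subdomains, every spatial difference appearing in the supersolution inequalities, namely $\tilde v_\varepsilon(y,t)-\tilde v_\varepsilon(x,t)$ and $\tilde v_\varepsilon(x,t)-\tilde u_\varepsilon(0,t)$, agrees with the corresponding difference for $\overline v,\overline u$. Consequently $(\tilde u_\varepsilon,\tilde v_\varepsilon)$ satisfies the strict parabolic inequality $\partial_t\tilde u_\varepsilon-\partial_{xx}\tilde u_\varepsilon\geq\varepsilon$ on $(-1,0)\times(0,T)$, the analogous strict nonlocal inequality on $(0,1)\times(0,T)$ with an additional $+\varepsilon$ on its right-hand side, together with the unchanged Neumann and Robin-type inequalities and strictly positive initial data.

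Fix $T>0$ and suppose, for contradiction, that $\min\{\tilde u_\varepsilon,\tilde v_\varepsilon\}$ vanishes somewhere in $[-1,1]\times[0,T]$. By continuity let $t_0>0$ be the first such time and let $x_0$ be a minimizer; I split into four cases. \textbf{(a)} If $\tilde v_\varepsilon(x_0,t_0)=0$ with $x_0\in[0,1]$, then $\partial_t\tilde v_\varepsilon(x_0,t_0)\leq 0$, while both integrands on the right of the nonlocal inequality are nonnegative because $\tilde v_\varepsilon(y,t_0)\geq 0=\tilde v_\varepsilon(x_0,t_0)$ and $\tilde u_\varepsilon(0,t_0)\geq 0=\tilde v_\varepsilon(x_0,t_0)$; the strict inequality then forces $\partial_t\tilde v_\varepsilon(x_0,t_0)\geq\varepsilon>0$, a contradiction. \textbf{(b)} If $\tilde u_\varepsilon(x_0,t_0)=0$ with $x_0\in(-1,0)$, then $x_0$ is an interior spatial minimum so $\partial_{xx}\tilde u_\varepsilon(x_0,t_0)\geq 0$, which combined with $\partial_t\tilde u_\varepsilon(x_0,t_0)\leq 0$ contradicts $\partial_t\tilde u_\varepsilon-\partial_{xx}\tilde u_\varepsilon\geq\varepsilon$. \textbf{(c)} If $x_0=-1$, the parabolic Hopf lemma applied to $\tilde u_\varepsilon$ at the boundary minimum yields $\partial_x\tilde u_\varepsilon(-1,t_0)>0$, contradicting the Neumann supersolution inequality $\partial_x\tilde u_\varepsilon(-1,t_0)\leq 0$. \textbf{(d)} If $x_0=0$, the Robin-type interface inequality gives $\partial_x\tilde u_\varepsilon(0,t_0)\geq 0$ (its right-hand side being nonnegative since $\tilde v_\varepsilon(\cdot,t_0)\geq 0$ and $\tilde u_\varepsilon(0,t_0)=0$), while Hopf at $x=0$ (outward normal in the $+x$ direction) forces $\partial_x\tilde u_\varepsilon(0,t_0)<0$, again a contradiction. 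Therefore $\tilde u_\varepsilon,\tilde v_\varepsilon>0$ on $[-1,1]\times[0,T]$; sending $\varepsilon\to 0$ shows $\overline u,\overline v\geq 0$ on the same set, and since $T$ was arbitrary the claim follows.

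The main obstacle is the careful justification of the Hopf-type arguments in cases \textbf{(c)} and \textbf{(d)}, which require enough regularity of $\overline u$ to apply a parabolic Hopf lemma adapted to our Neumann and Robin-type conditions; under classical $C^{2,1}$ regularity these are standard, but for less regular sub/supersolutions one would need either to approximate $\overline u$ by smooth functions or to prove a Hopf lemma tailored to the coupled problem.
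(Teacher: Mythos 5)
Your proof is correct (for supersolutions regular enough that the boundary derivatives and a parabolic Hopf lemma make classical sense, which is implicitly the setting in which the paper states the sub/supersolution inequalities), but it follows a genuinely different route from the paper's. You share the first step: by linearity $(\overline u-\underline u,\overline v-\underline v)$ is a supersolution with nonnegative data, so everything reduces to nonnegativity of supersolutions. From there the paper argues variationally: it multiplies the supersolution inequalities by the negative parts $\varphi=w_-$, $\psi=z_-$ of $w=\overline u-\underline u$, $z=\overline v-\underline v$, integrates, uses the symmetry of $J$ and integration by parts to identify the resulting quantity with $-2E(w_-,z_-)$, and concludes $E(w_-,z_-)=0$, hence $w_-\equiv z_-\equiv 0$. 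Your argument is instead a pointwise first-touching-time/maximum-principle proof with the $\varepsilon(1+t)$ perturbation. The trade-offs: the energy method needs little regularity beyond what makes $E$ finite and the weak formulation meaningful, and it reuses the structure of $E$ already exploited elsewhere in the paper; your method is more elementary and entirely self-contained on the nonlocal side (cases (a) and (b) use nothing but continuity and the pointwise inequalities), but cases (c) and (d) genuinely require a parabolic Hopf lemma and hence classical regularity of $\overline u$ up to $x=-1$ and $x=0$, which you correctly flag. Two small points worth making explicit: (i) before invoking Hopf at $x_0\in\{-1,0\}$ you should record that case (b) already yields $\tilde u_\varepsilon>0$ at all interior points for $t\le t_0$, which is precisely the strict-interior-minimum hypothesis the Hopf lemma needs; (ii) in case (a) the minimizer $x_0$ may be an endpoint of $[0,1]$, where the nonlocal inequality is only available by continuity from $(0,1)$ --- harmless for continuous $\overline v$, but worth a sentence.
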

\begin{proof}
Let us define
\begin{align*}
\begin{cases}
\displaystyle w = \overline{u}-\underline{u}, \\
\displaystyle z = \overline{v}-\underline{v}.
\end{cases}
\end{align*}
We have that $w$ and $z$ are supersolutions with $w(x,0) \geq 0$ and $z(x,0)\geq 0$. In fact, we have that
\begin{itemize}
\item[i)] $\displaystyle  \frac{\partial w}{\partial t}=  \frac{\partial \overline{u}}{\partial t} - 
 \frac{\partial \underline{u}}{\partial t}  \geq \frac{\partial^2 \overline{u}}{\partial x^2} -
 \frac{\partial^2 \underline{u}}{\partial x^2}  = \frac{\partial^2 w}{\partial x^2} ;$
\item[ii)] $\displaystyle \frac{\partial w}{\partial x}(-1,t)=
\frac{\partial \overline{u}}{\partial x} (-1,t)- \frac{\partial \underline{u}}{\partial x} (-1,t) \leq 0;$
\item[iii)] \begin{align*}
\frac{\partial w}{\partial x}(0,t) & =
\frac{\partial \overline{u}}{\partial x} (0,t)- \frac{\partial \underline{u}}{\partial x} (0,t) \\[10pt]
& \leq C_{J,2}\int_{-1}^{0}\int_{0}^{1}J(x-y)(\overline{v}(y,t)-\overline{u}(0,t))dydx 
-C_{J,2}\int_{-1}^{0}\int_{0}^{1}J(x-y)(\underline{v}(y,t)-\underline{u}(0,t))dydx \\[10pt]
& = C_{J,2}\int_{-1}^{0}\int_{0}^{1}J(x-y)(z(y,t)-w(0,t))dydx;
\end{align*}
\item[iv)] \begin{align*}
\frac{\partial z}{\partial t} & = \frac{\partial \overline{v}}{\partial t} - \frac{\partial \underline{v}}{\partial t}   \\[10pt]
& \geq C_{J,1}\int_{0}^{1}J(x-y)(\overline{v}(y,t)-\overline{v}(x,t))dy-C_{J,2}\int_{-1}^{0}J(x-y)(\overline{v}(x,t)-\overline{u}(0,t))dydx \\[10pt]
& \qquad - \left(C_{J,1}\int_{0}^{1}J(x-y)(\underline{v}(y,t)-\underline{v}(x,t))dy-C_{J,2}\int_{-1}^{0}J(x-y)(\underline{v}(x,t)-\underline{u}(0,t))dydx \right)
\\[10pt]
&= C_{J,1}\int_{0}^{1}J(x-y)(z(y,t)-z(x,t))dy-C_{J,2}\int_{-1}^{0}J(x-y)(z(x,t)-w(0,t))dydx.
\end{align*}
\end{itemize}
Now, we want to show that $w \geq 0$ and $z \geq 0$, for all $t>0$, which implies $\overline{u}\geq \underline{u}$ and $\overline{v}\geq \underline{v}$. To show this we aim to conclude that  its negative parts are identically zero, $w_{-} \equiv 0$ and $z_{-} \equiv 0$.
Take $\varphi = w_{-} \geq 0$ and $\psi = z_{-} \geq 0$ as test functions 
(multiply the previous inequalities by them and integrate). We obtain,
\begin{align*}
0 & \leq \left(\int_{-1}^{0} \frac{\partial w}{\partial t}  \varphi + \int_{0}^{1} \frac{\partial z}{\partial t}  \psi\right) \\[10pt]
& = -\int_{-1}^{0} \frac{\partial w}{\partial x}  \frac{\partial \varphi}{\partial x} dx
-\frac{C_{J,1}}{2}\int_{0}^{1}\int_{0}^{1}J(x-y)(z(y)-z(x))(\psi(y)-\psi(x))dydx \\[10pt]
& \qquad -C_{J,2}\int_{0}^{1}\int_{-1}^{0}J(x-y)(z(x)-w(0))(\psi(x)-\varphi(0))dydx \\[10pt]
& = -2 E(w_{-},z_{-}) \leq 0.
\end{align*}
Therefore $E(w_{-},z_{-})=0$, which implies $w_{-} \equiv 0$ and $z_{-} \equiv 0$, as we wanted to show.
\end{proof}

\subsection{Asymptotic decay}
Now we study the asymptotic behavior as $t \to \infty$. We start by analyzing
the corresponding stationary problem.

First, let us observe that for any constant $k$,
$u=v=k$, is a solution to the problem \eqref{local}--\eqref{nonlocal}. Besides, this constant
solution is a minimizer of the energy (a simple inspection of the energy shows more, every
minimizer is constant in the whole domain $(-1,1)$).

Let us take $\beta_{1}$ as
\begin{align}\label{eigenvalue}
\displaystyle 
    0<\beta_{1}=\inf_{u,v : \int_{-1}^{0} u+\int_{0}^{1} v=0}\frac{
    \displaystyle E(u,v)}{\displaystyle  \int_{-1}^{0}(u(x))^2 dx+\int_{0}^{1}(v(x))^2 dx}.
\end{align}
Now our goal is to show that $\beta_1$ is strictly positive.

\begin{lemma}\label{GQR}
Let $\beta_{1}$ br given by \eqref{eigenvalue}, then
$$
\beta_{1}>0,
$$ 
and is such that 
\begin{align}\label{ener.ineq}
\displaystyle 
    E(u,v) \geq \beta_{1}  \int_{-1}^{0}(u(x))^2 dx+\int_{0}^{1}(v(x))^2 dx,
\end{align}
for every $(u,v)$ such that $\int_{-1}^{0}u+\int_{0}^{1}v=0$.
\end{lemma}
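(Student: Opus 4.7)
The plan is to combine the two ingredients already established in Section 2: the energy control from Lemma \ref{energy.lema} and a nonlocal Poincaré-type inequality for the kernel $J$ itself (the $\varepsilon = 1$ version of the inequality quoted from \cite{andreu2010nonlocal} in Section 2.2). Under our standing assumptions on $J$ (continuous, symmetric, non-degenerate near the origin, with $\int J = 1$), the standard nonlocal Poincaré inequality reads
\begin{equation*}
\int_{-1}^{1} \Big|w(x) - \fint_{-1}^{1} w\Big|^{2}\, dx \;\leq\; C \int_{-1}^{1}\int_{-1}^{1} J(x-y)\,(w(y)-w(x))^{2}\, dy\, dx,
\end{equation*}
for every $w \in L^{2}(-1,1)$, with a constant $C = C(J,\Omega) > 0$.

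Given $(u,v)$ satisfying the constraint $\int_{-1}^{0} u + \int_{0}^{1} v = 0$, let $w = u\,\chi_{(-1,0)} + v\,\chi_{(0,1)}$. The constraint says exactly that $\fint_{-1}^{1} w = 0$, so the Poincaré inequality reduces to
\begin{equation*}
\int_{-1}^{0} u^{2}\, dx + \int_{0}^{1} v^{2}\, dx \;=\; \int_{-1}^{1} w^{2}\, dx \;\leq\; C \int_{-1}^{1}\int_{-1}^{1} J(x-y)\,(w(y)-w(x))^{2}\, dy\, dx.
\end{equation*}
Chaining this with Lemma \ref{energy.lema} yields
\begin{equation*}
E(u,v) \;\geq\; k \int_{-1}^{1}\int_{-1}^{1} J(x-y)\,(w(y)-w(x))^{2}\, dy\, dx \;\geq\; \frac{k}{C}\left(\int_{-1}^{0} u^{2}\, dx + \int_{0}^{1} v^{2}\, dx \right),
\end{equation*}
which shows that the Rayleigh quotient defining $\beta_{1}$ is bounded below by $k/C > 0$, giving both the strict positivity of $\beta_{1}$ and the inequality \eqref{ener.ineq}.

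The only potentially delicate point is the legitimacy of invoking the nonlocal Poincaré inequality for our $J$. If one prefers to keep the paper self-contained, the same conclusion can be reached by a direct contradiction argument in the style of the proof of Lemma \ref{energy.lema}: assume $\beta_{1} = 0$ and take a normalized minimizing sequence $(u_{n}, v_{n})$ with $\int u_{n} + \int v_{n} = 0$ and $\int u_{n}^{2} + \int v_{n}^{2} = 1$ along which $E(u_{n}, v_{n}) \to 0$; the three vanishing-energy pieces force $u_{n} \to A$ in $H^{1}(-1,0)$ and $v_{n} \to B$ in $L^{2}(0,1)$ (using Lemma \ref{energy.lema} together with compactness from the nonlocal term, exactly as in the previous proof), the third coupling term gives $A = B$, and the zero-mean constraint forces $A = B = 0$, contradicting the normalization. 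Either route works, but the direct one using the Poincaré inequality is shorter and makes the constant $\beta_{1} \geq k/C$ explicit in terms of the constants already produced.
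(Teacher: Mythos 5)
Your main argument is correct and takes a genuinely different route from the paper. The paper proves the lemma by re-running a compactness/contradiction argument from scratch: it takes a normalized minimizing sequence with zero total mean and vanishing energy, extracts constants $k_1$ (from the $H^1$ bound on $u_n$) and $k_2$ (from a nonlocal Poincar\'e inequality on $(0,1)$ applied to $v_n - \int_0^1 v_n$, quoted from \cite{ChChRo}), identifies $k_1=k_2$ via the coupling term, and derives a contradiction with the normalization --- exactly the fallback route you sketch at the end. Your primary route instead chains Lemma \ref{energy.lema} with the global nonlocal Poincar\'e inequality on $(-1,1)$, which is precisely the $\varepsilon=1$ case of the inequality \eqref{poincareine} that the paper already quotes from \cite{andreu2010nonlocal} in Section 2.2, so no new ingredient is needed (the non-degeneracy of $J$ near the origin that you flag is the same hypothesis the paper implicitly relies on there and in its own use of \cite{ChChRo}). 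Since both sides of \eqref{energycontrol} are unaffected by the mean-zero constraint, the chaining
\begin{equation*}
E(u,v) \;\geq\; k \int_{-1}^{1}\!\!\int_{-1}^{1} J(x-y)(w(y)-w(x))^{2}\,dy\,dx \;\geq\; \frac{k}{C}\int_{-1}^{1} w^{2}\,dx
\end{equation*}
is valid and yields the explicit lower bound $\beta_1 \geq k/C$. What your approach buys is brevity, modularity (it reuses Lemma \ref{energy.lema} rather than duplicating its compactness argument), and an explicit constant; what the paper's approach buys is self-containment on the interval $(-1,1)$ without invoking the whole-domain Poincar\'e inequality at the fixed scale. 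Both are sound.
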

\begin{proof}
Let us argue by contradiction. Suppose that \eqref{ener.ineq} is false. Then there exists sequences $\left\{u_{n}\right\} \in H^1(-1,0)$ and $\left\{v_{n}\right\} \in L^2(0,1)$ such that 
\begin{itemize}
\item[i)] $\displaystyle \int_{-1}^{0}u_{n}+\int_{0}^{1}v_{n}=0,$
\item[ii)] $\displaystyle \int_{-1}^{0}(u_{n})^2+\int_{0}^{1}(v_{n})^2=1$,
\end{itemize}
and 
$$\frac{1}{2}\int_{-1}^{0}\Big| \frac{\partial u_n}{\partial x}\Big|^2 dx + \frac{C_{J,1}}{4}\int_{0}^{1}\int_{0}^{1}J(x-y)\left(v_n(y)-v_n(x)\right)^2 dydx 
+  \frac{C_{J,2}}{2}\int_{0}^{1}\int_{-1}^{0}J(x-y)dy\left(v_n(x)-u_n(0)\right)^2 dx \leq \frac{1}{n}.
$$
Consequently, taking the limit in $n$, we obtain
\begin{equation}\label{a}
    \lim_{n}\left(\frac{1}{2}\int_{-1}^0 \Big| \frac{\partial u_n}{\partial x}\Big|^2\right)=0,
\end{equation}
\begin{equation}\label{b}
    \lim_{n}\left(\frac{C_{J,1}}{4}\int_{0}^{1} \int_{0}^{1} J(x-y) (v_n(y) - v_n(x))^2 dy dx\right)=0,
\end{equation}
and
\begin{equation}\label{c}
    \lim_{n}\left(\frac{C_{J,2}}{2}\int_{-1}^{0} \int_{0}^{1} J(x-y) (u_n(0) - v_n(x))^2 dx dy\right)=0.
\end{equation}
From $ii)$ we have that $\int_{-1}^{0}(u_{n})^2 \leq 1$. Then, $u_{n}$ is bounded in $H^1(-1,0)$, and hence there exists a subsequence $\left\{u_{nj}\right\} \in H^1(-1,0)$, which weakly converges for a limit $u \in H^1(-1,0)$. 
From this weak convergence follows the convergence of $\left\{u_{nj}\right\} \to u$ in $L^2(-1,0)$ and the uniform convergence in $[-1,0]$ to $u \in H^1(-1,0)$. Moreover, as $\frac{1}{2}\int_{-1}^0 ((u_{n})_{x})^{2} \to 0$ we have that the limit $u$ is a constant, $u=k_{1}$. Note that $u_{nj}(0) \to k_{1}$.

Also from $ii)$ we have that $\int_{0}^{1}v_{n}^2 \leq 1$. Since $\left\{v_{n}\right\}$ is bounded in $L^2(0,1)$, there exists a subsequence $\left\{v_{nj}\right\}$, which weakly converge for some limit $v$ in $L^2(0,1)$.
Now, we observe that
\begin{equation*}
\int_{0}^{1}|v_{nj}| \leq \left(\int_{0}^{1}v_{nj}^2\right)^{1/2} \leq 1.
\end{equation*}
Since $\left\{v_{n}\right\}$ is bounded in $L^2(0,1)$, there exists a subsequence such that $k_{n}=\int_{0}^{1}v_{n}$ converges to some limit $k_{2}$.

Consider $z_{n} = v_{n} - k_{n}$. We have (by Lemma 4.2, see \cite{ChChRo})
\begin{align*}
& \frac{C_{J,1}}{4}\int_{0}^{1}\int_{0}^{1}J(x-y)|z_{n}(y)-z_{n}(x)|^2dydx  =\frac{C_{J,1}}{4}\int_{0}^{1}\int_{0}^{1}J(x-y)|v_{n}(y)-v_{n}(x)|^2 dydx \to 0.
\end{align*}
In fact, applying the result in \cite{ChChRo}, (notice that we have $\int_{0}^{1}z_{nkj}=0$),
\begin{align*}
\frac{C_{J,1}}{4}\int_{0}^{1}\int_{0}^{1}J(x-y)(z_{n}(y)-z_{n}(x))^2 dxdy \geq c \int_{0}^{1}z_{n}^2.
\end{align*}
From this, we obtain that $z_{n} \to 0$ in $L^2(0,1)$, so
\begin{align*}
\int_{0}^{1}|v_{n}(x)-k_{n}|^2 \to 0,
\end{align*}
but $k_{n} \to k_{2}$, and then $v_{n} \to k_{2}$ in $L^2(0,1)$.

Finally, from \eqref{c} we have $$\frac{C_{J,2}}{2}\int_{-1}^{0} \int_{0}^{1} J(x-y) (u_n(0) - v_n(x))^2 dx dy \to 0,$$ 
which leads to
\begin{align*}
\frac{C_{J,2}}{2}\int_{-1}^{0} \int_{0}^{1} J(x-y) (k_{1}-k_{2})^2 dx dy,
\end{align*}
and hence $k_{1}=k_{2}$. On the other hand, by $i)$ we have $\int_{-1}^{0}u_{n}+\int_{0}^{1}v_{n}=0$, so $k_{1}=0$ and $k_{2}=0$, 
that leads to a contradiction passing to the limit in $\int_{-1}^{0}(u_{n})^2+\int_{0}^{1}(v_{n})^2=1$ to obtain
$\int_{-1}^{0}(k_1)^2+\int_{0}^{1}(k_2)^2=1$.
\end{proof}

\begin{remark} {\rm This value $\beta_1$ should be the first nontrivial eigenvalue for our problem
(notice that $\beta =0$ is an eigenvalue with $u=v=cte$ as eigenfunctions). However, due to the
lack of compactness of the nonlocal part, it is not clear that the infimum defining $\beta_1$ is attained.
}
\end{remark}

Now, we are ready to prove the exponential convergence of the solutions to the mean value of the initial datum
as $t\to +\infty$.

\begin{theorem} \label{teo.asymp.intro.909}
Given $w_{0} \in L^2(-1,1)$, the solution to \eqref{complete} with initial condition $w_0$ 
converges to its mean value as $t \to \infty$ with an exponential rate,
\begin{equation}
    \left\| w(\cdot ,t) - \fint w_0 \right\|_{L^2(-1,1)} \leq C(\| w_{0} \|_{L^2(-1,1)}) e^{-\beta_{1} t}, \qquad t>0,
\end{equation}
where $\beta_{1}$ is given by \eqref{eigenvalue} and $C(w_{0})>0$.
\end{theorem}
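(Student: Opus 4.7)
The plan is to combine mass conservation, the gradient flow structure of the problem, and the Poincaré-type estimate from Lemma \ref{GQR}. Since the scheme is standard for gradient flows, the argument reduces to writing down the right energy identity and applying a Gronwall-type inequality.

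First, I would reduce to the zero-mean case. Let $\bar{w} := \fint_{-1}^{1} w_0$. Because any constant is a stationary solution of \eqref{local}--\eqref{nonlocal} and the equation is linear, the function
\begin{equation*}
\tilde{w}(x,t) := w(x,t) - \bar{w}, \qquad \tilde{u} := u - \bar{w},\ \tilde{v} := v - \bar{w},
\end{equation*}
is again a solution of \eqref{local}--\eqref{nonlocal} with initial datum $\tilde{w}_0 = w_0 - \bar{w}$. By the conservation of mass proved above, $\int_{-1}^{0}\tilde{u}(\cdot,t)+\int_{0}^{1}\tilde{v}(\cdot,t) = \int_{-1}^{1}\tilde{w}_0 = 0$ for every $t\geq 0$, so $(\tilde{u}(\cdot,t),\tilde{v}(\cdot,t))$ is admissible in the infimum defining $\beta_1$ at each time.

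Next, I would exploit the gradient flow structure. Multiplying the equation for $u$ by $\tilde{u}$ and the equation for $v$ by $\tilde{v}$, integrating by parts in the local part (using the Neumann condition at $x=-1$ and the Robin-type flux at $x=0$), symmetrizing the nonlocal double integrals via Fubini and the symmetry of $J$, one obtains the energy identity
\begin{equation*}
\frac{1}{2}\frac{d}{dt}\Bigl(\int_{-1}^{0}\tilde{u}^2\,dx + \int_{0}^{1}\tilde{v}^2\,dx\Bigr) = -E(\tilde{u},\tilde{v}),
\end{equation*}
which is exactly the statement that $(\tilde{u},\tilde{v})$ is the gradient flow of $E$. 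For smooth initial data these manipulations are justified directly; for general $w_0 \in L^2(-1,1)$ one first approximates by smooth data, derives the inequality below, and then passes to the limit using the continuous dependence already established.

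Combining this identity with Lemma \ref{GQR} applied to $(\tilde{u}(\cdot,t),\tilde{v}(\cdot,t))$, which has zero total integral, yields
\begin{equation*}
\frac{1}{2}\frac{d}{dt}\|\tilde{w}(\cdot,t)\|_{L^2(-1,1)}^2 = -E(\tilde{u},\tilde{v}) \leq -\beta_1\,\|\tilde{w}(\cdot,t)\|_{L^2(-1,1)}^2.
\end{equation*}
Gronwall's inequality then gives $\|\tilde{w}(\cdot,t)\|_{L^2(-1,1)}^2 \leq e^{-2\beta_1 t}\|\tilde{w}_0\|_{L^2(-1,1)}^2$, and taking square roots produces the claimed bound with $C = \|w_0 - \fint w_0\|_{L^2(-1,1)} \leq \|w_0\|_{L^2(-1,1)}$.

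The only delicate step is the energy identity, since one has to check that the boundary term generated by integration by parts at $x=0$ exactly cancels the diagonal term produced by the coupling double integral, and that Fubini can be applied to combine the nonlocal contributions from the equations for $u$ and $v$; this is essentially a recomputation of the derivative of $E$ performed in the introduction, now tested against $(\tilde{u},\tilde{v})$ itself. Once this cancellation is in place, the rest of the argument is immediate.
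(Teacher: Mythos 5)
Your proposal is correct and follows essentially the same route as the paper: subtract the mean (a stationary solution), use conservation of mass to keep the shifted solution admissible in the infimum defining $\beta_1$, derive the energy dissipation identity by testing with the solution itself, apply Lemma \ref{GQR}, and conclude by Gronwall. The only quibble is a harmless constant: the dissipation identity is $\frac{1}{2}\frac{d}{dt}\|\tilde{w}(\cdot,t)\|_{L^2(-1,1)}^2 = -2E(\tilde{u},\tilde{v})$ rather than $-E(\tilde{u},\tilde{v})$ (the cross terms produced by symmetrization carry twice the coefficients appearing in $E$), which only improves the differential inequality and does not affect the stated rate $e^{-\beta_1 t}$.
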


\begin{proof}
As we know, $u=v=k$, $k$ constant, is a solution of \eqref{local}--\eqref{nonlocal}. In particular $h(x,t)=u(x,t)-k$ and $z(x,t)=v(x,t)-k$ is also a solution. If $k= \int_{-1}^{0}u_{0}+\int_{0}^{1}v_{0}$, 
then $h$ and $z$ satisfy $$\int_{-1}^{0}h(x,t)dx+\int_{0}^{1}z(x,t)dx=0.$$

Let $$f(t)=\frac{1}{2}\int_{-1}^{0}h(x,t)^2dx+\frac{1}{2}\int_{0}^{1}z(x,t)^2dx.$$
Differentiating $f$ with respect to $t$, we obtain
\begin{align*}
& f'(t) =\int_{-1}^{0}h  \frac{\partial h}{\partial t} dx + \int_{0}^{1}z  \frac{\partial z}{\partial t} dx \\[10pt]
& = \int_{-1}^{0}h  \frac{\partial^2 h}{\partial x^2} dx + C_{J,1}\int_{0}^{1}z(x,t)\int_{0}^{1}J(x-y)(z(y,t)-z(x,t))dy dx \\[10pt]
& \qquad - C_{J,2}\int_{0}^{1}z(x,t)\int_{-1}^{0}J(x-y)z(x,t)dydx + C_{J,2}\int_{0}^{1}z(x,t)h(0,t)\int_{-1}^{0}J(x-y)dydx \\[10pt]
& =h(0,t) \frac{\partial h}{\partial x}(0,t)-h(-1,t) \frac{\partial h}{\partial x} (-1,t)-\int_{-1}^{0} \Big|  \frac{\partial h}{\partial x}\Big|^2 dx \\[10pt]
& \qquad +C_{J,1}\int_{0}^{1}\int_{0}^{1}J(x-y)(z(y,t)-z(x,t))z(x,t)dy dx \\[10pt]
&  \qquad - C_{J,2}\int_{0}^{1}\int_{-1}^{0}J(x-y)z(x,t)^2dydx + C_{J,2}\int_{0}^{1}h(0,t)\int_{-1}^{0}J(x-y)z(x,t)dydx. 
\end{align*}
Applying Fubini's theorem and using the symmetry of the kernel we obtain
\begin{align*}
f'(t)= -2 E(h,z) (t).
\end{align*}
Finally, applying Lemma \ref{GQR}, we get
$$
2 E(h,z)  \geq 2\beta_{1}\int_{-1}^{0}h^2+\int_{0}^{1}z^2  \geq 2\beta_{1} f(t),
$$
which implies $$f'(t)\leq -2\beta_{1}f(t).$$ 
Hence,
$$f(t)\leq e^{-2 \beta_{1}t}f(0),$$ where
\begin{align*}
f(0)=\frac{1}{2}\left(\int_{-1}^{0}h_{0}^2dx+\int_{0}^{1}z_{0}^2dx\right) = C(\parallel w_{0}\parallel_{L^2(-1,1)}).
\end{align*}
From this follows that
\begin{align*}
\int_{-1}^{0}|u(t,x)-k|^2dx+\int_{0}^{1}|v(t,x)-k|^2dx \leq C(\parallel w_{0}\parallel_{L^2(-1,1)}) e^{-2 \beta_{1}t} \to 0,
\end{align*}
as $t \to \infty$. In particular, we have that $u \to k$ in $L^2(-1,0)$ and $v \to k$ in $L^2(0,1)$.
\end{proof}

\section{Rescaling the kernel. Convergence to the local problem}
We will prove the strong convergence in $L^2(-1,1)$, uniformly  or bounded times, of the solutions of the rescaled problem (with $J$ as in \eqref{rescale-J})
 to the solution of the local problem \eqref{localcomplete} (the heat equation in the whole domain with homogeneous Newman boundary conditions) 
 using the Brezis-Pazy Theorem trough Mosco's convergence result. To perform this task we need to provide another existence of solutions results for the problem \eqref{local}-\eqref{nonlocal} based on semigroup theory for m-accretive operators.
 
 \subsection{Existence and uniqueness of a mild solution}
{\bf On the concept of solution.}
We will introduce now the concept of solution for the problem \eqref{complete} that we are going to use here. 
We rely on serigroup theory and introduce the operator
\begin{align*}
 \displaystyle 
B_{J}u (x) =
\begin{cases} 
\displaystyle  -  \frac{\partial^2 u}{\partial x^2} (x) \qquad  \text{for} \quad x \in (-1,0),\\[10pt]
\displaystyle  -\frac{C_{J,1}}{2}\int_{0}^{1}J(x-y)(u(y)-u(x))dy+C_{J,2}\int_{-1}^{0}J(x-y)(u(y)-u(0))dy \qquad \text{for} \quad x \in (0,1).
\end{cases}
\end{align*}
Let $$D(B_{J}):=\Big\{ (u,v) : u \in H^2(-1,0), v\in L^2(0,1) \mbox{ with }  \frac{\partial u}{\partial x}(-1) = 0 
\mbox{ and }  \frac{\partial u}{\partial x} (0) = -C_{J,2}\int_{-1}^{0}J(x-y)(v(y)-u(0))dy\Big\},
$$ 
be the domain of the operator, and
$$
B_{J} : D(B_{J}) \subset L^2(-1,1) \mapsto L^2(-1,1).
$$ 

Now, according to \cite{andreu2010nonlocal}, we can define a mild solution in $L^2(-1,1)$, of the abstract Cauchy problem by:
\begin{align}\label{semigroup}
\begin{cases}
&u'(t)=B_{J}(u(t)), \quad t>0\\
&u(0)=u_{0}.
\end{cases}
\end{align}
Moreover, given an initial condition in the domain of the operator, there exists a unique strong solution for this problem, given by the semigroup related to $B_{J}$, see \citep{andreu2010nonlocal,brezis} for more details.

Following the ideas presented in \cite{andreu2010nonlocal}, we will see that the operator $B_{J}$ is completely accretive in $L^2(-1,1)$ and satisfies the range condition, $L^2(-1,1)\subset R(I+B_{J})$, and hence $B_J$ is $m-$completely accretive in $L^2(-1,1)$. The range condition implies that for any $f \in L^2(-1,1)$ there exists $u \in D(B_{J})$ such that, $u+B_{J}(u)=f$, and the resolvent $(I+B_{J})^{-1}$ is a contraction in $L^2(-1,1)$. With this in mind, by the Crandall-Ligget's Theorem we will obtain the existence and uniqueness of a mild solution for the coupled local/nonlocal evolution problem.

\begin{theorem}
Given and initial condition $w_{0}$ $\in L^{2}(-1,1)$, there exists a mild solution $w$ of the problem \eqref{complete} that is a contraction in $L^2-$norm.
\end{theorem}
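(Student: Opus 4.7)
The plan is to verify the two hypotheses of Crandall--Liggett's theorem for the operator $B_J$ on $L^2(-1,1)$, namely accretivity (in fact, complete accretivity) and the range condition $R(I+B_J)\supseteq L^2(-1,1)$. Once these are in place, $B_J$ is $m$--accretive by Minty's characterization and the standard semigroup machinery (\cite{brezis,andreu2010nonlocal}) delivers a unique mild solution via the exponential formula $w(t)=\lim_{n\to\infty}(I+(t/n)B_J)^{-n}w_{0}$, which is automatically an $L^2$--contraction because every resolvent $(I+\lambda B_J)^{-1}$ is.

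For accretivity I would take $(u_1,v_1),(u_2,v_2)\in D(B_J)$, set $u=u_1-u_2$, $v=v_1-v_2$ and compute $\int_{-1}^{1}(B_Jw_1-B_Jw_2)(w_1-w_2)\,dx$ directly. Integration by parts on $(-1,0)$ combined with the Neumann condition at $x=-1$ and the Robin--type flux condition at $x=0$ built into $D(B_J)$, together with Fubini and the symmetry of $J$ on the two nonlocal quadratic terms, should collapse the expression to exactly $2E(u,v)\geq 0$. To upgrade to complete accretivity one redoes the same computation with $p(u),p(v)$ in place of $u,v$ for an arbitrary nondecreasing smooth $p$ with $p(0)=0$: the local bulk integral picks up the nonnegative factor $p'(u)$, while the monotonicity identity $(a-b)(p(a)-p(b))\geq 0$ handles the two nonlocal and coupling terms.

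The main obstacle is the range condition. Given $f=f_1\chi_{(-1,0)}+f_2\chi_{(0,1)}\in L^2(-1,1)$, the plan is to obtain a solution of $w+B_Jw=f$ as the unique minimizer in $\mathcal{B}$ of the strictly convex functional
\[
F(u,v)=\tfrac12\int_{-1}^{0}u^2\,dx+\tfrac12\int_{0}^{1}v^2\,dx+E(u,v)-\int_{-1}^{0}f_1 u\,dx-\int_{0}^{1}f_2 v\,dx.
\]
The mass terms alone give $L^2$--coercivity on both sides of the interface, so the well--known lack of compactness of the nonlocal quadratic form is not an issue; the local Dirichlet part of $E$ then supplies the extra $H^1(-1,0)$--bound on $u$. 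Each summand of $E$ is a convex quadratic form, hence $F$ is weakly lower semicontinuous in $H^1(-1,0)\times L^2(0,1)$ and a unique minimizer exists. Testing the Euler--Lagrange equation first against $\varphi\in C_c^\infty(-1,0)$, then against $\varphi\in C_c^\infty(0,1)$, and finally against a test function not vanishing at $x=0$, I expect to recover $u-u_{xx}=f_1$ on $(-1,0)$, the nonlocal equation for $v$ on $(0,1)$, the Neumann condition at $x=-1$, and precisely the Robin--type flux at $x=0$ that defines $D(B_J)$; standard elliptic regularity then promotes $u$ to $H^2(-1,0)$.

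I expect the delicate step to be matching the boundary contribution produced by the integration by parts of the local Dirichlet energy at $x=0$ with the variational derivative of the nonlocal coupling term (which depends on the trace $u(0)$ in a nontrivial way), so that together they reproduce exactly the Robin condition in $D(B_J)$ rather than a mismatched interface term. Once this bookkeeping is carried out, the range condition follows, $B_J$ is $m$--accretive on $L^2(-1,1)$, and Crandall--Liggett concludes the proof.
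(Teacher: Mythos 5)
Your proposal is correct and follows essentially the same route as the paper: complete accretivity of $B_J$ via testing against monotone functions of the difference, the range condition via the direct method applied to the strictly convex functional $\tfrac12\|w\|_{L^2}^2+E(w)-\int fw$, and then Crandall--Liggett. If anything, you are slightly more explicit than the paper about the final step (verifying via the Euler--Lagrange equation that the minimizer lies in $D(B_J)$ and that the interface terms match the Robin condition), a bookkeeping step the paper leaves implicit; the energy $E$ was constructed precisely so that this matching works.
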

\begin{proof}
It is enough to show that the operator $B_{J}$ is completely accretive in $L^2(-1,1)$ and satisfies the range condition, $L^2(-1,1)\subset R(I+B_{J})$.
Consider the set 
\begin{equation*}
P_{0}=\left\{q \in C^{\infty}(-1,1): 0 \leq q \leq 1, \quad \text{supp($q'$) is compact and $ 0 \not \in$ supp($q$)} \right\}.
\end{equation*}
To show the operator $B_{J}$ is completely accretive is equivalent to show that, 
given $w_{1}, w_{2} \in D(B_{J})$, and $q(w_{1}-w_{2})$, as a function test, we have that
\begin{equation}\label{monotonocy}
\int_{-1}^{1}(B_{J}(w_{1}(x))-B_{J}(w_{2}(x)))q(w_{1}(x)-w_{2}(x))dx \geq 0.
\end{equation}
Using the weak form of the operator we get
\begin{align*}
&\int_{-1}^{1}(B_{J}(w_{1}(x))-B_{J}(w_{2}(x)))q(w_{1}(x)-w_{2}(x))dx \\[10pt] 
&= \int_{-1}^{0}\frac{\partial (w_{1}-w_{2})}{\partial x} \frac{\partial [q(w_{1}-w_{2}(x))]}{\partial x}dx \\[10pt]
& \qquad +\frac{C_{J,1}}{2}\int_{0}^{1}\int_{0}^{1}J(x-y)[(w_{1}-w_{2})(y)-(w_{1}-w_{2})(x)]  [q(w_{1}(y)-w_{2}(y))-q(w_{1}(x)-w_{2}(x))]dydx\\[10pt]
& \qquad +C_{J,2}\int_{0}^{1}\int_{-1}^{0}J(x-y)[(w_{1}-w_{2})(x)-(w_{1}-w_{2})(0)]
[q(w_{1}(x)-w_{2}(x))-q(w_{1}(0)-w_{2}(0))]dydx.
\end{align*}
Since $J \geq 0$, using the Mean Value Theorem, we obtain that the inequality \eqref{monotonocy} holds.

To see that $B_{J}$ is m–completely accretive in $L^{2}(-1,1)$ we need to show that it satisfies the range condition
\begin{equation*}
L^2(-1,1)\subset R(I+B_{J}).
\end{equation*}
Given $f \in L^2(-1,1)$, we consider the variational problem
\begin{equation}\label{variational}
I[u]= \min_{u \in L^2(-1,1)}\left\{\frac{1}{2}\int_{-1}^{1}u^2+E(u)-\int_{-1}^{1}fu\right\}.
\end{equation}
The existence of a unique minimizer $u$, of the variational problem \eqref{variational}, is proved using the direct method in the calculus of variations. This operator is continuous, monotone and coercive in $L^2(-1,1)$. Indeed, using Young's inequality, we obtain
 \begin{align}\label{coercive}
\frac{1}{2}\int_{-1}^{1}u^2+E(u)-\int_{-1}^{1}fu & \geq \frac{1}{2}\int_{-1}^{1}u^2+E(u) -\left(\int_{-1}^{1}f^2\right)^{1/2}\left(\int_{-1}^{1}u^2\right)^{1/2} 
 \geq \frac{3}{8}\int_{-1}^{1}u^2+E(u)-C,
\end{align}
and then
\begin{equation*}
\lim_{{\parallel u \parallel}_{L^2(-1,1)} \to \infty}\frac{I(u)}{\parallel u \parallel}_{L^2(-1,1)} \geq \lim_{{\parallel u \parallel}_{L^2(-1,1)} \to \infty} \frac{\left(\frac{3}{8}\parallel u \parallel _{L^2(-1,1)}+E(u)-C\right)}{\parallel u \parallel_{L^2(-1,1)}} = + \infty.
\end{equation*} 
Then, from \cite{evans}, there exists a minimizing sequence $\{u_{n}\}$ in $H^1(-1,0)\cap L^2(-1,1)$, with $n \in \mathbb{N}$, such that 
\begin{equation*}
\frac{1}{2}\int_{-1}^{1}u_{n}^2+E(u_{n})-\int_{-1}^{1}fu_{n}\leq C, \quad \forall \quad n \in \mathbb{N}.
\end{equation*} 

Therefore $\parallel u_{n}\parallel_{L^2(-1,1)} \leq M$ and $\parallel u_{n}\parallel_{H^1(-1,0)} \leq M$, for all $n \in \mathbb{N}$.
Hence, by the compact embedding theorem [\cite{evans}, Rellich-Kondrachov Compactness Theorem], we can assume, taking a subsequence if necessary, that $u_{n} \rightharpoonup u$ in $L^2(-1,1)$, $u_{n} \to u$ in $L^2(-1,0)$, and by the reflexivity of $H^1(-1,0)$, we get that $u \in H^1(-1,0)$. 

According to \cite{evans}, as the functional $I(u)$ is bounded and convex, it follows that $I(u)$ is weakly lower semicontinuous, 
\begin{equation}\label{weaklylowersem}
I(u)\leq \liminf_{n \to \infty}I(u_{n}).
\end{equation}
Thanks to \eqref{weaklylowersem} and \eqref{coercive}, we can conclude that $u$ is actually a minimizer of the variational problem \eqref{variational}. The uniqueness follows by the strict convexity of the functional.
\end{proof}

\begin{remark} {\rm One can also show existence and uniqueness using Hille-Yosida Theorem. 
In fact, one can show that $B_J$ is closed, its domain $D(B_J)$ is dense in $L^2(-1,1)$ and it holds that
for every $\lambda>0$,
$$
\| (\lambda - B_J)^{-1} \|_{L^2 (-1,1)} \leq \frac1{\lambda}.
$$
}
\end{remark}

 The energy functional associated to the rescaled problem is given by
 \begin{align}
        E^{\varepsilon}(w^{\varepsilon})  := & \frac{1}{2}\int_{-1}^{0}\Big| \frac{\partial u^{\varepsilon}}{\partial x}\Big|^2 + \frac{C_{J,1}}{4\varepsilon^{3}}\int_{0}^{1}\int_{0}^{1}J^{\varepsilon}(x-y)\left(v^{\varepsilon}(y)-v^{\varepsilon}(x)\right)^2 dydx \\ \nonumber
   & + \frac{C_{J,2}}{2\varepsilon^{3}}\int_{-1}^{0}\int_{0}^{1}J^{\varepsilon}(x-y)\left(u^{\varepsilon}(0)-v^{\varepsilon}(x)\right)^2 dxdy,
 \end{align}
 if $w$ $\in$ $D(E^{\varepsilon}):=H^1(-1,0)\times L^2(0,1)$, and $E^{\varepsilon}(w):=\infty$ if not. Analogously, we define the limit energy functional as
 \begin{equation}
     E(w):= \frac{1}{2}\int_{-1}^{1}\Big| \frac{\partial w}{\partial x}\Big|^{2}  dx,
 \end{equation}
if $w$ $\in$ $D(E):=H^1(-1,1)$, and $E^{\varepsilon}(w):=\infty$ if not. 

Given $w_{0} \in L^2 (-1,1)$, for each $\varepsilon>0$, let $w^{\varepsilon}$ be the solution to the evolution problem associated with the energy $E^{\varepsilon}$, and $w$ be the solution associated to the functional $E$, considering the same initial condition.
 
 \begin{theorem}
 Under the above assumptions, the solutions to the rescaled problems, $w^{\varepsilon}$, converge to $w$, the solution of \eqref{localcomplete}. 
 For any finite $T>0$ we have 
 \begin{equation}
     \lim_{\varepsilon \to 0}\left(\max_{t \in [0,T]}\parallel w^{\varepsilon}(\cdot,t)-w(\cdot,t) \parallel_{L^2(-1,1)}\right)=0.
 \end{equation}
 \end{theorem}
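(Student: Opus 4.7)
The plan is to obtain convergence of the gradient flows from Mosco convergence of the associated convex energies $E^{\varepsilon}$ to $E$ on the Hilbert space $L^{2}(-1,1)$, and then invoke the Brezis–Pazy / Attouch theorem: Mosco convergence of lower semicontinuous convex functionals implies convergence of the resolvents of their subdifferentials, hence strong $L^{2}$-convergence of the associated semigroups uniformly on compact time intervals, for any common initial datum in $L^{2}(-1,1)$. Since in our setting the initial condition $w_{0}$ is the same for every $\varepsilon$ and for the limit problem, once Mosco convergence is established the conclusion follows.

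To prove Mosco convergence I would verify the two standard conditions. \textbf{Liminf:} take any $w_{\varepsilon}\to w$ strongly in $L^{2}(-1,1)$ and assume, passing to a subsequence, that $\liminf E^{\varepsilon}(w_{\varepsilon})<\infty$, so that each of the three nonnegative terms in $E^{\varepsilon}(w_{\varepsilon})$ is bounded independently of $\varepsilon$. The bound on $\tfrac{1}{2}\int_{-1}^{0}|u^{\varepsilon}_{x}|^{2}$ gives that $u^{\varepsilon}=w_{\varepsilon}|_{(-1,0)}$ is bounded in $H^{1}(-1,0)$ and therefore converges (along a subsequence) weakly in $H^{1}$ and uniformly on $[-1,0]$ to $u=w|_{(-1,0)}\in H^{1}(-1,0)$; in particular $u^{\varepsilon}(0)\to u(0)$. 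The bound on the rescaled nonlocal Dirichlet form on $(0,1)$, together with the Bourgain–Brezis–Mironescu/Ponce characterization of $H^{1}$ used in Theorem 6.11 of \cite{andreu2010nonlocal} (already invoked in the Poincaré lemma above), yields $v:=w|_{(0,1)}\in H^{1}(0,1)$ and the lower semicontinuity
\begin{equation*}
\liminf_{\varepsilon\to 0}\frac{C_{J,1}}{4\varepsilon^{3}}\int_{0}^{1}\!\!\int_{0}^{1}J\!\left(\tfrac{x-y}{\varepsilon}\right)(v^{\varepsilon}(y)-v^{\varepsilon}(x))^{2}\,dy\,dx\ \geq\ \frac{1}{2}\int_{0}^{1}|v_{x}|^{2}\,dx,
\end{equation*}
the constant matching because $C_{J,1}=2/M(J)$. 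The bound on the coupling term forces $u(0)=v(0^{+})$, so $w\in H^{1}(-1,1)$; adding the two interior contributions gives $\liminf E^{\varepsilon}(w_{\varepsilon})\geq E(w)$.

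\textbf{Recovery sequence:} given $w\in H^{1}(-1,1)=D(E)$, take the constant sequence $w_{\varepsilon}:=w$. The local term is identically $\tfrac{1}{2}\int_{-1}^{0}|w_{x}|^{2}$. The BBM/Ponce limit applied on $(0,1)$ gives
\begin{equation*}
\lim_{\varepsilon\to 0}\frac{C_{J,1}}{4\varepsilon^{3}}\int_{0}^{1}\!\!\int_{0}^{1}J\!\left(\tfrac{x-y}{\varepsilon}\right)(w(y)-w(x))^{2}\,dy\,dx\ =\ \frac{1}{2}\int_{0}^{1}|w_{x}|^{2}\,dx.
\end{equation*}
For the coupling term, the support restriction $|x-y|\leq R\varepsilon$ with $y\leq 0\leq x$ localizes the integration to $x\in[0,R\varepsilon]$, $y\in[-R\varepsilon,0]$, and from $w\in H^{1}$ we estimate $(w(0)-w(x))^{2}\leq x\int_{0}^{x}|w_{\xi}|^{2}\,d\xi$, which produces a coupling contribution of order $\varepsilon$ and therefore vanishes (for general $H^{1}$ data this is first verified for $w\in C^{1}(\overline{(-1,1)})$ and extended by density and the uniform bound on $E^{\varepsilon}$).

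The main obstacle is the liminf argument at the interface: deducing the trace identification $u(0)=v(0^{+})$ from the $\varepsilon$-independent bound on the coupling term, together with the Sobolev regularity $v\in H^{1}(0,1)$ extracted from the rescaled nonlocal energy. Both points are refined versions of the arguments already appearing in the proof of the Poincaré-type inequality, and once they are in place the Brezis–Pazy theorem delivers the claimed uniform $L^{2}(-1,1)$ convergence on $[0,T]$.
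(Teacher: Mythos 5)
Your overall strategy is exactly the paper's: Mosco convergence of the convex energies $E^{\varepsilon}$ to $E$, followed by the Brezis--Pazy theorem to pass from resolvent convergence to uniform-in-time $L^{2}$ convergence of the semigroups; your recovery-sequence argument (constant sequence, coupling term of order $\varepsilon$, BBM-type limit for the rescaled nonlocal Dirichlet form with $C_{J,1}=2/M(J)$) matches the paper's almost line by line. The difference, and the gap, is in the liminf step. You obtain $u\in H^{1}(-1,0)$ and $v\in H^{1}(0,1)$ separately and then claim that ``the bound on the coupling term forces $u(0)=v(0^{+})$,'' but you never prove this, and it is the crux. What the coupling bound actually gives is that the average of $(u^{\varepsilon}(0)-v^{\varepsilon}(x))^{2}$ over the shrinking interval $(0,R\varepsilon)$ is $O(\varepsilon)$; to convert that into $u(0)=v(0^{+})$ you must relate $\fint_{0}^{R\varepsilon}v^{\varepsilon}$ to $v(0^{+})$, and strong $L^{2}(0,1)$ convergence of $v^{\varepsilon}$ gives no control over averages on intervals whose length shrinks with $\varepsilon$; you would need an additional uniform (in $\varepsilon$) modulus-of-continuity estimate near the interface extracted from the bounded nonlocal energy. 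You flag this as ``the main obstacle'' and call it a refinement of the Poincar\'e-lemma argument, but there the limits are constants on each side, which is a genuinely easier situation than identifying traces of a nonconstant $H^{1}$ limit.

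The paper sidesteps this entirely: it shows that the coupling bound plus the $H^{1}(-1,0)$ bound on $u^{\varepsilon}$ controls the mixed term with $(u^{\varepsilon}(y)-v^{\varepsilon}(x))^{2}$, so that the $\varepsilon$-uniform energy-control lemma (Lemma \ref{energy.lema} and its rescaled version \eqref{controlenergy2}) bounds the full nonlocal Dirichlet form $\varepsilon^{-3}\int_{-1}^{1}\int_{-1}^{1}J(\tfrac{x-y}{\varepsilon})(w^{\varepsilon}(y)-w^{\varepsilon}(x))^{2}$ on the whole interval. The compactness theorem of \cite{andreu2010nonlocal} applied on $(-1,1)$ then yields directly that the limit lies in $H^{1}(-1,1)$, so continuity across the interface comes for free and no separate trace identification is needed. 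If you want to keep your route you must supply the missing interface estimate; otherwise you should invoke the energy-control lemma as the paper does. A second, smaller issue: you state the liminf condition for sequences converging \emph{strongly} in $L^{2}$, whereas Mosco convergence (as required by Brezis--Pazy/Attouch) demands it for \emph{weakly} convergent sequences; in this problem the energy bound upgrades weak to strong convergence along subsequences, but that step has to be said, and it is again the energy-control lemma that provides it.
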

 \begin{proof}
 To prove this result we will make use of the Brezis-Pazy Theorem (Theorem A.37, see \cite{andreu2010nonlocal}), for the sequence of m-accretive operators $B_{J^{\varepsilon}} \in L^2(-1,1)$ defined in the previous section. To apply this result we would like to show the convergence of the resolvents, that is, we want to show that
 \begin{equation}\label{convergenceofresolvent}
     \lim_{\varepsilon \to 0}\left(I+B_{J^{\varepsilon}}\right)^{-1} \phi=\left(I+A\right)^{-1} \phi,
 \end{equation}
 where $A(w):=-w_{xx}$ is the classic operator for the heat equation, and for every $\phi \in L^2(-1,1)$. If we can prove \eqref{convergenceofresolvent} then, by the Brezis-Pazy Theorem, we get the convergence of the solutions $w^{\varepsilon}$ to $w$ in $L^2(-1,1)$ uniformly in $[0,T]$. To prove the convergence of resolvents, we will use a convergence result given by Mosco, checking the following statements:
 \begin{itemize}
     \item [1)] For every $w \in D(E)$ there exists a sequence $\{w^{\varepsilon}\} \in D(E^{\varepsilon})$ such that $w^{\varepsilon} \to w$ in $L^2(-1,1)$ and
     \begin{equation}
         E(w) \geq \limsup_{\varepsilon \to 0}{E^{\varepsilon}(w^{\varepsilon})}.
     \end{equation}
      \item [2)] If $w^{\varepsilon} \to w$ weakly in $L^2(-1,1)$ and
     \begin{equation}
         E(w) \leq \liminf_{\varepsilon \to 0}{E^{\varepsilon}(w^{\varepsilon})}.
     \end{equation}
          \end{itemize}
          
Let us start to prove the assertion $2)$. We can suppose that the limit infimum is finite, otherwise, there is nothing to prove. 
Hence, we can assume that $E^{\varepsilon}(w^{\varepsilon}) \leq C$. With this in mind and because all the terms involved in the  energy are positive, we have
\begin{itemize}
    \item [i)] $\displaystyle \frac{1}{2}\int_{-1}^{0} \Big| \frac{\partial u^{\varepsilon}}{\partial x}\Big|^2 dx \leq C$;
    \item [ii)] $\displaystyle \frac{C_{J,1}}{4\varepsilon^{3}}\int_{0}^{1}\int_{0}^{1}J^{\varepsilon}(x-y) (v^{\varepsilon}(y)-v^{\varepsilon}(x))^2 dydx \leq C$;
    \item [iii)] $\displaystyle \frac{C_{J,2}}{2\varepsilon^{3}}\int_{-1}^{0}\int_{0}^{1}J^{\varepsilon}(x-y) (u^{\varepsilon}(0)-v^{\varepsilon}(x))^2 dxdy \leq C$.
\end{itemize}
From $i)$, it follows that, there exists a subsequence, also denoted by $\{u^{\varepsilon}\}$, such that 
\begin{equation}\label{uconv1}
   u^{\varepsilon} \rightharpoonup u \quad \in H^1(-1,0),
\end{equation}
which implies
\begin{equation}\label{uconv12}
   u^{\varepsilon} \to u \quad \mbox{ in }L^2(-1,0),
\qquad \mbox{and} \qquad
   u^{\varepsilon} \to u \quad \text{uniformly in } (-1,0).
\end{equation}

We also know that 
\begin{align}\nonumber
 & \frac{C_{J,2}}{2\varepsilon^{3}}\int_{-1}^{0}\int_{0}^{1}J^{\varepsilon}(x-y) (u^{\varepsilon}(y)-v^{\varepsilon}(x))^2 dxdy \\ \nonumber
 & \leq \frac{C_{J,2}}{2\varepsilon^{3}}\int_{-1}^{0}\int_{0}^{1}J^{\varepsilon}(x-y) (u^{\varepsilon}(y)-u^{\varepsilon}(0))^2 dxdy \\\nonumber
 & \qquad + \underbrace{\frac{C_{J,2}}{2\varepsilon^{3}}\int_{-1}^{0}\int_{0}^{1}J^{\varepsilon}(x-y) (u^{\varepsilon}(0)-v^{\varepsilon}(x))^2 dxdy.}_{\leq C}
\end{align}

Let us show that $$\frac{C_{J,2}}{2\varepsilon^{3}}\int_{-1}^{0}\int_{0}^{1}J^{\varepsilon}(x-y) (u^{\varepsilon}(y)-u^{\varepsilon}(0))^2 dxdy$$ is bounded. After a change of variables and observing that the $supp(J)=B(0,R)$, we get
\begin{align*}
  & \frac{C_{J,2}}{2\varepsilon^{3}}\int_{-1}^{0}\int_{0}^{1}J^{\varepsilon}(x-y) (u^{\varepsilon}(y)-u^{\varepsilon}(0))^2 dxdy \\[6pt]
  & =  \frac{C_{J,2}}{2\varepsilon^{2}}\int_{-1}^{0}\int_{\frac{-y}{\varepsilon}}^{\frac{1-y}{\varepsilon}}J(z)dz (u^{\varepsilon}(y)-u^{\varepsilon}(0))^2 dxdy 
  \\[6pt]
   & = \frac{C_{J,2}}{2\varepsilon^{2}}\int_{-R\varepsilon}^{0}f_{\varepsilon}(y)\frac{u^{\varepsilon}(y)-u^{\varepsilon}(0))^2}{\varepsilon}
   \frac{dy}{\varepsilon}.
\end{align*}
Changing variables again, using Holder's inequality and the arithmetic-geometric inequality, it follows that 
\begin{align}\label{auxinequality}\nonumber
  & \frac{C_{J,2}}{2}\int_{-R}^{0}f_{\varepsilon}(\varepsilon w)\frac{1}{\varepsilon}\left((u^{\varepsilon}(0)-u^{\varepsilon}(\varepsilon w)\right)^2 dw \\\nonumber
  & \leq \frac{C_{J,2}}{2}\int_{-R}^{0}f_{\varepsilon}(\varepsilon w) \left[ (-w^2)^{1/2}\left(\int_{\varepsilon w}^{0}(u^{\varepsilon}_{x}(s))^2ds\right)^{1/2}\right]dw \\\nonumber
  &\leq \frac{C_{J,2}}{2}\int_{-R}^{0}f_{\varepsilon}(\varepsilon w) \left[ \frac{1}{2}(-w^{2})+ \frac{1}{2}\int_{\varepsilon w}^{0}(u^{\varepsilon}_{x}(s))^2ds\right]dw \\\nonumber
  & \leq \frac{C_{J,2}}{8}\int_{-R}^{0}(-w^{2})dw + \frac{C_{2}}{8}\int_{-R}^{0}\left[\int_{\varepsilon w}^{0}(u^{\varepsilon}_{x}(s))^2ds\right] dw \\ 
  & = \Tilde{C} + \frac{C_{J,2}}{8}\int_{-R}^{0}\underbrace{\left[\int_{-1}^{0}(u^{\varepsilon}_{x}(s))^2ds\right]}_{\leq C} dw .
\end{align}
Therefore, we conclude that $$\frac{C_{J,2}}{2\varepsilon^{3}}\int_{-1}^{0}\int_{0}^{1}J^{\varepsilon}(x-y) (u^{\varepsilon}(y)-v^{\varepsilon}(x))^2 dxdy$$ is bounded. By \eqref{auxinequality} we obtain 
\begin{align}
      & \nonumber \overline{E}(w^{\varepsilon}) :=\frac{1}{2}\int_{-1}^{0}\Big| \frac{\partial u^{\varepsilon}}{\partial x}\Big|^2 dx + \frac{C_{J,1}}{4\varepsilon^{3}}\int_{0}^{1}\int_{0}^{1}J^{\varepsilon}(x-y)\left(v^{\varepsilon}(y)-v^{\varepsilon}(x)\right)^2 dydx
      \\[10pt] 
      \nonumber
   & \qquad \qquad \qquad + \frac{C_{J,2}}{2\varepsilon^{3}}\int_{-1}^{0}\int_{0}^{1}J^{\varepsilon}(x-y)\left(u^{\varepsilon}(y)-v^{\varepsilon}(x)\right)^2 dxdy \leq C.
\end{align}
By Lemma \ref{energy.lema}, there exists $k>0$ (independent of $\varepsilon$) such that
\begin{equation}\label{important}
    C \geq \overline{E}(w^{\varepsilon}) \geq k \frac{1}{\varepsilon^{3}}\int_{-1}^{1}\int_{-1}^{1}J^{\varepsilon}(x-y)\left(w^{\varepsilon}(y)-w^{\varepsilon}(x)\right)^2 dy dx.
\end{equation}
It follows that, there exists a subsequence, also denoted $w^{\varepsilon}$, which converges in $L^2(-1,1)$ to a limit $w \in H^1(-1,1)$. 

We have
\begin{equation}\label{liminf1}
    \liminf_{\varepsilon \to 0} \frac{1}{2}\int_{-1}^{0} \Big| \frac{\partial u^{\varepsilon}}{\partial x}\Big|^2
    dx \geq \frac{1}{2}\int_{-1}^{0} \Big| \frac{\partial u}{\partial x}\Big|^2 dx.
\end{equation}

Using the fact that $$\frac{C_{J,2}}{2\varepsilon^{3}}\int_{0}^{1}\int_{0}^{1}J^{\varepsilon}(x-y) (v^{\varepsilon}(y)-v^{\varepsilon}(x))^2 dxdy$$ is bounded, by Theorem 6.11 in \cite{andreu2010nonlocal}, there exists a subsequence, also denoted by $\{v^{\varepsilon}\}$, such that 
\begin{equation*}
    v^{\varepsilon} \to v \quad \text{in} \quad L^2(0,1),
\end{equation*}
and, moreover, the limit $v$ satisfies, $v \in H^{1}(0,1)$ and 
\begin{equation}\label{lemma2.4}
    \left(\frac{C_{J,2}}{4}J(z)\right)^{1/2}\frac{\overline{v}^{\varepsilon}(x+\varepsilon z)-v^{\varepsilon}(x)}{\varepsilon} \rightharpoonup \left(\frac{C_{J,2}}{4}J(z)\right)^{1/2} z \cdot \frac{\partial v}{\partial x},
\end{equation}
weakly in $L^2(0,1)\times L^2(\mathbb{R})$. Then, taking the limit in the equation \eqref{lemma2.4} we have that
\begin{equation}\label{liminf2}
 \liminf_{\varepsilon \to 0}{\frac{C_{J,1}}{4\varepsilon^3}\int_{0}^{1}\int_{0}^{1}J^{\varepsilon}(x-y)\left(v^{\varepsilon}(y)-v^{\varepsilon}(x)\right)^2 dydx} \geq \frac{1}{2}\int_{0}^{1}\Big| \frac{\partial v}{\partial x}\Big|^2 dx.
\end{equation}

Moreover, we have
\begin{equation}\label{liminf3}
 \liminf_{\varepsilon \to 0}{\frac{C_2}{2\varepsilon^3}\int_{-1}^{0}\int_{0}^{1}J^{\varepsilon}(x-y)\left(v^{\varepsilon}(y)-u^{\varepsilon}(0)\right)^2 dxdy} \geq 0.
\end{equation}

Therefore, from \eqref{liminf1}-\eqref{liminf3} we conclude that
\begin{equation*}
   \liminf_{\varepsilon \to 0}{E^{\varepsilon}(w^{\varepsilon})} \geq  \frac{1}{2}\int_{-1}^{0}
   \Big| \frac{\partial u}{\partial x}\Big|^2 dx + \frac{1}{2}\int_{0}^{1} \Big| \frac{\partial v}{\partial x}\Big|^2 dx = E(w).
\end{equation*}

Now let us prove $1)$. Given $w \in H^1(-1,1)$ we choose as the approximating sequence $w_{n} \equiv w$. 
We have,
\begin{align*}
  & E^{\varepsilon}(w)  :=\frac{1}{2}\int_{-1}^{0}\Big| \frac{\partial w}{\partial x}\Big|^2 dx + \frac{C_{J,1}}{4\varepsilon^{3}}\int_{0}^{1}\int_{0}^{1}J^{\varepsilon}(x-y)\left(w(y)-w(x)\right)^2 dydx \\[10pt]
   \nonumber
   & \qquad + \frac{C_{J,2}}{2\varepsilon^{3}}\int_{-1}^{0}\int_{0}^{1}J^{\varepsilon}(x-y)\left(w(0)-w(x)\right)^2 dxdy 
\end{align*}
and we want to show that 
\begin{equation}\label{inequalityE}
    \limsup_{\varepsilon \to 0}{ E^{\varepsilon}(w)} \leq E(w).
\end{equation}

The inequality \eqref{inequalityE} is hold if we can verify that
    \begin{equation}\label{des1}
        \limsup_{\varepsilon \to 0}\left(\frac{C_{J,1}}{4\varepsilon^{3}}\int_{0}^{1}\int_{0}^{1}J^{\varepsilon}(x-y)\left(w(y)-w(x)\right)^2 dydx\right)
        =\frac{1}{2}\int_{0}^{1}\Big| \frac{\partial w}{\partial x}\Big|^2
        dx.
    \end{equation}
    and
     \begin{equation}\label{des2}
        \limsup_{\varepsilon \to 0}\left(\frac{C_{J,2}}{2\varepsilon^{3}}\int_{-1}^{0}\int_{0}^{1}J^{\varepsilon}(x-y)\left(w(0)-w(x)\right)^2 dxdy\right)=0.
    \end{equation}

Let us first show \eqref{des2}. Performing a change of variables and using the Holder's inequality, \eqref{des2} can be written as
\begin{align*}\nonumber
  & \frac{C_{J,2}}{2\varepsilon^{3}}\int_{-1}^{0}\int_{0}^{1}J^{\varepsilon}(x-y)\left(w(0)-w(x)\right)^2 dxdy \\
  & = \frac{C_{J,2}}{2\varepsilon^{2}}\int_{-1}^{0}\int_{\frac{-y}{\varepsilon}}^{\frac{1-y}{\varepsilon}}J(z)\left(w(y+\varepsilon z)-w(0)\right)^2 dzdy \\\nonumber
   & = \frac{C_{J,2}}{2\varepsilon^{2}}\int_{-R\varepsilon}^{0}\int_{\frac{-y}{\varepsilon}}^{\frac{1-y}{\varepsilon}}J(z)\left(w(y+\varepsilon z)-w(0)\right)^2 dzdy \\\nonumber
   & = \frac{C_{J,2}}{2}\int_{-R\varepsilon}^{0}\int_{\frac{-y}{\varepsilon}}^{R}J(z)\left[\int_{0}^{y+\varepsilon z}
   \frac{ \frac{\partial w}{\partial x}(s)}{\varepsilon}ds\right]^2 dzdy \\\nonumber
   & \leq \frac{C_{J,2}}{2}\int_{-R\varepsilon}^{0}\int_{\frac{-y}{\varepsilon}}^{R}J(z)\left[\int_{0}^{y+\varepsilon z}
   \Big|
   \frac{\partial w}{\partial x}(s)\Big|^{2}ds\right] dz\frac{dy}{\varepsilon}.
\end{align*}
Changing variables again and since $\int_{-R}^{R}J(z)dz=1$, we obtain
\begin{align*}\nonumber
& \frac{C_{J,2}}{2}\int_{-R}^{0}\int_{-t}^{R}J(z)\left[\int_{0}^{\varepsilon(t+z)} \Big|\frac{\partial w}{\partial x}\Big|^{2}ds\right] dz dt \\
& \leq \frac{C_{J,2}}{2}\int_{-R}^{0}\int_{-R}^{R}J(z)dz\left[\int_{0}^{2R\varepsilon}\Big|\frac{\partial w}{\partial x}\Big|^{2} ds\right] dt \\ \nonumber
&\leq \frac{C_{J,2}}{2}\int_{-R}^{0} \left[\int_{0}^{2R\varepsilon} \Big|\frac{\partial w}{\partial x}\Big|^{2} ds\right] dt.
\end{align*}
Now, we observe that, as $\frac{\partial w}{\partial x} \in L^2(-1,1)$ then $|\frac{\partial w}{\partial x}|^{2} \in L^1(-1,1)$. Then, 
we have $$\int_{0}^{2R\varepsilon}\Big|\frac{\partial w}{\partial x}\Big|^{2} dz \to 0$$ as $\varepsilon \to 0$, which yields \eqref{des2}.

Now we have left with the task to show that
 \begin{equation*}
        \limsup_{\varepsilon \to 0}\left(\frac{C_{J,1}}{4\varepsilon^{3}}\int_{0}^{1}\int_{0}^{1}J^{\varepsilon}(x-y)\left(w(y)-w(x)\right)^2 dydx\right)=\frac{1}{2}\int_{0}^{1} \Big|\frac{\partial w}{\partial x}\Big|^{2} dx.
    \end{equation*}
Changing variables and using Taylor's expansion it follows that
\small
\begin{align*}
   & \left|\frac{C_{J,1}}{4\varepsilon^{3}}\int_{0}^{1}\int_{0}^{1}J^{\varepsilon}(x-y)\left(w(y)-w(x)\right)^2 dydx\right|
   \\[10pt]
   & =  \left|\frac{C_{J,1}}{4\varepsilon^{2}}\int_{0}^{1}\int_{\frac{-x}{\varepsilon}}^{\frac{1-x}{\varepsilon}}J(z)\left(w(x+\varepsilon z)-w(x)\right)^2 dzdx\right| 
   \\[10pt]
    & \leq \frac{C_{J,1}}{4\varepsilon^{2}}\int_{0}^{1}\int_{\frac{-x}{\varepsilon}}^{\frac{1-x}{\varepsilon}}J(z)\left|w(x+\varepsilon z)-w(x)\right|^2 dzdx 
    \\[10pt]
    & = \frac{C_{J,1}}{4\varepsilon^{2}}\int_{0}^{1}\int_{\frac{-x}{\varepsilon}}^{\frac{1-x}{\varepsilon}}J(z)
    \left| \frac{\partial w}{\partial x}(x)\varepsilon z+\frac{1}{2} \frac{\partial^2 w}{\partial x^2}(\xi)\varepsilon^2 z^2\right|^2 dzdx 
    \\[10pt]
    & \leq \frac{C_{J,1}}{4\varepsilon^{2}}\int_{0}^{1}\int_{\frac{-x}{\varepsilon}}^{\frac{1-x}{\varepsilon}}J(z)
    \left(\left| \frac{\partial w}{\partial x}(x)\varepsilon z \right|+ \frac{1}{2} \left|\frac{\partial^2 w}{\partial x^2}(\xi)\varepsilon^2 z^2\right|
    \right)^2 dzdx.
\end{align*}
\normalsize
Now, using Minkowski's inequality
\begin{align*}
 & \frac{C_{J,1}}{4\varepsilon^{2}}\int_{0}^{1}\int_{\frac{-x}{\varepsilon}}^{\frac{1-x}{\varepsilon}}J(z)
     \left(\left| \frac{\partial w}{\partial x}(x)\varepsilon z \right|+ \frac{1}{2} \left|\frac{\partial^2 w}{\partial x^2}(\xi)\varepsilon^2 z^2\right|
    \right)^2 dzdx \\[10pt]
    & \leq \frac{C_{J,1}}{4\varepsilon^{2}}\int_{0}^{1}\int_{\frac{-x}{\varepsilon}}^{\frac{1-x}{\varepsilon}}J(z)
    \left|\frac{\partial w}{\partial x}(x)\varepsilon z \right|^2 dzdx 
    +\frac{C_{J,1}}{4\varepsilon^{2}}\int_{0}^{1}\int_{\frac{-x}{\varepsilon}}^{\frac{1-x}{\varepsilon}}J(z)
    \left|\frac{1}{2} \frac{\partial^2 w}{\partial x^2}(\xi)\varepsilon^2 z^2\right| dzdx \\[10pt]
    & \leq \frac{C_{J,1}}{4}\int_{0}^{1}\int_{\frac{-x}{\varepsilon}}^{\frac{1-x}{\varepsilon}}J(z)
    \left| \frac{\partial w}{\partial x} (x)\right|^2 |z|^2 dzdx  +  \varepsilon^2 \frac{C_{J,1}}{16}\int_{0}^{1}\int_{\frac{-x}{\varepsilon}}^{\frac{1-x}{\varepsilon}}J(z)
    \left|\frac{\partial^2 w}{\partial x^2}(\xi)\right|^2 |z|^4 dzdx \\[10pt]
    & \leq \frac{C_{J,1}}{4}\int_{0}^{1}\int_{-R}^{R}J(z) \left|\frac{\partial w}{\partial x}(x)\right|^2 |z|^2 dzdx 
    +  \varepsilon^2 \frac{C_{J,1}}{16}\int_{0}^{1}\int_{-R}^{R}J(z)\left|\frac{\partial^2 w}{\partial x^2}(\xi)\right|^2 |z|^4 dzdx \\[10pt]
    & \leq \frac{C_{J,1}}{4}\int_{0}^{1}\int_{\mathbb{R}}J(z) |z|^2 dz \left|\frac{\partial w}{\partial x}(x)\right|^2dx 
     +  \varepsilon^2 \frac{C_{J,1}}{16}\int_{0}^{1}\int_{\mathbb{R}}J(z)\left|\frac{\partial^2 w}{\partial x^2}(\xi)\right|^2 |z|^4 dzdx.
\end{align*}

Since $ \int_{\mathbb{R}}J(z) |z|^2 dz = M(J)$, $\frac{\partial^2 w}{\partial x^2}$ is bounded and $\int_{\mathbb{R}}J(z)|z|^4 dz$ is finite 
we can conclude that
\begin{align*}
  & \limsup_{\varepsilon \to 0}\left(\varepsilon^2 \frac{C_{J,1}}{16}\int_{0}^{1}\int_{\mathbb{R}}J(z)|z|^4 dz\left|\frac{\partial^2 w}{\partial x^2}(\xi)\right|^2 dx\right) = 0, \\[10pt]
 &  \text{and} \\[10pt]
 & \limsup_{\varepsilon \to 0}\left(\frac{C_{J,1}. M(J)}{4}\int_{0}^{1}\left|\frac{\partial w}{\partial x}(x)\right|^2 |z|^2 dx\right) = \frac{1}{2}\int_{0}^{1}
 \Big| \frac{\partial w}{\partial x} \Big|^2dx.
\end{align*}

Finally, we have
\begin{align*}
  &  \limsup_{\varepsilon \to 0}{ E^{\varepsilon}(w)} \\
  & =\limsup_{\varepsilon \to 0} \left(\frac{1}{2}\int_{-1}^{0} \Big|\frac{\partial w}{\partial x}\Big|^2 dx 
  + \frac{C_{J,1}}{4\varepsilon^{3}}\int_{0}^{1}\int_{0}^{1}J^{\varepsilon}(x-y)\left(w(y)-w(x)\right)^2 dydx\right. \\[10pt] 
   & \qquad + \left.\frac{C_{J,2}}{2\varepsilon^{3}}\int_{-1}^{0}\int_{0}^{1}J^{\varepsilon}(x-y)\left(w(0)-w(x)\right)^2 dxdy\right) \\[10pt]
   &= \limsup_{\varepsilon \to 0}\left(\frac{1}{2}\int_{-1}^{0} \Big|\frac{\partial w}{\partial x}\Big|^2  dx \right) \\[10pt]
   & \qquad + \limsup_{\varepsilon \to 0}\left(\frac{C_{J,1}}{4\varepsilon^{3}}\int_{0}^{1}\int_{0}^{1}J^{\varepsilon}(x-y)\left(w(y)-w(x)\right)^2 dydx \right) 
   \\[10pt]
   & \qquad + \limsup_{\varepsilon \to 0}\left(\frac{C_{J,2}}{2\varepsilon^{3}}\int_{-1}^{0}\int_{0}^{1}J^{\varepsilon}(x-y)\left(w(0)-w(x)\right)^2 
   dxdy\right)\\[10pt]
   & \leq \frac{1}{2}\int_{-1}^{0}\Big|\frac{\partial w}{\partial x}\Big|^2  dx + \frac{1}{2}\int_{0}^{1} \Big|\frac{\partial w}{\partial x}\Big|^2 dx \\[10pt]
   & = E(w),
\end{align*}
as we wanted to show.
 \end{proof}

 \begin{remark} {\rm Our convergence result can be also read as: take, as before,
 $w^{\varepsilon} = (u^{\varepsilon}, v^{\varepsilon})$. Then, 
 for any finite $T>0$ we have 
 \begin{equation}
     \lim_{\varepsilon \to 0}\left(\max_{t \in [0,T]}\parallel u^{\varepsilon}(\cdot,t)-u(\cdot,t) \parallel_{L^2(-1,1)}\right)=0
 \end{equation}
 and 
 \begin{equation}
     \lim_{\varepsilon \to 0}\left(\max_{t \in [0,T]}\parallel v^{\varepsilon}(\cdot,t)-v(\cdot,t) \parallel_{L^2(-1,1)}\right)=0.
 \end{equation}
 The limit pair $(u,v)$ is the unique solution to two heat equations
 \begin{align}\label{local.limite.uv}
\begin{cases}
 \displaystyle   \frac{\partial u}{\partial t}(x,t)  =  \frac{\partial^{2}u }{\partial x^{2}}(x,t), \qquad x\in (-1,0), t\in (0,T) ,\\[10pt]
 \displaystyle   \frac{\partial u}{\partial x}(-1,t)  =  0,  
     \end{cases}
    \end{align}
 \begin{align}\label{local.limite.uv2}
\begin{cases}
 \displaystyle \frac{\partial v}{\partial t}(x,t)  =  \frac{\partial^{2}v }{\partial x^{2}}(x,t),   \qquad x\in (0,1), t\in (0,T),\\[10pt]
 \displaystyle   \frac{\partial v}{\partial x}(1,t)  =  0,  
    \end{cases}
    \end{align}
    with coupling 
    $$
    u(0,t) = v(0,t), \qquad
 \displaystyle    \frac{\partial u}{\partial x}(0,t)  =  \frac{\partial v}{\partial x}(0,t)  
    $$
and initial conditions 
$$
 u(x,0)  =u_{0}(x), \qquad v(x,0)  =v_{0}(x).
$$
Notice that the coupling gives continuity and continuity of the derivative of the function
 \begin{align}\label{complete.55}
w(x,t) = 
\begin{cases}
   u(x,t), \quad \text{if} \quad x \in (-1,0) \\
    v(x,t), \quad \text{if} \quad x \in (0,1) 
    \end{cases}
    \end{align}
that therefore turns out to be a solution to 
 \begin{align}\label{localcomplete.909}
\begin{cases}
  \displaystyle  \frac{\partial w}{\partial t}  (x,t)= \frac{\partial^2 w}{\partial x^2}(x,t), \quad x \in (-1,1), \, t>0,\\[10pt]
   \displaystyle \frac{\partial w}{\partial x}(-1,t)=\frac{\partial w}{\partial x}(1,t)=0, \quad t>0,\\[10pt]
    w(x,0)=w_{0}(x), \quad x \in (-1,1).
    \end{cases}
\end{align}
 }
 \end{remark}

\section{Extension to higher dimensions.} \label{sect-higher} 
In this final section, we will briefly describe how our result can be extended
to higher dimensions.
Take $\Omega$, as a bounded smooth domain in $\mathbb{R}^N$ and split it into two subdomains $\Omega_l$ and $\Omega_{nl}$, $\Omega = \Omega_l \cup \Omega_{nl}$. Let us call $\Sigma$, the interface 
between  $\Omega_l$ and $\Omega_{nl}$ inside $\Omega$, that is, 
$$
\Sigma =  \overline{\Omega_l} \cap \overline{\Omega_{nl}} \cap \Omega.
$$
We will assume that $\Omega_l$ has Lipschitz boundary (in order to solve a heat equation with Newman boundary conditions, we need some regularity of the boundary).
We will also assume the following geometric condition on the interface $\Sigma$; foe every $x \in \Omega_l$ and every
$y\in \Omega_{nl}$ with $x-y  \in \mbox{supp} (J) $ there exists a unique $z\in \Sigma$ that belongs to the segment that joins 
$x$ with $y$.  To provide examples, notice that this geometric condition holds if $\Sigma$ is almost flat.
This assumption is useful since, from a probabilistic viewpoint, when a particle wants to jump
from $y \in \Omega_{nl}$ to $x \in \Omega_{l}$ we want that it gets stuck at the interface (and then we want that there exist a unique point on $\Sigma$ that belongs to the segment $[x,y]$, otherwise, some selection principle has to
be assumed and, the selected point on the interface will not depend continuously on $x$ and $y$, in general).
This assumption will be also helpful to make a variable change in the coupling term that appears in our energy (see below).

As before, we split $w\in L^2 (\Omega)$ as $w =u+v$, with $u =w \chi_{\Omega_l}$
and $v = w \chi_{\Omega_{nl}}$. For any $$w=(u,v) \in \mathcal{B}:=\left\{w \in L^2(\Omega): u|_{\Omega_{l}} \in H^1(\Omega_{l}),  
v \in L^2(\Omega_{nl}) \right\}$$ we define the energy
\begin{equation} \label{energy.909}
\begin{array}{l}
    E(u,v)  \displaystyle:=\frac{1}{2}\int_{\Omega_{l}} | \nabla u |^2 dx + \frac{C_{J,1}}{4}\int_{\Omega_{nl}}\int_{\Omega_{nl}}J(x-y)\left(v(y)-v(x)\right)^2 dydx \\[10pt]
    \displaystyle \quad \qquad \qquad + \frac{C_{J,2}}{2}\int_{\Omega_{nl}}\int_{\Sigma}G(x,z)\left(v(x)-u(z)\right)^2 d\sigma(z) dx.
\end{array}
\end{equation}
Remark that in this energy we have 
$$
\int_{\Omega_{nl}}\int_{\Sigma}G(x,z)dy\left(v(x)-u(z)\right)^2 d\sigma(y) dx
$$
as coupling term. This integral comes from the integral
$$
\iint_{A}J(x-y) \left(v(x)-u(z)\right)^2 dy dx
$$
with $A=\{ (x,y) : x \in \Omega_{nl}, y \in \Omega_l, \mbox{ with } z \in \Sigma, z=ax+(1-a)y \}$
(that is $z$ lies in the segment that joins $x$ and $y$) after a change of variables (just take 
$z=ax+(1-a)y$). It is here (in doing this change of variables) that we are using the
geometric condition on $\Sigma$). The kernel $G$ is also nonnegative but is not equal to $J$ (this fact comes
from the change of variables that involves a jacobian $D(x,z)$).

With this energy, the associated evolution problems read as,
 \begin{align}\label{local-N}
\begin{cases}
 \displaystyle   \frac{\partial u}{\partial t}(x,t)  =  \Delta u (x,t),  \\[10pt]
 \displaystyle   \frac{\partial u}{\partial \eta}(z,t)  =  0, \qquad z\in \partial \Omega_l \cap \partial \Omega,  \\[10pt]
 \displaystyle    \frac{\partial u}{\partial \eta}(z,t)  = C_{J,2}\int_{\Omega_{nl}} G(x,z)( v(y,t)-  u(z,t)) dx, \qquad z \in \Sigma, \\[10pt] 
 \displaystyle    u(x,0)  =u_{0}(x).
    \end{cases}
    \end{align}
    for $ x \in \Omega_l $, $t>0$, and
\begin{align}\label{nonlocal-N}
\begin{cases}
 \displaystyle \frac{\partial v}{\partial t}(x,t)  =C_{J,1}\int_{\Omega_{nl}} J(x-y)\left(v(y,t)-v(x,t) \right)dy 
   - C_{J,2} \int_\Sigma G(x,z) ( v(x,t)-  u(z,t)) d\sigma (z) ,  \\[10pt]
     v(x,0)  =v_{0}(x), 
      \end{cases}
     \end{align} 
     for $ x \in \Omega_{nl}$, $t>0$. 
     
     For this problem \eqref{local-N}--\eqref{nonlocal-N}, we can also prove existence and uniqueness 
     following the same steps that we made for the one-dimensional case. In fact, the strategy
     of building a solution as a fixed point of the composition of the maps that solves the 
     problem for $u$ (given $v$) and for $v$ (fixing $u$) also works here. Remark that we
     obtain a solution $u(x,t) $ that is in $H^1 (\Omega_{l})$ for $t>0$ and hence $u(z,t)$
     is defined on $\Sigma$ in the sense of traces (and belongs to $L^2 (\Sigma)$ for $t>0$).
     The more abstract approach using semigroup theory also works here. Consider the operator
\begin{align*}
 \displaystyle 
B_{J}(u,v) =
\begin{cases} 
 - \Delta u \qquad  \text{for} \quad x \in \Omega_l,\\[10pt]
\displaystyle  -\frac{C_{J,1}}{2}\int_{\Omega_{nl}}J(x-y)(u(y)-u(x))dy+C_{J,2}\int_{\Sigma}G(x,z)(v(x)-u(z))d\sigma (z) 
\qquad \text{for} \quad x \in \Omega_{nl},
\end{cases}
\end{align*}
with domain $$
\begin{array}{l} \displaystyle
D(B_{J}):=  \Big \{ (u,v) : u \in H^2(\Omega_{l}, v\in L^2(\Omega),
\mbox{ with } \frac{\partial u}{\partial \eta}(z) = 0 \mbox{ on }\partial \Omega \cap \partial \Omega_l  \\[10pt]
\displaystyle \qquad \qquad \qquad \qquad 
\mbox{ and } \frac{\partial u}{\partial \eta}(z) = -C_{J,2}\int_{\Omega_{nl} }G(x,z)(v(x)-u(z))dx \mbox{ on  } \Sigma,
\Big\}
\end{array}
$$
and proceed as we did previously.

     The total mass is preserved. In fact, we have
 \begin{align*}
\displaystyle 
 \frac{\partial}{\partial {t}}\left(\int_{\Omega}w(x,t)dx\right) &
 \displaystyle = \int_{\Omega_l} \Delta u(x,t)dx+C_{J,1}\int_{\Omega_{nl}}\int_{\Omega_{nl}}J(x-y)(v(y,t)-v(x,t))dydx \\[6pt]
& \qquad -C_{J,2}\int_{\Omega_{nl}}\int_{\Sigma }G(x,z)(v(x,t)- u(z,t) d\sigma (z) dx \\[6pt]
& = \int_{\partial \Omega_l} \frac{\partial u}{\partial \eta}(x,t)dx  -C_{J,2}\int_{\Omega_{nl}}\int_{\Sigma }G(x,z)(v(x,t)- u(z,t) d\sigma (z) dx \\[6pt]
& = 0.
\end{align*}

     The key control of the nonlocal energy,
     \begin{equation} \label{energycontrol-N}
\begin{array}{l}
\displaystyle
 \frac{1}{2} \int_{\Omega_l}| \nabla u|^2 dx + \frac{C_{J,1}}{4}\int_{\Omega_{nl}}\int_{\Omega_{nl}}J(x-y)\left(v(y)-v(x)\right)^2 dydx 
  + \frac{C_{J,2}}{2}\int_{\Omega_{nl}}\int_{\Sigma}J(x,z)\left(v(x)-u(z)\right)^2 d\sigma (z) dx  \\[10pt]
  \displaystyle \qquad  \geq k \int_{\Omega}\int_{\Omega}J(x-y)\left(w(y)-w(x)\right)^2 dydx.
\end{array}
\end{equation}
can be proved, as before, arguing by contradiction. 

With the key inequality \eqref{energycontrol-N}, we can show that solutions converge to the mean value of the initial condition, as $t \to \infty$ with an exponential rate.
\begin{equation}
    \left\| w(\cdot ,t) - \fint w_0 \right\|_{L^2(-1,1)} \leq C e^{-\beta_{1} t}, \qquad t>0.
\end{equation}
In fact, we have that
\begin{align}\label{eigenvalue-N}
\displaystyle 
    0<\beta_{1}=\inf_{w : \int_{\Omega} w=0}\frac{
    \displaystyle E(w)}{\displaystyle  \int_{\Omega}(w(x))^2 dx}
\end{align}
is strictly positive. This fact can be proved by contradictions as we did before, but it also follows from
\eqref{energycontrol-N} and the results in \cite{andreu2010nonlocal} since we have
$$
\displaystyle 
   \beta_{1}=\inf_{w : \int_{\Omega} w=0}\frac{
    \displaystyle E(w)}{\displaystyle  \int_{\Omega}(w(x))^2 dx} \geq
    \inf_{w : \int_{\Omega} w=0}\frac{
    \displaystyle k \int_{\Omega}\int_{\Omega}J(x-y)\left(w(y)-w(x)\right)^2 dydx}{\displaystyle  \int_{\Omega}(w(x))^2 dx} >0.
$$
 
 The approximation of the heat equation with Neumann boundary conditions under rescales of the kernel is left open. We
 believe that the result holds with extra assumptions on the coupling kernel $G$.

\medskip

\noindent{\large \textbf{Acknowledgments}}

BCS was financed by the Coordenação de Aperfeiçoamento de Pessoal de Nível Superior - Brasil (Capes) - No 88887369814/2019-00.

JDR is partially supported by CONICET grant PIP GI No 11220150100036CO
(Argentina), by  UBACyT grant 20020160100155BA (Argentina) and by the Spanish project MTM2015-70227-P.


{\bf addresses}

B. C. dos Santos and S. Oliva. \\
IME-USP \\
  Institute of Mathematics and Statistics\\
  University of São Paulo, Brazil \\

J. D. Rossi \\ 
 Department of Mathematics, FCEyN\\
  University of Buenos Aires, Argentina

\end{document}